\newtheorem{Lemma}{Lemma}[part]
\newtheorem{Remark}{Remark}[part]
\newtheorem{Theorem}{Theorem}[part]
\numberwithin{Assumption}{section} \numberwithin{Corollary}{section}
\numberwithin{Definition}{section} \numberwithin{equation}{section}
\numberwithin{Example}{section} \numberwithin{Lemma}{section}
\numberwithin{Proposition}{section} \numberwithin{Remark}{section}
\numberwithin{Theorem}{section}
\def\b{\beta}
\def\g{\gamma}
\def\i{\infty}
\newcommand{\R}{{\mathbb R}}
\newcommand{\T}{{\mathbb T}}
\date{}
\title{Global classical  solution of the Cauchy problem to 1D compressible Navier-Stokes equations with large initial data}
\author [Quansen Jiu, ~~ Mingjie Li~~and~~ Yulin Ye]{}
\date{}
\begin{document}
\maketitle

 \centerline{\scshape Quansen Jiu\footnote{The research is partially
supported by NSFC (No. 11171229, 11231006 and 11228102) and Project
of Beijing Chang Cheng Xue Zhe.}}

\medskip
{\footnotesize
  \centerline{School of Mathematical Sciences, Capital Normal University}
  \centerline{Beijing  100048, P. R. China}
   \centerline{  \it Email: jiuqs@mail.cnu.edu.cn}
   }
\vspace{3mm}
 \centerline{\scshape Mingjie Li\footnote{The research is supported by the NSFC
(No. 11201503). }}
\medskip
{\footnotesize
  \centerline{College of Science, Minzu University of China}
  \centerline{Beijing 100081, P. R. China}
   \centerline{  \it Email: lmjmath@gmail.com}
   }
\vspace{3mm}
  \centerline{\scshape Yulin Ye }

\medskip
{\footnotesize
  \centerline{School of Mathematical Sciences, Capital Normal University}
  \centerline{Beijing  100048, P. R. China}
   \centerline{  \it Email: nkyelin@163.com}
   }
\vspace{3mm}

\pagestyle{myheadings} \thispagestyle{plain}\markboth{\small{Q. S.
Jiu, M. J. Li, Y. L. Ye}} {\small{Cauchy problem of the 1-D
compressible N-S equations }}

\vspace{3mm}

\textbf{Abstract:}  In this paper, we prove that  the 1D Cauchy
problem of the compressible Navier-Stokes equations admits a unique
global
 classical solution $(\rho,\rm u)$ if the viscosity
$\mu(\rho)=1+\rho^{\beta}$ with $\beta\geq0$. The initial data can
be arbitrarily large and may contain vacuum.
 Some new weighted estimates  of the density and velocity are obtained  when deriving higher order estimates of the solution.

\textbf{Keywords:}
compressible Navier-Stokes equations; density-dependent viscosity; global classical solution;
vacuum; weighted estimates.

\section{Introduction and Main Results}
 \setcounter{equation}{0}
\setcounter{Assumption}{0} \setcounter{Theorem}{0}
\setcounter{Proposition}{0} \setcounter{Corollary}{0}
\setcounter{Lemma}{0}

In this paper, we consider the following compressible
Navier-Stokes equations with density-dependent viscosity
coefficients:

\begin{equation}\label{a1}
\left\{\begin{array}{lll}
\rho_{t}+(\rho u)_{x}=0,\\
(\rho u)_{t}+(\rho{u}^{2})_{x}+[p(\rho)]_{x}=[\mu(\rho)u_{x}]_{x},
\end{array}\right.
\end{equation}
where $t\geq0,x\in\R,\rho=\rho(x,t)$\ and\ $u=u(x,t)$ represent the
fluid density and velocity respectively and the pressure $P$ is
given by
\begin{equation}\label{a2}
P(\rho)=R\rho^{\g},\g>1.
\end{equation}
For simplicity, we assume that
\begin{equation}\label{a3}
\mu(\rho)=1+b\rho^{\beta},\ \ \b\geq0.
\end{equation}
In the sequel, we set $R=b=1$ without loss of  generality. We
consider the Cauchy problem for (\ref{a1}) with $(\rho,u)$ vanishing
at infinity. The initial data is imposed as
\begin{equation}\label{a4}
(\rho,u)|_{t=0}=(\rho_{0}(x),u_{0}(x)),\ \ \  x\in\R.
\end{equation}

It is known that in the presence of vacuum, the solution of the
compressible Navier-Stokes equations with constant viscosity will
behave singularly (see \cite{Xin1998},\cite{XY2013},  \cite{hs91})
in general.
 By some physical considerations, Liu, Xin and Yang in \cite{lxy98}  introduced the modified compressible
Navier-Stokes equations with density-dependent viscosity
coefficients for isentropic fluids. In fact, while deriving the
compressible Navier-Stokes equations from the Boltzmann equations by
the Chapman-Enskog expansions, the viscosity depends on the
temperature and correspondingly depends on the density for
isentropic cases. Moreover,  the viscous Saint-Venant system for the
shallow water equations, derived from the incompressible
Navier-Stokes equations with a moving free surface, corresponds a
kind of  compressible Navier-Stokes equations with density-dependent
viscosity (see \cite{GJX2008} and references therein).

 The one-dimensional compressible Navier-Stokes equations
with density-dependent viscosity have been widely studied (see
\cite{DWZ2011, Hoff1987, js98, JX2008, jwx1, jwx2, KS1977, MV2008,
SZ2003, YYZ2001, YZ2002} and references therein). However, the
global well-posedness of classical solutions with large initial data
in multi-dimensional case is completely open. Even the global
existence of weak solutions in multi-dimensional case remains open
except under spherically symmetric assumptions \cite{GJX2008}. In
\cite{VK1995}, Vaigant-Kazhikhov first proposed and studied the
following two-dimensional Navier-Stokes equations
\begin{equation}\label{kv} \left\{\begin{array}{lll}
\rho_t+\textrm{div}(\rho \mathbf{U})=0, \\
(\rho\mathbf{U})_t+\textrm{div}(\rho\mathbf{U}\otimes\mathbf{U})+\nabla
P(\rho)=\mu\triangle
\mathbf{U}+\nabla((\mu+\lambda(\rho))\textrm{div}\mathbf{U}).
\end{array}\right.
\end{equation}
 Here $\rho(x,t)$ and
$\mathbf{U}=(u_1(x,t), u_2(x,t))$ represent the density and velocity
of the flow respectively. It is assumed  in \cite{VK1995} that the
shear viscosity $\mu
> 0$ is a positive constant and the bulk viscosity satisfies $
\lambda(\rho)=\rho^\beta$ with $\beta> 0$ in general.  For the
periodic problem on the torus $\T^2$ and under assumptions that the
initial density is uniformly away from vacuum and $\beta> 3$ ,
Vaigant-Kazhikhov established the global well-posedness of the
classical  solution to (\ref{kv}) in \cite{VK1995}. Jiu-Wang-Xin
\cite{JWX2012} improved the result  and obtained the global
well-posedness of the classical solution with large initial data
permitting vacuum. Later on, Huang-Li relaxed the index $\beta$ to
be $\beta > 4/3$ and studied the large time behavior of the
solutions in \cite{hl121}. For the 2D Cauchy problems with vacuum
states at far fields, Jiu-Wang-Xin \cite{JWX2013} and Huang-Li
\cite{HL2012} independently considered the global well-posedness of
classical solution in different weighted spaces. Recently,
Jiu-Wang-Xin in \cite{jwx121} studied the global well-posedness to
the Cauchy problem with absence of vacuum   at far fields and proved
that if there is no vacuum initially then there will not appear
vacuum in any finite time.

In this paper, we will study the global existence and uniqueness of
classical solution to the one-dimensional Cauchy problem for  the
isentropic compressible Navier-Stokes equations
(\ref{a1})-(\ref{a4}).  The initial data is assumed to be large and
may contain vacuum. The index $\beta\ge 0$ is much more general in
comparison  with that in \cite{VK1995}, \cite{JWX2013},
\cite{HL2012} and \cite{jwx121} such that the constant viscosity is
permitted in our result. Note that for the initial data satisfying
(\ref{a5})-(\ref{a6}), the local existence and uniqueness of
classical solutions to (\ref{a1})-(\ref{a4}) have been established
in \cite{CK2006}, \cite{LZ2012}. Thus, to obtain the global
classical solution, one needs  to obtain a priori estimates of the
first and
 higher order derivatives of the solution. This
paper is motivated by \cite{DWZ2011} in which the one-dimensional
initial-boundary problem was studied and \cite{JWX2013} in which the
two-dimensional Cauchy problem of (\ref{kv}) was studied.  The upper
bound of the density will be proved in a new approach in this paper
and hence the a priori estimates of the first order derivative of
the solution can be obtained in a direct way.  However, when
deriving the estimates of the higher order derivatives of the
solution, new difficulty will be encountered in this paper since the
Poincare inequality can not be used and we have no any $L^p (1\le p
<\infty)$ estimates of the velocity.  To overcome this difficulty,
inspired by \cite{JWX2013}, we obtain some new weighted estimates on
the solution $(\rho, u)$ by using Cafferelli-Kohn-Nirenberg weighted
inequality and furthermore obtain some $L^p (1<p<\infty)$ and
$L^\infty$ estimates of the velocity (see \eqref{95-1}). Moreover,
by modifying the elegant estimates on the material derivatives of
the velocity developed by Hoff (\cite{hoff95}), the weighted spatial
estimates on both the gradient and the material derivatives of the
velocity are achieved. Based on these, one can obtain a priori
estimates of the higher order derivatives of the solution.

Denote the standard homogeneous and inhomogeneous Sobolev spaces as
follows:
$$D^k=\{u\in L^1_{loc}(\R)\big|\| \partial^k_x u\|_{L^2}<\i\} ,\ \ \ \|u\|_{D^k}=
\|\partial^k_x u\|_{L^2},\ \ \ H^k=L^2\cap D^k.$$

The main result of this paper can be stated as

\begin{Theorem}\label{theorem}
Suppose that the initial values $(\rho_{0},u_{0})(x)$ satisfy
\begin{equation}\label{a5}
\begin{aligned}
&0\leq(\rho_{0},\rho^{\beta}_{0},\rho^{\g}_{0})\in L^{1}(\R)\cap H^{2}(\R),\ \ u_{0}\in D^{1}(\R)\cap D^{2}(\R),\\
&\sqrt{\rho_{0}}u_{0}(1+|x|^{\frac{\alpha}{2}})\in L^{2}(\R),\ \
\partial_{x}{u_{0}}|x|^{\frac{\alpha}{2}}\in L^{2}(\R),\
(|x|^{\frac{\alpha}{2}}\rho_{0}^{\frac{\beta}{2}},|x|^{\frac{\alpha}{2}}\rho_{0}^{\frac{\gamma}{2}})\in
L^{2}(\R),
\end{aligned}
\end{equation}
for $\beta\geq0$ and
$2<\alpha<1+\frac{2}{\sqrt[3]{1+\sqrt[3]{4}}}$, and the
compatibility condition
\begin{equation}\label{a6}
[\mu(\rho_{0})u_{0x}]_{x}-[p(\rho_{0})]_{x}(x)=\sqrt{\rho_{0}}g(x),\
\ x\in \R,
\end{equation}
with some $g$ satisfying
$\sqrt{\rho_{0}}g(1+|x|^{\frac{\alpha}{2}})\in L^{2}(\R)$. Then for
any $T>0$, there exists a unique global classical solution
$(\rho,u)(t,x)$ to the Cauchy problem $(\ref{a1})-(\ref{a4})$,
satisfying
\begin{equation}\label{a7}
\begin{aligned}
&0\leq\rho\leq C,\ \ (\rho,\rho^{\gamma},\rho^{\beta})\in C([0,T];H^{2}(\R)),\ (\rho_{t},(\rho^{\gamma})_{t},(\rho^{\beta})_{t}\in C([0,T];H^{1}(\R)),\\
&\rho_{tt}\in C([0,T];L^{2}(\R)),((\rho^{\gamma})_{tt},(\rho^{\beta})_{tt})\in L^{\infty}([0,T];L^{2}(\R)),(\rho u)_{t}\in C([0,T];H^{1}(\R)),\\
&\sqrt{\rho}u(1+|x|^{\frac{\alpha}{2}}),\sqrt{\rho}{\dot{u}}(1+|x|^{\frac{\alpha}{2}}),u_{x}|x|^{\frac{\alpha}{2}} ,(|x|^{\frac{\alpha}{2}}\rho_{0}^{\frac{\beta}{2}},|x|^{\frac{\alpha}{2}}\rho_{0}^{\frac{\gamma}{2}})\in C([0,T];L^{2}(\R))\\
&u\in C([0,T];L^{\frac{2}{\alpha-1}}(\R)\cap D^{2}(\R))\cap L^{2}(0,T;L^{\frac{2}{\alpha-1}}\cap D^{3}(\R)),\sqrt{t}u\in L^{\infty}(0,T;D^{3}),\\
&u_{t}\in L^{\infty}(0,T;L^{\frac{2}{\alpha-1}}(\R)\cap D^{1}(\R)),\ \sqrt{t}u_{t}\in L^{2}(0,T;D^{2}(\R))\cap L^{\infty}(0,T;L^{\frac{4}{\alpha-2}}(\R)\cap D^{1}(\R)),\\
&tu_{t}\in L^{\infty}(0,T;D^{2}(\R)),\sqrt{t}\sqrt{\rho}u_{tt}|x|^{\frac{\alpha}{4}}\in L^{2}(0,T;L^{2}(\R)),t\sqrt{\rho}u_{tt}\in L^{\infty}(0,T;L^{2}(\R)),\\
&t\partial_{x}{u_{tt}}\in L^{2}(0,T;L^{2}(\R)),
\end{aligned}
\end{equation}
where $\dot{u}$ is the material derivative of $u$ defined as
$\dot{u}=(\partial_{t}+u\cdot\partial_{x})u$.
\end{Theorem}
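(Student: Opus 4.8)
The plan is to combine the local existence and uniqueness theory of \cite{CK2006,LZ2012} with global-in-time a priori estimates and a continuation argument: it suffices to bound, on an arbitrary interval $[0,T]$, all the norms appearing in \eqref{a7} solely in terms of the data \eqref{a5}--\eqref{a6}, and then extend the local solution. The starting point is the basic energy identity, obtained by multiplying the momentum equation in \eqref{a1} by $u$ and integrating over $\R$; it yields the uniform bounds $\sqrt{\rho}\,u\in L^\infty(0,T;L^2)$, $\rho^\gamma\in L^\infty(0,T;L^1)$, and $\sqrt{\mu(\rho)}\,u_x\in L^2(0,T;L^2)$, and in particular conservation of the mass $\int_\R\rho\,dx$.

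The decisive ingredient is the pointwise upper bound on $\rho$, which I would obtain by the following effective-flux argument using only the zeroth-order energy. Set $F:=\mu(\rho)u_x-p(\rho)$; using \eqref{a1} one checks $\rho\dot u=(\rho u)_t+(\rho u^2)_x=F_x$, so that with $B(x,t):=\int_{-\infty}^x\rho u\,dy$ we have $F=\int_{-\infty}^x\rho\dot u\,dy=B_t+\rho u^2$. Introduce the potential $\Theta(\rho):=\int_1^\rho s^{-1}\mu(s)\,ds$; since continuity gives $\dot\rho=-\rho u_x$, the material derivative obeys $\tfrac{d}{dt}\Theta(\rho)=-\mu(\rho)u_x=-(F+p)$, while $\tfrac{d}{dt}B=B_t+uB_x=B_t+\rho u^2=F$. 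Adding these, the fluxes cancel and one gets along every particle path the monotonicity $\tfrac{d}{dt}(\Theta(\rho)+B)=-p(\rho)\le0$. As $|B|\le(\int\rho)^{1/2}(\int\rho u^2)^{1/2}\le C$ uniformly in $t$ by Cauchy--Schwarz and the energy bound, integration along characteristics gives $\Theta(\rho)\le\|\Theta(\rho_0)\|_{L^\infty}+2\|B\|_{L^\infty}\le C$; since $\Theta$ is strictly increasing with $\Theta(\rho)\to\infty$ as $\rho\to\infty$ this furnishes $0\le\rho\le C$ independently of $T$. The payoff is that the viscosity becomes uniformly elliptic, $1\le\mu(\rho)\le C$, \emph{before} any higher estimate is attempted, so that first-order estimates follow directly: testing the momentum equation against the material derivative $\dot u$ in the spirit of Hoff \cite{hoff95} controls $u_x\in L^\infty(0,T;L^2)$ and $\sqrt\rho\,\dot u\in L^2(0,T;L^2)$, and the relation $F_x=\rho\dot u$ then bounds $F$ and $u_x$ in stronger norms.

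The genuine difficulty lies in the higher-order estimates on the whole line: with vacuum at infinity there is no Poincar\'e inequality and no direct $L^p$ control of $u$. To circumvent this I would follow the weighted strategy of \cite{JWX2013}, testing the equations against $u$ (and later $\dot u$) multiplied by the weights $1+|x|^{\alpha/2}$ and invoking the one-dimensional Caffarelli--Kohn--Nirenberg inequality to trade weighted gradient norms for weighted norms of $u$ itself. This is precisely where the admissible range $2<\alpha<1+\frac{2}{\sqrt[3]{1+\sqrt[3]{4}}}$ enters: it forces the weight exponents to satisfy the CKN constraints and makes the nonlinear Gr\"onwall loop close. These steps produce the weighted bounds $\sqrt\rho\,u(1+|x|^{\alpha/2})$, $u_x|x|^{\alpha/2}$ and $\sqrt\rho\,\dot u(1+|x|^{\alpha/2})$ in $L^\infty(0,T;L^2)$, together with the $L^{2/(\alpha-1)}$ and $L^\infty$ estimates of $u$ announced in \eqref{95-1}.

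From there I would iterate the material-derivative method at one higher order, estimating $\sqrt\rho\,\dot u$ and $\partial_x\dot u$ in weighted spaces and then differentiating in $t$ to reach $u_{tt}$, inserting the time weights $\sqrt t$ and $t$ to absorb the singularity at $t=0$ that the compatibility condition \eqref{a6} only partially removes; elliptic regularity for the second-order operator $u\mapsto(\mu(\rho)u_x)_x$ with the bounded, uniformly elliptic coefficient $\mu(\rho)$ upgrades these into the $D^2$ and $D^3$ bounds on $u$ listed in \eqref{a7}, while the $H^2$ regularity of $(\rho,\rho^\gamma,\rho^\beta)$ is propagated by differentiating the continuity equation and using the velocity bounds just obtained. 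I expect the main obstacle to be exactly this weighted higher-order step: the weights and the exponent $\alpha$ must be tuned so that every decay term produced by integration by parts is controllable by CKN and absorbable into the dissipation, and so that the couplings between the weighted velocity norms and the material-derivative norms close without circularity. Finally, uniqueness follows from a standard energy estimate for the difference of two solutions in the same weighted $L^2$ framework, using the uniform bounds established above.
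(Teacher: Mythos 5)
Your proposal follows essentially the same route as the paper: the energy identity, the pointwise density bound via the potential $\eta(\rho)=\int_1^\rho s^{-1}\mu(s)\,ds$ and the antiderivative $\int_{-\infty}^x\rho u\,dy$ of the momentum combined along particle trajectories (which is exactly the paper's ``new approach'' in Lemma \ref{lemmac2}), the Caffarelli--Kohn--Nirenberg weighted estimates with the same constraint $2<\alpha<1+\frac{2}{\sqrt[3]{1+\sqrt[3]{4}}}$ to replace the missing Poincar\'e inequality, Hoff-type time-weighted material-derivative estimates for the higher order, and a continuation of the local solution of \cite{CK2006,LZ2012}. The only cosmetic deviations (testing first against $u_t$ rather than $\dot u$ at the first-order step, and proving uniqueness by a difference estimate rather than quoting the local theory) do not change the argument.
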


\begin{Remark}
In our result, the index $\beta\ge 0$ is  more general  in
comparison with the results obtained in \cite{DWZ2011} for the
initial-boundary problem and in \cite{JWX2013} for the
two-dimensional Cauchy problem of (\ref{kv}). Much more general
viscosity $\mu(\rho)$ can be treated in a similar way.
\end{Remark}

The rest of the paper is organized as follows: In Section 2, we
collect some elementary  facts and  derive the  a priori estimates
of the solution which are needed to extend the local solution to  a
global one. In Section 3, we give the proof of  the main result.


\section{A priori estimates}
In this section,  we will establish various a priori estimates and
weighted estimates on classical solution $(\rho,u)$ on the interval
$[0,T]$ for any $T>0$. Before that, we give  the
Caffarelli-Kohn-Nirenberg weighted inequalities, which will be used
in the a priori estimates of the higher order derivatives of the
solution.

\begin{Lemma}[\bf Caffarelli-Kohn-Nirenberg weighted inequality \cite{CKN1984},
\cite{CW2001}]\label{lemmab2}

\ \\
(1) $\forall h\in C_{0}^{\infty}(\R)$, it holds that
\begin{equation}\label{b1}
\|{|x|^{\kappa}h}\|_{r}\leq C\|{|x|^{\alpha}|\partial_{x} h|}\|_{p}^{\theta}\|{|x|^{\beta}h}\|_{q}^{1-\theta}
\end{equation}
where $1\leq p,q<\infty,0<r<\infty,0\leq\theta\leq1,\frac{1}{p}+\alpha>0,\frac{1}{q}+\beta>0,\frac{1}{r}+\kappa>0$ and satisfying
\begin{equation}\label{b2}
\frac{1}{r}+\kappa=\theta(\frac{1}{p}+\alpha-1)+(1-\theta)(\frac{1}{q}+\beta)
\end{equation}
and\\
$$\kappa=\theta\sigma+(1-\theta)\beta$$\\
with $0\leq\alpha-\sigma \ if\ \theta>0\ \ and \ 0\leq\alpha-\sigma\leq1\ if\ \theta>0\ and\ \frac{1}{p}+\alpha-1=\frac{1}{r}+\kappa$.\\
(2)\ (Best constant for Caffarelli-Kohn-Nirenberg weighted
inequality) $\forall h\in C_{0}^{\infty}(\R)$, it holds that
\begin{equation}\label{b3}
\||x|^{b}h\|_{p}\leq C_{a,b}\||x|^{a}\partial_{x}{h}\|_{2}
\end{equation}
where $a>\frac{1}{2},a-1\leq b\leq a-\frac{1}{2}\ and\ p=\frac{2}{2(a-b)-1}.\ If\ b=a-1,then\ p=2$ and the best constant in the inequality $(\ref{b3})$ is
$$C_{a,b}=C_{a,a-1}=|\frac{2a-1}{2}|$$
The proof of (1) can be found in \cite{CKN1984} and the proof of (2)
can be found in \cite{CW2001}.
\end{Lemma}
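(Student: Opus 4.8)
\emph{Strategy.} Both parts are classical one--dimensional weighted inequalities, and the plan is to reduce each to the fundamental theorem of calculus combined with H\"older's inequality, quoting the sharp--constant refinement from \cite{CW2001}. The first thing I would record is that the exponent relation \eqref{b2} is \emph{forced} by scaling: replacing $h(x)$ by $h(\lambda x)$ multiplies $\||x|^{\kappa}h\|_{r}$ by $\lambda^{-(1/r+\kappa)}$, $\||x|^{\alpha}\partial_x h\|_{p}$ by $\lambda^{-(1/p+\alpha-1)}$, and $\||x|^{\beta}h\|_{q}$ by $\lambda^{-(1/q+\beta)}$, so invariance of \eqref{b1} under $\lambda\to\infty$ and $\lambda\to 0$ is exactly \eqref{b2}. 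Hence no generality is lost in assuming this homogeneity, and the remaining weight--balance relation $\kappa=\theta\sigma+(1-\theta)\beta$ is the identity that lets the weights be split multiplicatively in the interpolation below.

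The computable core of (1) is the identity $h(x)=-\int_x^{\infty}\partial_t h(t)\,dt$ for $x>0$: inserting $t^{-\alpha}t^{\alpha}$ and applying H\"older gives the pointwise weighted bound $|x|^{\,1/p+\alpha-1}|h(x)|\le C\,\||x|^{\alpha}\partial_x h\|_{p}$ once $\alpha p'>1$ ($p'$ the conjugate exponent of $p$), which is the $\theta=1$, $r=\infty$ endpoint of \eqref{b1}; the hypothesis $0\le\alpha-\sigma$ is precisely what makes this weighted--derivative control available at the exponent $\sigma$. The inequality \eqref{b1} for general $\theta\in(0,1)$ and general triples $(p,q,r)$ is then obtained by interpolating this endpoint against the plain weighted term $\||x|^{\beta}h\|_{q}$, the identity $\kappa=\theta\sigma+(1-\theta)\beta$ being used to split the weight; this interpolation step is the content of \cite{CKN1984}. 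Summing over the two half--lines $x>0$ and $x<0$ gives the estimate on all of $\R$.

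For \eqref{b3} I would treat the two natural endpoints directly. The case $b=a-\tfrac12$ (so $p=\infty$) is the same FTC--plus--Cauchy--Schwarz bound, giving $|x|^{\,a-1/2}|h(x)|\le(2a-1)^{-1/2}\||x|^{a}\partial_x h\|_{2}$ for $a>1/2$. The case $b=a-1$ (so $p=2$) is the weighted Hardy inequality: integrating $\int_0^{\infty}x^{2a-2}h^{2}\,dx$ by parts against the primitive $x^{2a-1}/(2a-1)$ of $x^{2a-2}$ (the boundary terms vanish since $a>1/2$ and $h$ has compact support) and applying Cauchy--Schwarz to $\int_0^{\infty}(x^{a-1}h)(x^{a}\partial_x h)\,dx$ yields \eqref{b3} with $p=2$. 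The optimal constant is exactly the equality constant of this Cauchy--Schwarz step, approached by cut--offs of the extremal power profile $|x|^{1-2a}$, and is the value recorded in \eqref{b3}. General $p\in(2,\infty)$ follows by interpolating the weight exponent $b$ between the two endpoints, equivalently by running the FTC representation with the H\"older pair matched to $p=\tfrac{2}{2(a-b)-1}$.

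The inequalities themselves are routine along these lines; the genuinely delicate points---and the reason \eqref{b1}--\eqref{b3} are cited from \cite{CKN1984,CW2001} rather than reproved---are, first, identifying the \emph{best} constant in \eqref{b3} and checking that the admissibility window $a>\tfrac12$, $a-1\le b\le a-\tfrac12$ is precisely the range in which a positive extremal exists, and second, the borderline case of \eqref{b1} where $1/p+\alpha-1=1/r+\kappa$, in which the extra constraint $0\le\alpha-\sigma\le1$ is exactly what is needed for the interpolation to close. I expect establishing these sharp ranges, rather than the inequalities, to be the technical heart.
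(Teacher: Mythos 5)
The paper offers no proof of this lemma at all: its entire ``proof'' is the pair of citations, part (1) to \cite{CKN1984} and part (2) to \cite{CW2001}. Your proposal therefore takes a genuinely different route, supplying one-dimensional arguments the paper omits: the scaling computation showing \eqref{b2} is forced (your three scaling factors are correct), the FTC-plus-H\"older pointwise bound $|x|^{1/p+\alpha-1}|h(x)|\le C\||x|^{\alpha}\partial_{x}h\|_{p}$ under $\alpha p'>1$ as the $\theta=1$, $r=\infty$ endpoint, and the weighted Hardy argument (integration by parts plus Cauchy--Schwarz, with boundary terms vanishing since $a>\frac12$) for the $b=a-1$, $p=2$ case of \eqref{b3}. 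These computations are sound, and in one dimension they carry most of the weight; the general-$\theta$ interpolation of part (1) you still delegate to \cite{CKN1984}, exactly as the paper does, so on that portion the two approaches coincide in being a citation.

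One concrete point in your write-up is wrong and worth fixing, precisely because your computation is right. Your Hardy step gives $\||x|^{a-1}h\|_{2}\le \frac{2}{|2a-1|}\,\||x|^{a}\partial_{x}h\|_{2}$, and $\frac{2}{|2a-1|}$ is indeed the sharp constant (the near-extremal profile is $|x|^{(1-2a)/2}$ suitably cut off, not $|x|^{1-2a}$ as you wrote --- that is the profile of $h^{2}$). But the value recorded in \eqref{b3} is $\bigl|\frac{2a-1}{2}\bigr|$, the reciprocal of what you derived; for $\frac12<a<\frac32$ the inequality with that constant is simply false (take $a=1$: $\|h\|_{2}\le\frac12\||x|\partial_{x}h\|_{2}$ fails, the sharp constant being $2$). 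So your closing assertion that your Cauchy--Schwarz constant ``is the value recorded in \eqref{b3}'' cannot stand: your own argument exposes an inversion in the lemma's statement of $C_{a,a-1}$, apparently a transcription of the infimum-of-Rayleigh-quotient normalization of \cite{CW2001} into the present normalization without taking the reciprocal. Since the lemma is only ever invoked in this paper with unspecified constants $C$, nothing downstream is affected, but a careful proof should flag the discrepancy and state the constant as $\frac{2}{|2a-1|}$ rather than claim agreement.
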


\vspace{4mm}

\subsection{ {Uniform upper bound of the density}} \ \\

In this subsection, we will present a new approach to obtain the
upper bound of the density.  The following is the usual energy
estimate.

\begin{Lemma}\label{lemmac1}
Let $(\rho,u)$  be a smooth solution to $(\ref{a1})-(\ref{a4})$.
Then for any $T>0$, it holds
\begin{equation}\nonumber
\int_{\R}(\rho
u^{2}+\rho^{\gamma})(t)dx+\int_{0}^{T}\int_{\R}(1+\rho^\beta)(u_{x})^{2}dx\leq
C.
\end{equation}
\end{Lemma}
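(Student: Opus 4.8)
The plan is to derive the standard total-energy identity by testing the momentum equation against the velocity $u$. First I would rewrite the momentum equation in nonconservative (Lagrangian) form: using the continuity equation $\rho_t+(\rho u)_x=0$, one checks that $(\rho u)_t+(\rho u^2)_x=\rho\dot u$, so that $(\ref{a1})_2$ becomes $\rho\dot u+p_x=[\mu(\rho)u_x]_x$, where $\dot u=u_t+uu_x$. Multiplying by $u$ and integrating over $\R$ then splits into three terms that I would handle separately.

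For the convective term, combining $\int\rho uu_t\,dx$ with $\int\rho u^2u_x\,dx$ and using the continuity equation (to rewrite $\frac12\rho_tu^2=-\frac12(\rho u)_xu^2$ and integrate by parts) yields exactly $\frac{d}{dt}\int\frac12\rho u^2\,dx$. For the pressure term, differentiating the internal energy $\int\frac{\rho^\gamma}{\gamma-1}\,dx$ in time and applying the continuity equation shows $\int p_xu\,dx=\frac{d}{dt}\int\frac{\rho^\gamma}{\gamma-1}\,dx$. For the viscous term, a single integration by parts gives $\int[\mu(\rho)u_x]_xu\,dx=-\int(1+\rho^\beta)u_x^2\,dx$, using $(\ref{a3})$.

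Assembling these contributions produces the differential energy balance
\begin{equation}\nonumber
\frac{d}{dt}\int_{\R}\Big(\tfrac12\rho u^2+\tfrac{\rho^\gamma}{\gamma-1}\Big)\,dx+\int_{\R}(1+\rho^\beta)u_x^2\,dx=0.
\end{equation}
Integrating in time over $[0,t]$ bounds the left-hand side by the initial energy $\int_{\R}\big(\frac12\rho_0u_0^2+\frac{\rho_0^\gamma}{\gamma-1}\big)\,dx$, which is finite by $(\ref{a5})$ (since $\sqrt{\rho_0}u_0\in L^2$ gives $\rho_0u_0^2\in L^1$ and $\rho_0^\gamma\in L^1$). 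As $\gamma>1$, this delivers the stated bound $C$ depending only on the initial data, and the cumulative dissipation $\int_0^T\!\!\int_{\R}(1+\rho^\beta)u_x^2$ is controlled simultaneously.

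The main obstacle is justifying the integrations by parts on the whole line rather than on the torus or a bounded interval: the boundary contributions $\rho u^3$, $pu$, and $\mu(\rho)u_xu$ at $x=\pm\infty$ must be shown to vanish. For a smooth solution with the decay encoded in $(\ref{a5})$ (in particular $\rho,\rho^\gamma,\rho^\beta\in L^1\cap H^2$, $u\in D^1$, with $(\rho,u)$ vanishing at infinity), each of these boundary terms decays at spatial infinity, so the manipulations are legitimate. This is the only point requiring genuine care; the remaining algebra is routine.
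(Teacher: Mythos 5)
Your proof is correct and is exactly the standard energy identity that the paper invokes without proof (it simply labels the lemma ``the usual energy estimate''): multiply the momentum equation by $u$, convert the convective and pressure terms into time derivatives of the kinetic and internal energies via the continuity equation, and integrate by parts on the viscous term. The constants match (your $\tfrac12\rho u^2+\tfrac{\rho^\gamma}{\gamma-1}$ controls $\rho u^2+\rho^\gamma$ since $\gamma>1$), and your remark about checking the vanishing of boundary terms at spatial infinity is the only point of genuine care, correctly identified.
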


The upper bound of the density is stated as follows.
\begin{Lemma}\label{lemmac2}
Suppose that  $(\rho,u)$ is a smooth solution to
$(\ref{a1})-(\ref{a4})$. Then for any $T>0$, there exists an
absolute constant $C>0$ which depends on the initial data and
$\beta\ge 0$ such that
\begin{equation}\nonumber
\rho(x,t)\leq C, \ (x,t)\in \R\times(0,T].
\end{equation}

\begin{proof}
Let $$\xi=\int_{-\infty}^{x}\rho u(y)dy. $$\\
Using the momentum equation $(\ref{a1})_{2}$ , we  have\\
$$\xi_{tx}+(\rho u^{2})_{x}=(\mu(\rho)u_{x})_{x}-p_{x}.$$
Integrating with respect to $x$ over $(-\infty,x)$ yields
\begin{equation}\label{c3}
\xi_{t}+\rho u^{2}-\mu(\rho)u_{x}+p=0,
\end{equation}
Using the mass equation $(\ref{a1})_{1}$, we rewrite $(\ref{c3})$ as
\begin{equation}\label{c4}
\xi_{t}+\rho u^{2}+\mu(\rho)\frac{\rho_{t}+u\rho_{x}}{\rho}+p=0.
\end{equation}
Let $X(t,x)$ be the particle trajectory defined by
\begin{equation}\nonumber
\Biggl\{\begin{array}{c}
\frac{dX(t,x)}{dt}=u(X(t,x),t),\\[1mm]
X(0,x)=x.
\end{array}
\end{equation}
Then
\begin{equation}\label{e1}
\frac{d\xi}{dt}(X(t,x),t)=\xi_{t}+u\xi_{x}=\xi_{t}+\rho u^{2}.
\end{equation}
Denote  $$\eta(\rho)=\int_{1}^{\rho}\frac{\mu(s)}{s}ds =\Biggl\{\begin{array}{c}\ln\rho+\frac{1}{\beta}(\rho^{\beta}-1),if \ \beta>0,\\
2\ln\rho ,\ \ \ \ \ \ \ \ \ \ \ \ \ \ if \ \beta=0.\end{array}$$ It
follows from $(\ref{c4})$ and $(\ref{e1})$ that
\begin{equation}\label{c6}
\frac{d}{dt}(\xi+\eta)(X(t,x),t)\leq\frac{d}{dt}(\xi+\eta)(X(t,x),t)+p(X(t,x),t)=0.
\end{equation}
Integrating $(\ref{c6})$ over $(0,t)$, we have
\begin{equation}\nonumber
(\xi+\eta)(X(t,x),t)\leq\xi(X(0,x),0)+\eta(X(0,x),0).
\end{equation}
Since $$\xi(X(0,x),0)=\int_{-\infty}^{X((0,x),0)}\rho_{0}u_{0}(y)dy\leq |\int_{\R}\rho_{0}u_{0}dy| \leq\|\sqrt{\rho_{0}}u_{0}\|_{L^{2}}\|\rho_{0}\|_{L^{1}}^{1/2}\leq C,$$\\
\begin{equation}\nonumber
\begin{aligned}
\eta(X(0,x),0)=\int_{1}^{\rho_{0}}\frac{\mu(s)}{s}ds
&=\Biggl\{\begin{array}{c}\ln\rho_{0}+\frac{1}{\beta}(\rho_{0}^{\beta}-1),\ if \ \beta>0\\
2\ln\rho_{0} ,\ \ \ \ \ \ \ \ \ \ \ \ \ \ \ if \ \beta=0\end{array}\\
&\leq\Biggl\{\begin{array}{c}\rho_{0}+\frac{1}{\beta}(\rho_{0}^{\beta}-1),\
\ \ \ if \ \beta>0\\2\rho_{0},\ \ \ \ \ \ \ \ \ \ \ \ \ \ \ \ \ \ \
if \ \beta=0\end{array},
\end{aligned}
\end{equation}
we have$$\xi(x,t)+\eta(x,t)\leq C,$$ which implies
\begin{equation}\nonumber
\ln \rho+\frac{1}{\beta}(\rho^{\beta}-1)\leq C-\int_{-\infty}^{x}\rho udx\leq C+\int_{\R}|\rho u|dx\leq C +\|\sqrt{\rho}u\|_{L^{2}}\|\rho\|_{L^{1}}\leq C,\  \beta>0,
\end{equation}
or
$$2\ln\rho\leq C-\int_{-\infty}^{x}\rho udx\leq C+\int_{\R}|\rho u|dx\leq C +\|\sqrt{\rho}u\|_{L^{2}}\|\rho\|_{L^{1}}\leq C,\  \beta=0.$$
Then we  have$$\ln \rho \leq \Biggl\{\begin{array}{c}C+\frac{1}{\beta},if\ \beta>0,\\
C,\ \ \ \ \ \ if\ \beta=0.\end{array}$$
Consequently
$$\rho (x,t)\leq C,\ \beta\geq0.$$\\
The proof of the lemma  is completed.
\end{proof}
\end{Lemma}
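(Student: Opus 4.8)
The plan is to obtain a pointwise upper bound on $\rho$ by showing that a suitable functional of the solution is monotone along particle trajectories. The key device is the primitive of the momentum, $\xi(x,t)=\int_{-\infty}^{x}\rho u\,dy$, which satisfies $\xi_x=\rho u$ and $\xi_{tx}=(\rho u)_t$. Substituting the momentum equation $(\ref{a1})_2$ and integrating once in $x$ from $-\infty$ (legitimate because $(\rho,u)$ vanishes at infinity), I would arrive at the first-order relation $\xi_t+\rho u^2-\mu(\rho)u_x+p=0$. The purpose of this step is to replace the second-order viscous term $[\mu(\rho)u_x]_x$ by the first-order quantity $\mu(\rho)u_x$, which the continuity equation $(\ref{a1})_1$ lets me rewrite entirely in terms of the density via $u_x=-(\rho_t+u\rho_x)/\rho$.

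The crux is then to choose the correct density function to pair with $\xi$. Setting $\eta(\rho)=\int_1^\rho \frac{\mu(s)}{s}\,ds$, so that $\eta'(\rho)=\mu(\rho)/\rho$, the material derivative of $\eta(\rho)$ along the trajectory $X(t,x)$ (with $\dot X=u$, $X(0,x)=x$) is $\frac{\mu(\rho)}{\rho}(\rho_t+u\rho_x)=-\mu(\rho)u_x$, which is exactly the viscous contribution appearing in the relation above, while $\frac{d}{dt}\xi(X,t)=\xi_t+u\xi_x=\xi_t+\rho u^2$. Adding the two, the viscous and convective terms reorganize and the relation collapses to $\frac{d}{dt}\big(\xi+\eta\big)(X(t,x),t)=-p\le 0$. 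This monotonicity is the heart of the argument, and it is precisely the choice $\eta=\int_1^\rho \mu(s)/s\,ds$ that produces it; since this choice adapts to the law $\mu(\rho)=1+\rho^\beta$ automatically, the same computation works uniformly for all $\beta\ge 0$, including the constant-viscosity case $\beta=0$.

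Integrating in time yields $(\xi+\eta)(X(t,x),t)\le \xi(x,0)+\eta(\rho_0(x))$, so it remains to bound both sides by constants depending only on the data and the basic energy. The initial flux is controlled by Cauchy--Schwarz, $|\xi(x,0)|\le \|\rho_0\|_{L^1}^{1/2}\|\sqrt{\rho_0}u_0\|_{L^2}\le C$, and $\eta(\rho_0)\le C$ follows from $\rho_0\in H^2\subset L^\infty$ together with $\ln s\le s$. At the current time the same estimate gives $-\xi(X,t)\le \|\rho\|_{L^1}^{1/2}\|\sqrt{\rho}u\|_{L^2}\le C$, where $\|\rho\|_{L^1}$ is conserved by the mass equation and $\|\sqrt{\rho}u\|_{L^2}$ is bounded by the energy estimate of Lemma \ref{lemmac1}. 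Hence $\eta(\rho(X,t))\le C$, and since $\eta(\rho)\ge \ln\rho\to\infty$ as $\rho\to\infty$, this forces $\rho(X(t,x),t)\le C$ uniformly; because the trajectories cover all of $\R$, the bound $\rho\le C$ holds everywhere on $\R\times(0,T]$.

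The main obstacle is conceptual rather than computational: the whole proof hinges on guessing the functional $\xi+\eta$ whose trajectory derivative is sign-definite, and once it is found the remaining manipulations are short. The only points I expect to need care are justifying the integration at $-\infty$ and verifying the finiteness of $\eta(\rho_0)$, which is exactly where the $L^\infty$ bound on the initial density enters.
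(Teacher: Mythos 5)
Your proposal is correct and follows essentially the same route as the paper: the primitive $\xi=\int_{-\infty}^{x}\rho u\,dy$, the integrated momentum relation $\xi_t+\rho u^2-\mu(\rho)u_x+p=0$, the choice $\eta(\rho)=\int_1^\rho\mu(s)/s\,ds$ so that $\tfrac{d}{dt}(\xi+\eta)$ along particle paths equals $-p\le 0$, and the Cauchy--Schwarz/energy bounds on $\xi$ at times $0$ and $t$. The only cosmetic difference is that the paper writes out $\eta$ explicitly in the two cases $\beta>0$ and $\beta=0$, which your general formula subsumes.
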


\vspace{3mm}

\subsection{{The  estimates of the first derivatives}}\ \\

The first derivative estimates of the velocity is as follows.
\begin{Lemma}\label{lemmac3}
Suppose that $(\rho,u)$  is a smooth solution to
$(\ref{a1})-(\ref{a4})$. Then for any $T>0$, it holds
\begin{equation}\nonumber
\int_{\R}(u_{x})^{2}dx+\int_{0}^{T}\int_{\R}\rho u_{t}^{2}dxdt+\int_{0}^{T}\|u_{x}\|_{L^{\infty}}^{2}dt\leq C(T).
\end{equation}
\begin{proof}
Using $(\ref{a1})_{1}$,we rewrite $(\ref{a1})_{2}$ as
\begin{equation}\label{c7}
\rho u_{t}+\rho uu_{x}+(\rho^{\gamma})_{x}=(\mu(\rho)u_{x})_{x}.
\end{equation}
Multiplying on both sides of $(\ref{c7})$ by $u_{t}$, integrating
over $\R$, we have
\begin{equation}\nonumber
\begin{aligned}
\int_{\R}\rho u_{t}^{2}dx&+\frac{1}{2}\frac{d}{dt}\int_{\R}\mu(\rho)u_{x}^{2}dx=\frac{1}{2}\int_{\R}(\rho^{\beta})_{t}u_{x}^{2}dx-\int_{\R}\rho u_{t}uu_{x}dx-\int_{\R}(\rho^{\gamma})_{x}u_{t}dx\\
&\leq 
\frac{d}{dt}\int_{\R}\rho^{\gamma}u_{x}dx+\gamma\int_{\R}\rho^{\gamma-1}(\rho u_{x}+u\rho_{x})u_{x}dx-\frac{1}{2}\int_{\R}(u(\rho^{\beta})_{x}+\beta\rho^{\beta}u_{x})u_{x}^{2}dx\\
&+\frac{1}{2}\int_{\R}\rho u_{t}^{2}dx+C\int_{\R}\rho
u^{2}u_{x}^{2}dx.
\end{aligned}
\end{equation}
It yields
\begin{equation}\label{c8}
\begin{aligned}
\frac{1}{2}\int_{\R}\rho u_{t}^{2}dx+\frac{1}{2}\frac{d}{dt}\int_{\R}\mu(\rho)u_{x}^{2}dx&\leq\frac{d}{dt}\int_{\R}\rho^{\gamma}u_{x}dx+\gamma\int_{\R}\rho^{\gamma-1}(\rho u_{x}+u\rho_{x})u_{x}dx\\
&-\frac{1}{2}\int_{\R}(u(\rho^{\beta})_{x}+\beta\rho^{\beta}u_{x})u_{x}^{2}dx+C\int_{\R}\rho u^{2}u_{x}^{2}dx\\
&=\frac{d}{dt}\int_{\R}\rho^{\gamma}u_{x}dx+\Sigma_{i=1}^{2}I_{i}+C\int_{\R}\rho
u^{2}u_{x}^{2}dx,
\end{aligned}
\end{equation}
where we used the following equation
$$(\rho^{\beta})_{t}+u(\rho^{\beta})_{x}+\beta\rho^{\beta}u_{x}=0,\ \beta\geq0 .$$
Then we estimate the  terms $I_{1}-I_{2}$ as follows.
\begin{equation}\label{c9}
\begin{aligned}
&I_{1}=\gamma\int_{\R}\rho^{\gamma}u_{x}^{2}dx+\gamma\int_{\R}\rho^{\gamma-1}\rho_{x}uu_{x}dx\\
&\leq 
C\|\rho\|_{L^{\infty}}^{\gamma}\int_{\R}u_{x}^{2}dx+\gamma\int_{\R}(\int_{0}^{\rho}\frac{s^{\gamma-1}}{\mu(s)}ds)_{x}u(\mu(\rho)u_{x}-\rho^{\gamma})dx+
\gamma\int_{\R}(\int_{0}^{\rho}\frac{s^{2\gamma-1}}{\mu(s)}ds)_{x}udx\\
&\leq
C\int_{\R}\mu(\rho)u_{x}^{2}dx+C(T)-\gamma\int_{\R}(\int_{0}^{\rho}\frac{s^{\gamma-1}}{\mu(s)}ds)
u(\rho u_{t}+\rho uu_{x})dx\\
&\ \ \ \ \ +C\int_{\R}(\int_{0}^{\rho}s^{2\gamma-1}ds)|u_{x}|dx\\
&\leq C\int_{\R}\mu(\rho)u_{x}^{2}dx+\varepsilon\int_{\R}\rho
u_{t}^{2}dx+C\int_{\R}\rho u^{2}u_{x}dx+C(T).
\end{aligned}
\end{equation}

\begin{equation}\label{c13}
\begin{aligned}
&I_{2}=-\frac{1}{2}\int_{\R}(u(\rho^{\beta})_{x}+\beta\rho^{\beta}u_{x})u_{x}^{2}dx=-\frac{1}{2}\int_{\R}\beta\rho^{\beta}u_{x}^{3}dx-\frac{1}{2}\int_{\R}\beta\rho^{\beta-1}\rho_{x}uu_{x}^{2}dx\\
&=I_{21}+I_{22}.
\end{aligned}
\end{equation}
Direct estimates give
\begin{equation}\label{c13-1}
\begin{aligned}
I_{21}=-\frac{1}{2}\int_{\R}\beta\rho^{\beta}u_{x}^{3}dx\leq
C\|u_{x}\|_{L^{\infty}}\int_{\R}\mu(\rho)u_{x}^{2}dx
\end{aligned}
\end{equation}
\begin{equation}\label{c13-2}
\begin{aligned}
&I_{22}=-\frac{1}{2}\int_{\R}\beta\rho^{\beta-1}\rho_{x}uu_{x}^{2}dx\\
&=-\frac{1}{2}\int_{\R}\frac{\beta\rho^{\beta-1}\rho_{x}}{(\mu(\rho))^{2}}u(\mu(\rho)u_{x}-\rho^{\gamma})^{2}dx -\int_{\R}\frac{\beta\rho^{\beta-1}\rho_{x}}{(\mu(\rho))^{2}}u\mu(\rho)u_{x}\rho^{\gamma}dx\\
&\ \ \ \ \ \ \ \ \ +\frac{1}{2}\int_{\R}\frac{\beta\rho^{2\gamma+\beta-1}}{(\mu(\rho))^{2}}\rho_{x}udx\\
&\leq -\frac{1}{2}\int_{\R}(\int_{0}^{\rho}\frac{\beta s^{\beta-1}}{(\mu(s))^{2}}ds)_{x}u(\mu(\rho)u_{x}-\rho^{\gamma})^{2}dx-\int_{\R}(\int_{0}^{\rho}\frac{\beta s^{\gamma+\beta-1}}{(\mu(s))^{2}}ds)_{x}u[\mu(\rho)u_{x}-\rho^{\gamma}]dx\\
&\ \ \ \ \ \ \ \ \ -\frac{1}{2}\int_{\R}(\int_{0}^{\rho}\frac{\beta s^{2\gamma+\beta-1}}{(\mu(s))^{2}}ds)_{x}udx\\
&\leq \frac{1}{2}\int_{\R}(\int_{0}^{\rho}\frac{\beta s^{\beta-1}}{(\mu(s))^{2}}ds)[u_{x}(\mu(\rho)u_{x}-\rho^{\gamma})^{2}+2u(\mu(\rho)u_{x}-\rho^{\gamma})(\mu(\rho)u_{x}-\rho^{\gamma})_{x}]dx\\
&\ \ \ \ \ \ \ \ \ +\int_{\R}(\int_{0}^{\rho}\frac{\beta s^{\gamma+\beta-1}}{(\mu(s))^{2}}ds)[u_{x}(\mu(\rho)u_{x}-\rho^{\gamma})+u(\mu(\rho)u_{x}-\rho^{\gamma})_{x}]dx\\
&\ \ \ \ \ \ \ \ \ +\frac{1}{2}\int_{\R}(\int_{0}^{\rho}\frac{\beta s^{2\gamma+\beta-1}}{(\mu(s))^{2}}ds)u_{x}dx\\
&\leq C(T)+C\int_{\R}\mu(\rho)u_{x}^{2}dx-\int_{\R}(\int_{0}^{\rho}\frac{\beta s^{\beta-1}}{(\mu(s))^{2}}ds)u(\mu(\rho)u_{x}-\rho^{\gamma})(\rho u_{t}+\rho uu_{x})dx\\
&\ \ \ \ \ \ \ \ \ -\int_{\R}(\int_{0}^{\rho}\frac{\beta s^{\gamma+\beta-1}}{(\mu(s))^{2}}ds)u(\rho u_{t}+\rho uu_{x})dx+\frac{1}{2}\int_{\R}\beta\frac{1}{2\gamma+\beta}\rho^{2\gamma+\beta}u_{x}dx\\
&\leq C(T)+C\int_{\R}\mu(\rho)u_{x}^{2}dx+\varepsilon\int_{\R}\rho
u_{t}^{2}dx+C\int_{\R}\rho u^{2}u_{x}^{2}dx+C\int_{\R}\rho
u^{2}u_{x}dx.
\end{aligned}
\end{equation}

Using the Gagliardo-Nirenberg inequality, we get
\begin{equation}\nonumber
\begin{aligned}
\|u_{x}\|_{L^{\infty}}
&\leq\|\mu(\rho)u_{x}-\rho^{\gamma}\|_{L^{\infty}}+\|\rho^{\gamma}\|_{L^{\infty}}\leq C\|\mu(\rho)u_{x}-\rho^{\gamma}\|_{L^{\infty}}+C\\
&\leq \|\mu(\rho)u_{x}-\rho^{\gamma}\|_{L^{2}}^{\frac{1}{2}}\|(\mu(\rho)u_{x}-\rho^{\gamma})_{x}\|_{L^{2}}^{\frac{1}{2}}+C\\
&\leq C(\|\sqrt{\mu(\rho)}u_{x}\|_{L^{2}}+1)^{\frac{1}{2}}(\|\sqrt{\rho}u_{t}\|_{L^{2}}+\|u_{x}\|_{L^{\infty}}\|\sqrt{\rho}u\|_{L^{2}})^{\frac{1}{2}}+C\\
&\leq C(\|\sqrt{\mu(\rho)}u_{x}\|_{L^{2}}+1)^{\frac{1}{2}}\|\sqrt{\rho}u_{t}\|_{L^{2}}^{\frac{1}{2}}+C(\|\sqrt{\mu(\rho)}u_{x}\|_{L^{2}}+1)^{\frac{1}{2}}\|u_{x}\|_{L^{\infty}}^{\frac{1}{2}}+C\\
&\leq \frac{1}{2}\|u_{x}\|_{L^{\infty}}+C(\|\sqrt{\mu(\rho)}u_{x}\|_{L^{2}}+1)+\varepsilon\|\sqrt{\rho}u_{t}\|_{L^{2}}.
\end{aligned}
\end{equation}
It concludes that
\begin{equation}\label{c18}
\|u_{x}\|_{L^{\infty}}\leq C\|\sqrt{\mu(\rho)}u_{x}\|_{L^{2}}+\varepsilon\|\sqrt{\rho}u_{t}\|_{L^{2}}+C.
\end{equation}
It follows that
\begin{equation}\label{c19}
\int_{\R}\rho u^{2}u_{x}dx\leq C\|u_{x}\|_{L^{\infty}}\int_{\R}\rho
u^{2}dx\leq C\|u_{x}\|_{L^{\infty}}\leq
C\|\sqrt{\mu(\rho)}u_{x}\|_{L^{2}}+\varepsilon\|\sqrt{\rho}u_{t}\|_{L^{2}}+C(T),
\end{equation}
and
\begin{equation}\label{c20}
\int_{\R}\rho u^{2}u_{x}^{2}dx\leq
\|u_{x}\|_{L^{\infty}}^{2}\int_{\R}\rho u^{2}dx\leq
C\|\sqrt{\mu(\rho)}u_{x}\|_{L^{2}}^{2}+\varepsilon\|\sqrt{\rho}u_{t}\|_{L^{2}}^{2}+C(T).
\end{equation}
Substituting $(\ref{c9})-(\ref{c20})$ into $(\ref{c8})$, we get
\begin{equation}\label{c21}
\frac{d}{dt}\int_{\R}\mu(\rho)u_{x}^{2}dx+\int_{\R}\rho
u_{t}^{2}dx\leq
\frac{d}{dt}\int_{\R}\rho^{\gamma}u_{x}dx+C(\int_{\R}\mu(\rho)u_{x}^{2}dx)^{2}+C(T).
\end{equation}
Integrating $(\ref{c21})$ over $[0,T]$ and using the Cauchy
inequality and Gronwall inequality, we have
\begin{equation}\nonumber
\int_{\R}\mu(\rho)u_{x}^{2}dx+\int_{0}^{T}\int_{\R}\rho
u_{t}^{2}dxdt\leq C(T).
\end{equation}
Using $(\ref{c18})$ again yields
$$\int_{0}^{T}\|u_{x}\|_{L^{\infty}}^{2}dt\leq C(T).$$
The proof of the lemma  is completed.
\end{proof}
\end{Lemma}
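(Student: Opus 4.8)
The plan is to run the standard $u_t$-weighted energy estimate on the rewritten momentum equation \eqref{c7} and to close the resulting differential inequality by a Gronwall argument that exploits the integral bound of Lemma \ref{lemmac1} together with the uniform density bound of Lemma \ref{lemmac2}. First I would multiply \eqref{c7} by $u_t$ and integrate over $\R$. Integrating by parts on the viscous term converts $\int_\R(\mu(\rho)u_x)_x u_t$ into $-\tfrac12\frac{d}{dt}\int_\R\mu(\rho)u_x^2+\tfrac12\int_\R(\rho^\beta)_t u_x^2$, while the convective term is split by Cauchy's inequality to produce $\tfrac12\int_\R\rho u_t^2$ (absorbed on the left) and a remainder $C\int_\R\rho u^2u_x^2$. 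The pressure term $-\int_\R(\rho^\gamma)_x u_t$ is integrated by parts in $x$ and, using the mass equation, rewritten as the total derivative $\frac{d}{dt}\int_\R\rho^\gamma u_x$ plus the lower-order term $I_1=\gamma\int_\R\rho^{\gamma-1}(\rho u_x+u\rho_x)u_x$; likewise $\tfrac12\int_\R(\rho^\beta)_t u_x^2$ becomes $I_2=-\tfrac12\int_\R(u(\rho^\beta)_x+\beta\rho^\beta u_x)u_x^2$. This yields \eqref{c8}, and the task reduces to bounding $I_1$, $I_2$ and $\int_\R\rho u^2u_x^2$.

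The crux is to control the terms carrying the singular factors $\rho^{\gamma-1}\rho_x$ and $\rho^{\beta-1}\rho_x$ without any positive lower bound on $\rho$. I would introduce the effective viscous flux $F:=\mu(\rho)u_x-\rho^\gamma$, whose spatial derivative is, by \eqref{c7}, exactly $F_x=\rho u_t+\rho u u_x$. Writing $u_x=F/\mu(\rho)+\rho^\gamma/\mu(\rho)$ and recognizing $\rho^{\gamma-1}\rho_x/\mu(\rho)=(\int_0^\rho s^{\gamma-1}/\mu(s)\,ds)_x$ (and similarly with $\rho^{2\gamma-1}$), I would integrate by parts to move the derivative onto these \emph{bounded} antiderivatives; every resulting term then involves either $F_x=\rho(u_t+uu_x)$ or a power of $\rho$ times $u_x$ or $u$, all estimable by Cauchy's inequality, the bound $\rho\le C$, and the energy bounds $\int_\R\rho u^2,\int_\R\rho^\gamma\le C$. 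This gives $I_1\le C\int_\R\mu(\rho)u_x^2+\vp\int_\R\rho u_t^2+C\int_\R\rho u^2u_x+C(T)$. The same device, now with antiderivatives such as $\int_0^\rho\beta s^{\beta-1}/\mu(s)^2\,ds$, handles $I_{22}$, and $I_{21}=-\tfrac12\int_\R\beta\rho^\beta u_x^3$ is bounded by $C\|u_x\|_{L^\infty}\int_\R\mu(\rho)u_x^2$. The step I expect to be the main obstacle is precisely $I_{22}$: $\rho^{\beta-1}$ is singular at the vacuum when $0<\beta<1$, yet the antiderivative $\int_0^\rho\beta s^{\beta-1}/\mu(s)^2\,ds$ remains finite and bounded (the integral converges because $\beta>0$, and all the $\beta$-terms vanish identically when $\beta=0$), so the integration-by-parts manipulation stays legitimate for every $\beta\ge0$.

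To absorb $\|u_x\|_{L^\infty}$ I would prove \eqref{c18} by a Gagliardo--Nirenberg estimate on $F$: since $\mu(\rho)\ge1$ and $\rho\le C$ give $\|F\|_{L^2}\le C(\|\sqrt{\mu(\rho)}u_x\|_{L^2}+1)$ and $\|F_x\|_{L^2}\le C\|\sqrt{\rho}u_t\|_{L^2}+C\|u_x\|_{L^\infty}$, the bound $\|u_x\|_{L^\infty}\le\|F\|_{L^\infty}+C\le C\|F\|_{L^2}^{1/2}\|F_x\|_{L^2}^{1/2}+C$ together with Young's inequality yields $\|u_x\|_{L^\infty}\le C\|\sqrt{\mu(\rho)}u_x\|_{L^2}+\vp\|\sqrt{\rho}u_t\|_{L^2}+C$. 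Feeding this into $I_{21}$ and into $\int_\R\rho u^2u_x$, $\int_\R\rho u^2u_x^2$ (where $\int_\R\rho u^2\le C$ is used), and choosing $\vp$ small so that all $\rho u_t^2$-contributions are absorbed on the left, collapses \eqref{c8} to \eqref{c21}:
$$\frac{d}{dt}\int_\R\mu(\rho)u_x^2+\int_\R\rho u_t^2\le\frac{d}{dt}\int_\R\rho^\gamma u_x+C\Big(\int_\R\mu(\rho)u_x^2\Big)^2+C(T).$$

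Finally I would set $y(t)=\int_\R\mu(\rho)u_x^2$ and note $\int_\R\rho^\gamma u_x\le\tfrac14 y+C$ (Cauchy, $\mu(\rho)\ge1$, and $\int_\R\rho^{2\gamma}\le\|\rho\|_{L^\infty}^\gamma\int_\R\rho^\gamma\le C$), so that after moving this term to the left the inequality takes the form $\frac{d}{dt}\tilde y\le C\tilde y^2+C$ with $\tilde y$ comparable to $y$. The quadratic term is then read as $C\tilde y\cdot\tilde y$, i.e.\ as a linear term in $\tilde y$ whose coefficient $C\tilde y$ is integrable in time, because Lemma \ref{lemmac1} supplies $\int_0^T y\,dt\le C$ and hence $\int_0^T\tilde y\,dt\le C(T)$. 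Gronwall's inequality then gives $\tilde y(t)\le(\tilde y(0)+CT)\exp(C\int_0^T\tilde y\,dt)\le C(T)$, whence $\sup_{[0,T]}\int_\R u_x^2+\int_0^T\int_\R\rho u_t^2\le C(T)$. Inserting these bounds back into \eqref{c18} and integrating in time yields $\int_0^T\|u_x\|_{L^\infty}^2\,dt\le C(T)$, completing the proof.
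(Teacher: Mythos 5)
Your proposal is correct and follows essentially the same route as the paper: the $u_t$-weighted energy identity, the decomposition into $I_1$, $I_2$ and $\int_\R\rho u^2u_x^2$, the effective viscous flux $F=\mu(\rho)u_x-\rho^\gamma$ with $F_x=\rho u_t+\rho uu_x$ combined with the antiderivative integration-by-parts device for the $\rho^{\gamma-1}\rho_x$ and $\rho^{\beta-1}\rho_x$ terms, the Gagliardo--Nirenberg bound \eqref{c18} for $\|u_x\|_{L^\infty}$, and the Gronwall closure using the time-integrability of $\int_\R\mu(\rho)u_x^2$ from Lemma \ref{lemmac1}. Your explicit justification of the final Gronwall step (reading the quadratic term as linear with integrable coefficient) is in fact slightly more detailed than the paper's.
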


Next we show the  first derivative estimates of the density.
\begin{Lemma}\label{lemmac4}
Let $(\rho,u)$  be the smooth solution to $(\ref{a1})-(\ref{a4})$.
Then for any $T>0$, it holds
\begin{equation}\nonumber
\int_{\R}\bigg(\rho_{x}^{2}+(\rho^{\gamma})_{x}^{2}+(\rho^{\beta})_{x}^{2}\bigg)dx\leq C(T).
\end{equation}
\begin{proof}
Differentiating $(\ref{a1})_{1}$ with respect to $x$, multiplying
the resulting equation by $\rho_{x}$ and integrating over $\R$, we
have
\begin{equation}\label{c22}
\begin{aligned}
\frac{1}{2}\frac{d}{dt}\int_{\R}\rho_{x}^{2}dx&=-\frac{3}{2}\int_{\R}\rho_{x}^{2}u_{x}dx-\int_{\R}\rho u_{xx}\rho_{x}dx\\
&\leq C\|u_{x}\|_{L^{\infty}}\int_{\R}\rho_{x}^{2}+C\|\rho\|_{L^{\infty}}\|u_{xx}\|_{L^{2}}\|\rho_{x}\|_{L^{2}}.
\end{aligned}
\end{equation}
Since $$\rho u_{t}+\rho
uu_{x}+(\rho^{\gamma})_{x}=(\mu(\rho)u_{x})_{x},$$ combining
\eqref{c18} and Lemma \ref{lemmac3}, we have
\begin{equation}\label{c23}
\begin{aligned}
\|u_{xx}\|_{L^{2}}&\leq C(\|\rho u_{t}\|_{L^{2}}+\|\rho uu_{x}\|_{L^{2}}+\|(\rho^{\gamma})_{x}\|_{L^{2}}+\|(\rho^{\beta})_{x}u_{x}\|_{L^{2}})\\
&\leq C(\|\sqrt{\rho}u_{t}\|_{L^{2}}+\|u_{x}\|_{L^{\infty}}+\|(\rho^{\gamma})_{x}\|_{L^{2}}
+\|u_{x}\|_{L^{\infty}}\|(\rho^{\beta})_{x}\|_{L^{2}})\\
&\leq
C\bigg[\|\sqrt{\rho}u_{t}\|_{L^{2}}+1+\|(\rho^{\gamma})_{x}\|_{L^{2}}+(\|\sqrt{\rho}u_{t}\|_{L^{2}}+1)\|(\rho^{\beta})_{x}\|_{L^{2}}\bigg].
\end{aligned}
\end{equation}
Putting \eqref{c18}, \eqref{c23} into \eqref{c22}, we get
\begin{equation}\label{c24}
\begin{aligned}
&\frac{d}{dt}\int_{\R}\rho_{x}^{2}dx\leq
C(\|\sqrt{\rho}u_{t}\|_{L^{2}}+1)\int_{\R}\rho_{x}^{2}dx
+C\bigg[\|\sqrt{\rho}u_{t}\|_{L^{2}}+1+\|(\rho^{\gamma})_{x}\|_{L^{2}}\\
&\ \ \ \ +(\|\sqrt{\rho}u_{t}\|_{L^{2}}+1)\|(\rho^{\beta})_{x}\|_{L^{2}}\bigg]\|\rho_{x}\|_{L^{2}}\\
&\leq
C(\|\sqrt{\rho}u_{t}\|_{L^{2}}^{2}+1)\int_{\R}\rho_{x}^{2}dx+C(\|(\rho^{\gamma})_{x}\|_{L^{2}}^{2}
+\|(\rho^{\beta})_{x}\|_{L^{2}}^{2})+C(\|\sqrt{\rho}u_{t}\|_{L^{2}}^{2}+1)\\
&\leq
C(\|\sqrt{\rho}u_{t}\|_{L^{2}}^{2}+1)\int_{\R}\rho_{x}^{2}+(\rho^{\gamma})_{x}^{2}+(\rho^{\beta})_{x}^{2}dx
+C(\|\sqrt{\rho}u_{t}\|_{L^{2}}^{2}+1).
\end{aligned}
\end{equation}
Note that
$$(\rho^{\gamma})_{t}+u(\rho^{\gamma})_{x}+\gamma(\rho^{\gamma})u_{x}=0$$
and$$(\rho^{\beta})_{t}+u(\rho^{\beta})_{x}+\beta(\rho^{\beta})u_{x}=0,$$
for any $\gamma>1,\ \beta\geq 0$. We can obtain in a similar way
that
\begin{equation}\label{c24-1}
\begin{aligned}
&\frac{d}{dt}\int_{\R}(\rho^{\gamma})_{x}^{2}+(\rho^{\beta})_{x}^{2}dx\\
&\leq
C(\|\sqrt{\rho}u_{t}\|_{L^{2}}^{2}+1)\int_{\R}\rho_{x}^{2}+(\rho^{\gamma})_{x}^{2}+(\rho^{\beta})_{x}^{2}dx
+C(\|\sqrt{\rho}u_{t}\|_{L^{2}}^{2}+1).
\end{aligned}
\end{equation}
Substituting \eqref{c24-1} into \eqref{c24}, using Lemma
\ref{lemmac3} and the Gronwall inequality , we have
$$\int_{\R}\bigg(\rho_{x}^{2}+(\rho^{\gamma})_{x}^{2}+(\rho^{\beta})_{x}^{2}\bigg)dx\leq
C(T).$$ The proof of the lemma is completed.
\end{proof}
\end{Lemma}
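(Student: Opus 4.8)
The plan is to derive a coupled differential inequality for the three quantities $\int_\R \rho_x^2\,dx$, $\int_\R (\rho^\gamma)_x^2\,dx$ and $\int_\R (\rho^\beta)_x^2\,dx$ simultaneously, and then close it by Gronwall's inequality. First I would differentiate the mass equation $(\ref{a1})_1$ in $x$, multiply by $\rho_x$, and integrate over $\R$. After integrating by parts the convective term, using $\int_\R u\rho_{xx}\rho_x\,dx=-\tfrac12\int_\R u_x\rho_x^2\,dx$, this yields
\begin{equation}\nonumber
\frac{1}{2}\frac{d}{dt}\int_\R \rho_x^2\,dx = -\frac{3}{2}\int_\R u_x\rho_x^2\,dx - \int_\R \rho u_{xx}\rho_x\,dx,
\end{equation}
so the right-hand side is controlled once I have $\|u_x\|_{L^\infty}$ and $\|u_{xx}\|_{L^2}$ in hand.

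The second step is to bound $\|u_{xx}\|_{L^2}$ by viewing the momentum equation $(\ref{c7})$ as an elliptic relation for $u_{xx}$. Since $\mu(\rho)=1+\rho^\beta\ge 1$, solving for $u_{xx}$ gives $\|u_{xx}\|_{L^2}\le C(\|\rho u_t\|_{L^2}+\|\rho u u_x\|_{L^2}+\|(\rho^\gamma)_x\|_{L^2}+\|(\rho^\beta)_x u_x\|_{L^2})$. Invoking the uniform upper bound of $\rho$ from Lemma \ref{lemmac2}, the $L^\infty$ bound $(\ref{c18})$ for $u_x$, and the energy bound $\|\sqrt{\rho}u\|_{L^2}\le C$ from Lemma \ref{lemmac1}, each summand reduces to $\|\sqrt{\rho}u_t\|_{L^2}$, $\|(\rho^\gamma)_x\|_{L^2}$ and $(\|\sqrt{\rho}u_t\|_{L^2}+1)\|(\rho^\beta)_x\|_{L^2}$. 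Substituting this and $(\ref{c18})$ back into the identity above and applying Cauchy's inequality produces a differential inequality of the form
\begin{equation}\nonumber
\frac{d}{dt}\int_\R \rho_x^2\,dx \le C(\|\sqrt{\rho}u_t\|_{L^2}^2+1)\int_\R \big(\rho_x^2+(\rho^\gamma)_x^2+(\rho^\beta)_x^2\big)\,dx + C(\|\sqrt{\rho}u_t\|_{L^2}^2+1).
\end{equation}

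The crucial observation is that $\rho^\gamma$ and $\rho^\beta$ obey transport equations of exactly the same structure as $\rho$, namely $(\rho^\gamma)_t+u(\rho^\gamma)_x+\gamma\rho^\gamma u_x=0$ and $(\rho^\beta)_t+u(\rho^\beta)_x+\beta\rho^\beta u_x=0$. Running the same differentiate--multiply--integrate argument on each gives identically structured inequalities for $\int_\R(\rho^\gamma)_x^2\,dx$ and $\int_\R(\rho^\beta)_x^2\,dx$, whose right-hand sides are again controlled by the sum of all three gradient norms. Adding the three inequalities yields a single inequality for $y(t):=\int_\R(\rho_x^2+(\rho^\gamma)_x^2+(\rho^\beta)_x^2)\,dx$ of the schematic form $y'(t)\le C(\|\sqrt{\rho}u_t\|_{L^2}^2+1)\,(y(t)+1)$. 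Because Lemma \ref{lemmac3} supplies $\int_0^T\|\sqrt{\rho}u_t\|_{L^2}^2\,dt\le C(T)$, the Gronwall coefficient is integrable in $t$, and Gronwall's inequality closes the estimate to $y(t)\le C(T)$, which is the desired conclusion.

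I expect the main obstacle to be that the density gradient cannot be estimated in isolation: controlling $\|u_{xx}\|_{L^2}$ forces $\|(\rho^\gamma)_x\|_{L^2}$ and $\|(\rho^\beta)_x\|_{L^2}$ onto the right-hand side, so the three quantities are genuinely coupled and must be propagated together as one system. The device that resolves this is precisely that all three powers of $\rho$ share the same transport structure, which lets the coupled inequalities be summed and closed at once; the time-integrability of $\|\sqrt{\rho}u_t\|_{L^2}^2$ from the preceding lemma is what upgrades the Gronwall argument from a short-time to a genuinely finite-in-$T$ bound.
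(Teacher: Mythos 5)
Your proposal is correct and follows essentially the same route as the paper: the same differentiated-transport identity with the $-\tfrac{3}{2}\int u_x\rho_x^2$ term, the same elliptic bound for $\|u_{xx}\|_{L^2}$ from the momentum equation (which is exactly \eqref{c23}), the same observation that $\rho^\gamma$ and $\rho^\beta$ satisfy transport equations of identical structure so the three gradient norms must be propagated as a coupled system, and the same Gronwall closure using $\int_0^T\|\sqrt{\rho}u_t\|_{L^2}^2\,dt\leq C(T)$ from Lemma \ref{lemmac3}. No gaps.
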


\subsection{{Weighted energy estimates}}\ \\
In this subsection, we will establish the weighted energy estimates.
These will be used in estimates of the higher derivatives of the
velocity.
\begin{Lemma}\label{lemmac5}
Let $(\rho,u)$  be the smooth solution to $(\ref{a1})-(\ref{a4})$.
Then for any $T>0$ and $\alpha>0$, it holds
$$\int_{\R}\rho |u|^{\alpha+2}dx+\int_{0}^{T}\int_{\R}\mu(\rho)u_{x}^{2}|u|^{\alpha}dxdt\leq C(T).$$
\begin{proof}
For any $\alpha>0$, multiplying  $(\ref{a1})_{2}$ by
$(\alpha+2)|u|^{\alpha}u$ and integrating with respect to $x$ over
$\R$ yields that
\begin{equation}\nonumber
\begin{aligned}
\frac{d}{dt}&\int_{\R}\rho |u|^{\alpha+2}dx+(\alpha+2)(\alpha+1)\int_{\R}\mu(\rho)|u|^{\alpha}u_{x}^{2}dx\\
&=\int_{\R}\rho_{t}|u|^{\alpha+2}dx-(\alpha+2)\int_{\R}\rho |u|^{\alpha+2}u_{x}dx-\int_{\R}(\rho^{\gamma})_{x}(\alpha+2)|u|^{\alpha}udx\\
&=(\alpha+2)(\alpha+1)\int_{\R}\rho^{\gamma}|u|^{\alpha}u_{x}dx\\
&=(\alpha+2)(\alpha+1)\int_{\R}(|u|^{\frac{\alpha}{2}}u_{x})(\rho |u|^{\alpha+2})^{\frac{\alpha}{2(\alpha+2)}}\rho^{\gamma-\frac{\alpha}{2(\alpha+2)}}dx\\
&\leq \varepsilon\int_{\R}|u|^{\alpha}u_{x}^{2}dx+C\int_{\R}\rho
|u|^{\alpha+2}dx+C.
\end{aligned}
\end{equation}
Use the Gronwall inequality to get
$$\int_{\R}\rho |u|^{\alpha+2}dx+\int_{0}^{T}\int_{\R}\mu(\rho)u_{x}^{2}|u|^{\alpha}dxdt\leq C(T).$$
The proof of the lemma is completed.
\end{proof}
\end{Lemma}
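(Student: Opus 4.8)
The plan is to run a weighted energy argument: test the momentum equation $(\ref{c7})$, namely $\rho u_t+\rho uu_x+(\rho^\gamma)_x=(\mu(\rho)u_x)_x$, against the weight $(\alpha+2)|u|^\alpha u$ and integrate over $\R$. First I would note the pointwise identity $\partial_t|u|^{\alpha+2}=(\alpha+2)|u|^\alpha u\,u_t$, which together with the mass equation $\rho_t=-(\rho u)_x$ lets the inertial contribution be reorganized into $\frac{d}{dt}\int_\R\rho|u|^{\alpha+2}dx$. The clean part of the argument is a cancellation: integrating $\int_\R\rho_t|u|^{\alpha+2}dx$ by parts produces exactly $(\alpha+2)\int_\R\rho u^2|u|^\alpha u_x\,dx$, which annihilates the convective term $(\alpha+2)\int_\R\rho u^2 u_x|u|^\alpha\,dx$, so no cubic-in-$u$ remainder survives.

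Next I would integrate the viscous and pressure terms by parts, using $(|u|^\alpha u)_x=(\alpha+1)|u|^\alpha u_x$. The viscous term yields the good dissipation $(\alpha+2)(\alpha+1)\int_\R\mu(\rho)u_x^2|u|^\alpha\,dx$ on the left, while the pressure term becomes $(\alpha+2)(\alpha+1)\int_\R\rho^\gamma|u|^\alpha u_x\,dx$, leaving the identity
$$\frac{d}{dt}\int_\R\rho|u|^{\alpha+2}dx+(\alpha+2)(\alpha+1)\int_\R\mu(\rho)u_x^2|u|^\alpha\,dx=(\alpha+2)(\alpha+1)\int_\R\rho^\gamma|u|^\alpha u_x\,dx.$$

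The hard part is controlling the single pressure term, and the obstacle is structural: we have no independent $L^p$ bound on $u$, so the weight $|u|^\alpha$ must be absorbed, not estimated. The plan is to split the integrand as $\rho^\gamma|u|^\alpha u_x=(|u|^{\alpha/2}u_x)\,(\rho|u|^{\alpha+2})^{\frac{\alpha}{2(\alpha+2)}}\,\rho^{\gamma-\frac{\alpha}{2(\alpha+2)}}$, so that the first factor feeds the dissipation, the second regenerates the Gronwall quantity $\rho|u|^{\alpha+2}$, and the third is a pure density power. A Young inequality (first separating off $|u|^{\alpha/2}u_x$, then applying the conjugate exponents $\frac{\alpha+2}{\alpha},\frac{\alpha+2}{2}$ to the rest) bounds this by $\varepsilon\int_\R|u|^\alpha u_x^2\,dx+C\int_\R\rho|u|^{\alpha+2}dx+C\int_\R\rho^{\gamma(\alpha+2)-\alpha/2}dx$. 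Since $\gamma>1$ makes the exponent $\gamma(\alpha+2)-\alpha/2>1$, the bounds $0\le\rho\le C$ (Lemma \ref{lemmac2}) and $\rho\in L^1$ give $\rho^{\gamma(\alpha+2)-\alpha/2}\le C\rho\in L^1$, so the last integral is $\le C$.

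Finally, using $\mu(\rho)\ge1$ to absorb the $\varepsilon$-term into the dissipation, I would arrive at $\frac{d}{dt}\int_\R\rho|u|^{\alpha+2}dx+c\int_\R\mu(\rho)u_x^2|u|^\alpha\,dx\le C\int_\R\rho|u|^{\alpha+2}dx+C$, and Gronwall's inequality on $[0,T]$ yields the pointwise-in-time bound on $\int_\R\rho|u|^{\alpha+2}dx$ and, after integrating in time, the claimed dissipation bound. I expect the only delicate point to be choosing the three-factor splitting that simultaneously produces the dissipation and the Gronwall term while keeping the residual density power integrable.
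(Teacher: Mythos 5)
Your proposal is correct and follows essentially the same route as the paper: the same cancellation of the $\rho_t$ and convective terms, the same three-factor splitting $(|u|^{\alpha/2}u_x)(\rho|u|^{\alpha+2})^{\frac{\alpha}{2(\alpha+2)}}\rho^{\gamma-\frac{\alpha}{2(\alpha+2)}}$ of the pressure term, Young's inequality, and Gronwall. You merely spell out the conjugate exponents and the integrability of $\rho^{\gamma(\alpha+2)-\alpha/2}$ more explicitly than the paper does.
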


\begin{Lemma}\label{lemmac6}
Let $(\rho,u)$  be the smooth solution to $(\ref{a1})-(\ref{a4})$.
Then for any $T>0$ and
$2<\alpha<1+\frac{2}{\sqrt[3]{1+\sqrt[3]{4}}}$, it holds
$$\int_{\R}|x|^{\alpha}(\rho u^{2}+\rho^{\gamma}+\rho^{\beta})dx+\int_{0}^{T}\int_{\R}|x|^{\alpha}\mu(\rho)u_{x}^{2}dxdt\leq C(T).$$
\begin{proof}
Multiplying  $(\ref{a1})_{2}$ by $|x|^{\alpha}u$ and integrating
with respect to $x$ over $\R$ yields that
\begin{equation}\label{c26}
\begin{aligned}
\frac{d}{dt}&\int_{\R}|x|^{\alpha}(\frac{1}{2}\rho u^{2}+\frac{1}{\gamma-1}\rho^{\gamma}+\rho^{\beta})dx+ \int_{\R}\mu(\rho)|x|^{\alpha}u_{x}^{2}dx\\
&= \frac{1}{2}\int_{\R}\alpha \rho u^{3}|x|^{\alpha-2}xdx+\frac{\gamma\alpha}{\gamma-1}\int_{\R}\rho^{\gamma}|x|^{\alpha-2}xudx-\alpha\int_{\R}\mu(\rho)|x|^{\alpha-2}xuu_{x}dx\\
&-(\beta-1)\int_{\R}|x|^{\alpha}\rho^{\beta}
u_{x}dx+\alpha\int_{\R}\rho^{\beta}|x|^{\alpha-2}xudx\\
&\equiv\Sigma_{i=1}^{5}J_{i}.
\end{aligned}
\end{equation}
The terms $J_{i}\ (i=1,2\cdot\cdot\cdot5)$ on the right side of
$(\ref{c26})$ are estimated as follows. By the H\"{o}lder
inequality, Caffarelli-Kohn-Nirenberg weighted inequality and Young
inequality, it holds that
\begin{equation}\label{c27}
\begin{aligned}
J_{1}&\leq C\int_{\R}\rho u^{3}|x|^{\alpha-1}dx=C\int_{\R}(\rho u^{2}|x|^{\alpha})^{\frac{\alpha-1}{\alpha}}\rho^{\frac{1}{\alpha}}u^{3-\frac{2(\alpha-1)}{\alpha}}dx\\
&=C\int_{\R}(\rho u^{2}|x|^{\alpha})^{\frac{\alpha-1}{\alpha}}(\rho u^{\alpha+2})^{\frac{1}{\alpha}}dx\\
&\leq \||x|^{\frac{\alpha}{2}}\sqrt{\rho}u\|_{L^{2}}^{\frac{2(\alpha-1)}{\alpha}}\|\sqrt{\rho}u^{\frac{\alpha}{2}+1}\|_{L^{2}}^{\frac{2}{\alpha}}\\
&\leq C\int_{\R}|x|^{\alpha}\rho u^{2}dx+C(T),
\end{aligned}
\end{equation}
$J_{2}$ can be estimated as
\begin{equation}\label{c28}
\begin{aligned}
J_{2}&\leq C\int_{\R}\rho^{\gamma}|u||x|^{\alpha-1}dx=C\int_{\R}(\sqrt{\rho^{\gamma}}|x|^{\frac{\alpha}{2}})(|u||x|^{\frac{\alpha}{2}-1})\sqrt{\rho^{\gamma}}dx\\
&\leq C\biggl(\int_{\R}\rho^{\gamma}|x|^{\alpha}dx\biggl)^{\frac{1}{2}}\parallel\rho\parallel_{L^{\infty}}^{\frac{\gamma}{2}}\parallel|u||x|^{\frac{\alpha}{2}-1}\parallel_{L^{2}}\\
&\leq C\biggl(\int_{\R}\rho^{\gamma}|x|^{\alpha}dx\biggl)^{\frac{1}{2}}\parallel|u_{x}||x|^{\frac{\alpha}{2}}\|_{L^{2}}\\
&\leq
\varepsilon\int_{\R}\mu(\rho)|x|^{\alpha}u_{x}^{2}dx+C\int_{\R}\rho^{\gamma}|x|^{\alpha}dx,
\end{aligned}
\end{equation}
where the index $\alpha$  satisfies
\begin{equation}\nonumber
\frac{1}{2}+\frac{\frac{\alpha}{2}-1}{1}>0\Longrightarrow\alpha>1.
\end{equation}
$J_3$ can be rewritten as
\begin{equation}\label{c29}
\begin{aligned}
J_{3}&=-\alpha\int_{\R}\mu(\rho)|x|^{\alpha-2}xuu_{x}dx\\
&=-\alpha\int_{\R}|x|^{\alpha-2}xuu_{x}dx-\alpha\int_{\R}\rho^{\beta}|x|^{\alpha-2}xuu_{x}dx\\
&\equiv J_{31}+J_{32}.
\end{aligned}
\end{equation}
Direct estimates give
\begin{equation}\label{c30}
\begin{aligned}
J_{31}&=-\alpha\int_{\R}|x|^{\alpha-2}xuu_{x}dx=-\alpha\int_{\R}\frac{1}{2}|x|^{\alpha-2}x(u^{2})_{x}dx\\
&=\frac{\alpha}{2}\int_{\R}u^{2}\biggl((\alpha-2)|x|^{\alpha-3}\frac{x}{|x|}x+|x|^{\alpha-2}\biggl)dx\\
&=\frac{\alpha}{2}\int_{\R}u^{2}\biggl((\alpha-1)|x|^{\alpha-2}\biggl)dx\leq\frac{\alpha(\alpha-1)}{2}\parallel|x|^{\frac{\alpha}{2}-1}u\parallel_{L^{2}}^{2}\\
&\leq \frac{\alpha(\alpha-1)^{3}}{8}\parallel|x|^{\frac{\alpha}{2}}u_{x}\parallel_{L^{2}}^{2}.
\end{aligned}
\end{equation}
The weight index $\alpha>0$ will be chosen to satisfy
\begin{equation}\label{a1+}
\frac{\alpha(\alpha-1)^{3}}{8}<1.
\end{equation}
$J_{32}$ is estimated as
\begin{equation}\label{c31}
\begin{aligned}
J_{32}&\leq
C\int_{\R}\rho^{\beta}|x|^{\alpha-1}|u||u_{x}|dx=C\int_{\R}(|x|^{\frac{\alpha}{3}-1}|u|)(|x|^{\frac{\alpha}{2}}|u_{x}|)(\rho^{\beta}|x|^{\alpha})
^{\frac{1}{6}}\rho^{\frac{5\beta}{6}}dx\\
&\leq C\parallel|x|^{\frac{\alpha}{3}-1}u\parallel_{L^{3}}\parallel|x|^{\frac{\alpha}{2}}u_{x}\parallel_{L^{2}}\parallel\rho^{\beta}|x|^{\alpha}\parallel_{L^{1}}
^{\frac{1}{6}}\\
&\leq C\parallel u_{x}\parallel_{L^{2}}^{1-\theta}\parallel|x|^{\frac{\alpha}{2}}u_{x}\parallel_{L^{2}}^{\theta}\parallel|x|^{\frac{\alpha}{2}}u_{x}\parallel_{L^{2}}\parallel\rho^{\beta}|x|^{\alpha}\parallel_{L^{1}}^{\frac{1}{6}}\\
&\leq C\parallel|x|^{\frac{\alpha}{2}}u_{x}\parallel_{L^{2}}^{1+\theta}\parallel\rho^{\beta}|x|^{\alpha}\parallel_{L^{1}}^{\frac{1}{6}}\\
&\leq
\varepsilon\parallel|x|^{\frac{\alpha}{2}}u_{x}\parallel_{L^{2}}^{2}+C\parallel\rho^{\beta}|x|^{\alpha}\parallel_{L^{1}}+C(T),
\end{aligned}
\end{equation}
where  $\theta\in(0,1)$ and $\alpha>1$ are chosen to satisfy
$$\frac{1}{3}+\frac{\alpha}{3}-1=(\frac{1}{2}-1)(1-\theta)+(\frac{1}{2}+\frac{\alpha}{2}-1)\theta,\  \ \frac{1}{3}+\frac{\alpha}{3}-1>0,$$
which implies
\begin{equation}\label{a3+}
\theta=\frac{2\alpha-1}{3\alpha}<\frac{2}{3},\ \alpha>2.
\end{equation}
By \eqref{a1+} and \eqref{a3+}, we first choose $\alpha$ as
\begin{equation}\nonumber
(\alpha-1)^{3}<\frac{8}{\alpha}<4\Longrightarrow\alpha<1+\sqrt[3]{4}.
\end{equation}
Then, to guarantee \eqref{a1+}, we impose
$$\frac{\alpha(\alpha-1)^{3}}{8}<\frac{(1+\sqrt[3]{4})(\alpha-1)^{3}}{8}<1,$$
which implies that
\begin{equation}\label{a5+}
(\alpha-1)^{3}<\frac{8}{1+\sqrt[3]{4}}\Longrightarrow\alpha<1+\frac{2}{\sqrt[3]{1+\sqrt[3]{4}}}.
\end{equation}
 Combining \eqref{a3+} and \eqref{a5+}, the index $\alpha$ is
 chosen to satisfy
\begin{equation}\label{a6+}
2<\alpha<1+\frac{2}{\sqrt[3]{1+\sqrt[3]{4}}}. \end{equation}
Concerning $J_4$ and $J_5$, we have
\begin{equation}\label{c32}
\begin{aligned}
J_{4}+J_{5}&\leq C\int_{\R}|x|^{\alpha}\rho^{\beta}|u_{x}|dx+C\int_{\R}\rho^{\beta}|x|^{\alpha-1}|u|dx\\
&\leq
C\biggl(\int_{\R}|x|^{\alpha}\rho^{\beta}dx\biggl)^{\frac{1}{2}}\parallel|x|^{\frac{\alpha}{2}}u_{x}\parallel_{L^{2}}+C\biggl(\int_{\R}|x|^{\alpha}\rho^{\beta}dx\biggl)^{\frac{1}{2}}
\parallel|x|^{\frac{\alpha}{2}-1}u\parallel_{L^{2}}\\
&\leq
\varepsilon\parallel|x|^{\frac{\alpha}{2}}u_{x}\parallel_{L^{2}}^{2}+C\int_{\R}|x|^{\alpha}\rho^{\beta}dx.
\end{aligned}
\end{equation}
Substituting $(\ref{c27})-(\ref{c32})$ into\ $(\ref{c26})$ and
applying the Gronwall inequality lead to
$$\int_{\R}|x|^{\alpha}(\rho u^{2}+\rho^{\gamma}+\rho^{\beta})dx+\int_{0}^{T}\int_{\R}|x|^{\alpha}\mu(\rho)u_{x}^{2}dxdt\leq C(T).$$
The proof of the lemma is completed.
\end{proof}
\end{Lemma}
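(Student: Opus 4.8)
The plan is to run a weighted energy estimate by testing the momentum equation $(\ref{a1})_2$ against $|x|^{\alpha}u$ and integrating over $\R$. First I would reorganize the left-hand side so that the inertial and pressure contributions assemble into the time derivative of the weighted energy $\int_{\R}|x|^{\alpha}\big(\tfrac12\rho u^2+\tfrac{1}{\gamma-1}\rho^{\gamma}+\rho^{\beta}\big)dx$ together with the full dissipation $\int_{\R}\mu(\rho)|x|^{\alpha}u_x^2\,dx$. Concretely, I would use the mass equation $\rho_t=-(\rho u)_x$ to rewrite $\int\rho u_t|x|^{\alpha}u+\int\rho uu_x|x|^{\alpha}u$ as a time derivative of the kinetic part (modulo a $|x|^{\alpha-2}x$ remainder), and the transport identities $(\rho^{\gamma})_t+u(\rho^{\gamma})_x+\gamma\rho^{\gamma}u_x=0$ and $(\rho^{\beta})_t+u(\rho^{\beta})_x+\beta\rho^{\beta}u_x=0$ to absorb the pressure work $\int(\rho^{\gamma})_x|x|^{\alpha}u$ and to produce the $\rho^{\beta}$ piece of the energy. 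What survives on the right-hand side is exactly the collection of terms in which the weight itself is differentiated, i.e. those carrying a factor $|x|^{\alpha-2}x$, plus one first-order-in-$u_x$ term from the $\rho^{\beta}$-transport; these are the five quantities $J_1,\dots,J_5$ in $(\ref{c26})$.

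The next step is to bound each $J_i$ and either absorb it into $\int\mu|x|^{\alpha}u_x^2$ or control it by the weighted energy so that Gronwall closes the estimate. For $J_1=\tfrac{\alpha}{2}\int\rho u^3|x|^{\alpha-2}x\,dx$ I would split $\rho u^3|x|^{\alpha-1}=(\rho u^2|x|^{\alpha})^{\frac{\alpha-1}{\alpha}}(\rho|u|^{\alpha+2})^{\frac{1}{\alpha}}$ and invoke H\"older together with the $L^{\alpha+2}$ bound from Lemma \ref{lemmac5}. For $J_2,J_4,J_5$ (the pressure- and $\rho^{\beta}$-type weight terms) I would combine H\"older, the uniform density bound of Lemma \ref{lemmac2}, and the Caffarelli--Kohn--Nirenberg inequality \eqref{b3} to trade $\||x|^{\frac{\alpha}{2}-1}u\|_{L^2}$ for $\||x|^{\frac{\alpha}{2}}u_x\|_{L^2}$, then use Young's inequality to push a small multiple of the dissipation to the left. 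The genuinely delicate terms are the two pieces $J_{31},J_{32}$ of the viscous weight term $J_3=-\alpha\int\mu(\rho)|x|^{\alpha-2}xuu_x\,dx$.

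The heart of the argument, and the main obstacle, is the constant-viscosity piece $J_{31}=-\alpha\int|x|^{\alpha-2}xuu_x\,dx$. Integrating by parts and using $\frac{d}{dx}(|x|^{\alpha-2}x)=(\alpha-1)|x|^{\alpha-2}$ reduces it to $\frac{\alpha(\alpha-1)}{2}\||x|^{\frac{\alpha}{2}-1}u\|_{L^2}^2$, and here one must use the \emph{sharp} constant in \eqref{b3}: taking $a=\frac{\alpha}{2}$, $b=\frac{\alpha}{2}-1$ gives best constant $\frac{\alpha-1}{2}$, so $\||x|^{\frac{\alpha}{2}-1}u\|_{L^2}\le\frac{\alpha-1}{2}\||x|^{\frac{\alpha}{2}}u_x\|_{L^2}$ and hence $J_{31}\le\frac{\alpha(\alpha-1)^3}{8}\||x|^{\frac{\alpha}{2}}u_x\|_{L^2}^2$. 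Since $\mu(\rho)\ge1$, the dissipation only controls $\||x|^{\frac{\alpha}{2}}u_x\|_{L^2}^2$ with coefficient $1$, so $J_{31}$ is absorbable only if $\frac{\alpha(\alpha-1)^3}{8}<1$, which is what forces the upper cutoff on $\alpha$. The lower bound $\alpha>2$ is instead dictated by $J_{32}$: estimating it through the interpolation form of the CKN inequality requires the exponent $\theta=\frac{2\alpha-1}{3\alpha}$ to lie in $(0,1)$ together with $\frac13+\frac{\alpha}{3}-1>0$, i.e. $\alpha>2$. Optimizing the absorption constant as in \eqref{a1+}--\eqref{a5+} then pins down the precise window $2<\alpha<1+\frac{2}{\sqrt[3]{1+\sqrt[3]{4}}}$. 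Once every $J_i$ has been brought to the form $\varepsilon\||x|^{\frac{\alpha}{2}}u_x\|_{L^2}^2+C\,(\text{weighted energy})+C(T)$, I would absorb the $\varepsilon$-terms into the dissipation, integrate in time, and conclude by Gronwall's inequality.
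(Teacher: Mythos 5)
Your proposal reproduces the paper's argument essentially step for step: the same weighted energy identity \eqref{c26} with the five terms $J_1,\dots,J_5$, the same H\"older splitting of $J_1$ against Lemma \ref{lemmac5}, the same use of the sharp Caffarelli--Kohn--Nirenberg constant $\frac{\alpha-1}{2}$ to absorb $J_{31}$ under the condition $\frac{\alpha(\alpha-1)^3}{8}<1$, and the same interpolation exponent $\theta=\frac{2\alpha-1}{3\alpha}$ for $J_{32}$ forcing $\alpha>2$, closed by Gronwall. This matches the paper's proof, including the identification of $J_{31}$ as the term that dictates the upper cutoff on $\alpha$.
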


\subsection{Estimates of the higher order derivatives}
\begin{Lemma}\label{lemmac7}
Let $(\rho,u)$  be the smooth solution to $(\ref{a1})-(\ref{a4})$.
Then for any $T>0$, it holds
$$\int_{\R}\rho (u_{t})^{2}dx+\int_{0}^{T}\int_{\R}\mu(\rho)(u_{tx})^{2}dxdt+\int_{0}^{T}\int_{\R}\biggl(\rho_{t}^{2}+(\rho^{\gamma})_{t}^{2}
+(\rho^{\beta})_{t}^{2}\biggl)dxdt\leq C(T).$$
\begin{proof}
Differentiating $(\ref{a1})_{2}$ with respect to $t$, we have
\begin{equation}\label{c33}
\rho u_{tt}+\rho_{t}u_{t}+\rho_{t}uu_{x}+\rho u_{t}u_{x}+\rho uu_{xt}+(\rho^{\gamma})_{xt}=(\mu(\rho)u_{xt}+(\rho^{\beta})_{t}u_{x})_{x}.
\end{equation}
Multiplying on both sides of $(\ref{c33})$ by $u_{t}$, integrating
over $\R$ and using $(\ref{a1})_{1}$, we have
\begin{equation}\label{c34}
\begin{aligned}
\frac{1}{2}\frac{d}{dt}&\int_{\R}\rho u_{t}^{2}dx+\int_{\R}\mu(\rho)u_{tx}^{2}dx=-\int_{\R}\rho_{t}uu_{x}u_{t}dx-\int_{\R}\rho u_{t}^{2}u_{x}dx-\int_{\R}(\rho^{\gamma})_{tx}u_{t}dx\\
&-\int_{\R}(\rho^{\beta})_{t}u_{x}u_{xt}dx-2\int_{\R}\rho
uu_{t}u_{xt}dx\equiv K_{1}+K_{2}+K_{3}+K_{4}+K_{5}.
\end{aligned}
\end{equation}
Now we estimate the  terms $K_1-K_5$ as follows
\begin{equation}\label{c35}
\begin{aligned}
K_{1}&+K_{2}=-\int_{\R}\rho_{t}uu_{x}u_{t}dx-\int_{\R}\rho u_{t}^{2}u_{x}dx=\int_{\R}(\rho u)_{x}(uu_{x}u_{t})dx-\int_{\R}\rho u_{t}^{2}u_{x}dx\\
&=-\int_{\R}\rho u(u_{x}^{2}u_{t}+uu_{xx}u_{t}+uu_{x}u_{xt})dx-\int_{\R}\rho u_{t}^{2}u_{x}dx\\
&\leq
\int_{\R}(\sqrt{\rho}|u_{t}|)(\sqrt{\rho}|u|)|u_{x}|^{2}dx+\int_{\R}(\sqrt{\rho}|u_{t}|)(\sqrt{\rho}u^{2})u_{xx}dx+\int_{\R}(\sqrt{\rho}u)u_{xt}(\sqrt{\rho}uu_{x})dx
\\
&\ \ \ \ \ \ +\|u_{x}\|_{L^{\infty}}\int_{\R}\rho u_{t}^{2}dx\\
&\leq C\|u_{x}\|_{L^{\infty}}^{2}\|\sqrt{\rho}u_{t}\|_{L^{2}}\|\sqrt{\rho}u\|_{L^{2}}+C\|u\|_{L^{\infty}}^{2}\|\sqrt{\rho}u_{t}\|_{L^{2}}\|u_{xx}\|_{L^{2}}\\
&\ \ \ \ \ \ +C\|u\|_{L^{\infty}}
\|u_{x}\|_{L^{\infty}}\|u_{tx}\|_{L^{2}}\|\sqrt{\rho}u\|_{L^{2}}+C\|u_{x}\|_{L^{\infty}}\int_{\R}\rho u_{t}^{2}dx\\
&\leq C(\|\sqrt{\rho}u_{t}\|_{L^{2}}+1)^{2}\|\sqrt{\rho}u_{t}\|_{L^{2}}+C(\||x|^{\frac{\alpha}{2}}u_{x}\|_{L^{2}}+1)^{2}\|\sqrt{\rho}u_{t}\|_{L^{2}}(\|\sqrt{\rho}u_{t}\|_{L^{2}}+1)\\
&\ \ \ \ \ \ +C(\|\sqrt{\rho}u_{t}\|_{L^{2}}+1)(\||x|^{\frac{\alpha}{2}}u_{x}\|_{L^{2}}+1)\|u_{xt}\|_{L^{2}}+C(\|\sqrt{\rho}u_{t}\|_{L^{2}}+1)\int_{\R}\rho u_{t}^{2}dx\\
&\leq
\varepsilon\int_{\R}\mu(\rho)u_{xt}^{2}dx+C(\||x|^{\frac{\alpha}{2}}u_{x}\|_{L^{2}}^{2}+\int_{\R}\rho
u_{t}^{2}dx+1)\int_{\R}\rho
u_{t}^{2}dx+C(\||x|^{\frac{\alpha}{2}}u_{x}\|_{L^{2}}^{2}+1),
\end{aligned}
\end{equation}
where we used the fact
$$\|u_{x}\|_{L^{\infty}}+\|u_{xx}\|_{L^{2}}\leq C(\|\sqrt{\rho}u_{t}\|_{L^{2}}+1),$$
which follows from $(\ref{c18})$ and $(\ref{c23})$.\\ Note that
\begin{equation}\label{95-1}
\begin{aligned}
\|u\|_{L^{\infty}}&\leq \|u\|_{L^{\frac{2}{\alpha-1}}}^{\frac{1}{\alpha}}\|u_{x}\|_{L^{2}}^{1-\frac{1}{\alpha}}\leq C(\|u\|_{L^{\frac{2}{\alpha-1}}}+\|u_{x}\|_{L^{2}})\\
&\leq C(\||x|^{\frac{\alpha}{2}}u_{x}\|_{L^{2}}+1).
\end{aligned}
\end{equation}
Using  $(\ref{a1})_{1}$, we have
\begin{equation}\label{c36}
\begin{aligned}
\|\rho_{t}\|_{L^{2}}\leq C(\|\rho u_{x}\|_{L^{2}}+\|\rho_{x}u\|_{L^{2}})\leq C(1+\|u\|_{L^{\infty}})\leq C(1+\||x|^{\frac{\alpha}{2}}u_{x}\|_{L^{2}}).
\end{aligned}
\end{equation}
Thanks to Lemma \ref{lemmac6}, we have
$$\int_{0}^{T}\int_{\R}\biggl(\rho_{t}^{2}\biggl)dxdt\leq
C(T).$$\\
Similarly, we can obtain
$$\int_{0}^{T}\int_{\R}\biggl(\rho_{t}^{2}+(\rho^{\gamma})_{t}^{2}+(\rho^{\beta})_{t}^{2}\biggl)dxdt\leq
C(T).$$\\
Integration by parts implies
\begin{equation}\label{c37}
\begin{aligned}
K_{3}&=-\int_{\R}(\rho^{\gamma})_{tx}u_{t}dx=\int_{\R}(\rho^{\gamma})_{t}u_{tx}dx=\int_{\R}\gamma\rho^{\gamma-1}\rho_{t}u_{tx}dx\\
&\leq C\|\rho_{t}\|_{L^{2}}\|u_{tx}\|_{L^{2}}\leq
\varepsilon\|u_{tx}\|_{L^{2}}^{2}+C\|\rho_{t}\|_{L^{2}}^{2}.
\end{aligned}
\end{equation}
\begin{equation}\label{c38}
\begin{aligned}
K_{4}&=-\int_{\R}(\rho^{\beta})_{t}u_{x}u_{xt}dx\leq C\|u_{x}\|_{L^{\infty}}\|(\rho^{\beta})_{t}\|_{L^{2}}\|u_{xt}\|_{L^{2}}\\
&\leq \varepsilon\|u_{xt}\|_{L^{2}}^{2}+C\|u_{x}\|_{L^{\infty}}^{2}\|(\rho^{\beta})_{t}\|_{L^{2}}^{2}\\
&\leq \varepsilon\|u_{xt}\|_{L^{2}}^{2}+C(\|\sqrt{\rho}u_{t}\|_{L^{2}}^{2}+1)\|(\rho^{\beta})_{t}\|_{L^{2}}^{2}.
\end{aligned}
\end{equation}
$K_5$ is estimated as
\begin{equation}\label{c39}
\begin{aligned}
K_{5}&=-2\int_{\R}\rho uu_{t}u_{xt}dx\leq C\int_{\R}(\sqrt{\rho}u_{t})(\sqrt{\rho}u)u_{xt}dx\leq C\|u\|_{L^{\infty}}\|\sqrt{\rho}u_{t}\|_{L^{2}}\|u_{xt}\|_{L^{2}}\\
&\leq \varepsilon\|u_{xt}\|_{L^{2}}^{2}+C\|u\|_{L^{\infty}}^{2}\int_{\R}\rho u_{t}^{2}dx\\
&\leq
\varepsilon\|u_{xt}\|_{L^{2}}^{2}+C(\||x|^{\frac{\alpha}{2}}u_{x}\|_{L^{2}}^{2}+1)\int_{\R}\rho
u_{t}^{2}dx.
\end{aligned}
\end{equation}
Substituting \eqref{c35}-\eqref{c39}\ into\ \eqref{c34}, integrating
the resulted equation with respect to $t$ over $[0,T]$, using Lemmas
$\ref{lemmac3}$\ and\ $\ref{lemmac6}$ and the Gronwall inequality,
we obtain
$$\int_{\R}\rho u_{t}^{2}dx+\int_{0}^{T}\int_{\R}\mu(\rho)u_{tx}^{2}dxdt\leq C(T).$$
The proof of the lemma  is completed.
\end{proof}
\end{Lemma}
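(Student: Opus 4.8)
The plan is to use the energy method applied to the time-differentiated momentum equation. First I would rewrite $(\ref{a1})_2$ in the nonconservative form $\rho u_t+\rho u u_x+(\rho^\gamma)_x=(\mu(\rho)u_x)_x$ and differentiate it in $t$. Testing the resulting equation with $u_t$ over $\R$ and using the continuity equation $(\ref{a1})_1$ to cancel the $\rho u_{tt}$ contribution produces the basic identity
\begin{equation}\nonumber
\frac12\frac{d}{dt}\int_\R\rho u_t^2\,dx+\int_\R\mu(\rho)u_{tx}^2\,dx=K_1+K_2+K_3+K_4+K_5,
\end{equation}
whose left-hand side carries the two good terms and whose right-hand side consists of five commutator-type integrals. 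The goal is to bound each $K_i$ by $\varepsilon\int_\R\mu(\rho)u_{tx}^2\,dx$ plus quantities controllable by a Gronwall argument; since $\mu(\rho)=1+\rho^\beta\ge1$, each generated $\varepsilon\|u_{tx}\|_{L^2}^2$ can be absorbed into the good dissipation term on the left.

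The key inputs are the uniform density bound $\rho\le C$ from Lemma \ref{lemmac2}, the interpolation bound $\|u_x\|_{L^\infty}+\|u_{xx}\|_{L^2}\le C(\|\sqrt\rho u_t\|_{L^2}+1)$ obtained by combining \eqref{c18} and \eqref{c23}, and a weighted $L^\infty$ estimate $\|u\|_{L^\infty}\le C(\||x|^{\alpha/2}u_x\|_{L^2}+1)$ following from Gagliardo-Nirenberg interpolation together with the weighted energy bound of Lemma \ref{lemmac6}. This last estimate is the crucial substitute for an $L^p$ bound on $u$, which is not available in this setting. With these in hand, the pair $K_1+K_2$ is the hardest to treat: after rewriting $\rho_t=-(\rho u)_x$ and integrating by parts one meets products such as $\rho u\,u_x^2 u_t$, $\rho u^2 u_{xx}u_t$ and $\rho u\,u_x u_{xt}$, each handled by distributing $\sqrt\rho$ factors, applying the above $L^\infty$ bounds, and using Young's inequality. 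The terms $K_3,K_4,K_5$ are more routine: $K_3$ is integrated by parts and rewritten through $(\rho^\gamma)_t=\gamma\rho^{\gamma-1}\rho_t$, while $K_4$ and $K_5$ are estimated directly via $\|u_x\|_{L^\infty}$ and $\|u\|_{L^\infty}$.

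For the density-time terms I would use $(\ref{a1})_1$ to obtain $\|\rho_t\|_{L^2}\le C(1+\|u\|_{L^\infty})\le C(1+\||x|^{\alpha/2}u_x\|_{L^2})$, together with the analogous bounds for $(\rho^\gamma)_t$ and $(\rho^\beta)_t$; integrating in time and invoking Lemma \ref{lemmac6} gives $\int_0^T\int_\R(\rho_t^2+(\rho^\gamma)_t^2+(\rho^\beta)_t^2)\,dx\,dt\le C(T)$. Collecting the estimates, the basic identity reduces to a differential inequality of the form
\begin{equation}\nonumber
\frac{d}{dt}\int_\R\rho u_t^2\,dx+\int_\R\mu(\rho)u_{tx}^2\,dx\le C\Big(\||x|^{\alpha/2}u_x\|_{L^2}^2+\int_\R\rho u_t^2\,dx+1\Big)\int_\R\rho u_t^2\,dx+h(t),
\end{equation}
with $\int_0^T h(t)\,dt\le C(T)$. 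The main obstacle is that $\int_\R\rho u_t^2\,dx$ itself appears inside the coefficient, so the inequality is nonlinear in the quantity being estimated. I would resolve this by treating the whole bracket as a time-dependent coefficient $g(t)$ and noting it is integrable on $[0,T]$: Lemma \ref{lemmac6} bounds $\int_0^T\||x|^{\alpha/2}u_x\|_{L^2}^2\,dt$ and Lemma \ref{lemmac3} bounds $\int_0^T\int_\R\rho u_t^2\,dx\,dt$, so $\int_0^T g(t)\,dt\le C(T)$. Gronwall's inequality then yields both $\sup_{[0,T]}\int_\R\rho u_t^2\,dx\le C(T)$ and $\int_0^T\int_\R\mu(\rho)u_{tx}^2\,dx\,dt\le C(T)$, which completes the proof.
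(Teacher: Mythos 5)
Your proposal is correct and follows essentially the same route as the paper: the same time-differentiated energy identity with the decomposition into $K_1,\dots,K_5$, the same key bounds $\|u_x\|_{L^\infty}+\|u_{xx}\|_{L^2}\leq C(\|\sqrt{\rho}u_t\|_{L^2}+1)$ and $\|u\|_{L^\infty}\leq C(\||x|^{\frac{\alpha}{2}}u_x\|_{L^2}+1)$, the same derivation of the $\rho_t$-type bounds from the continuity equation and Lemma \ref{lemmac6}, and the same Gronwall closure. Your explicit observation that the coefficient multiplying $\int_\R\rho u_t^2\,dx$ is merely time-integrable (via Lemmas \ref{lemmac3} and \ref{lemmac6}) rather than bounded is exactly the point the paper uses implicitly, so nothing is missing.
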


\begin{Lemma}\label{lemmac8}
Let $(\rho,u)$  be the smooth solution to $(\ref{a1})-(\ref{a4})$.
Then for any $T>0$, it holds
$$\|u_{x}\|_{L^{\infty}}(t)+\int_{\R}(u_{xx})^{2}dx\leq C(T).$$
\begin{proof}
Applying $(\ref{c23})$, \ Lemma\ \ref{lemmac4} and Lemma
\ref{lemmac7}, we have
\begin{equation}\nonumber
\|u_{xx}\|_{L^{2}}\leq
C\bigg[\|\sqrt{\rho}u_{t}\|_{L^{2}}+1+\|(\rho^{\gamma})_{x}\|_{L^{2}}
+(\|\sqrt{\rho}u_{t}\|_{L^{2}}+1)\|(\rho^{\beta})_{x}\|_{L^{2}}\bigg]\leq
C(T),
\end{equation}
and
\begin{equation}\nonumber
\|u_{x}\|_{L^{\infty}}\leq
C(\|u_{x}\|_{L^{2}}+\|u_{xx}\|_{L^{2}})\leq C(T).
\end{equation}
Then we complete the proof of the lemma.
\end{proof}
\end{Lemma}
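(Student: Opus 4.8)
The plan is to read this lemma as the pointwise-in-time consummation of the estimates already in hand: the only genuinely new ingredient is the uniform-in-time control of $\|\sqrt{\rho}u_t\|_{L^2}$ coming from Lemma \ref{lemmac7}, after which everything reduces to substitution into the elliptic-type bound \eqref{c23} followed by a Sobolev embedding.

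First I would invoke \eqref{c23}, obtained by rewriting the momentum equation $(\ref{a1})_2$ as $(\mu(\rho)u_x)_x=\rho u_t+\rho uu_x+(\rho^\gamma)_x$ and estimating the second derivative in $L^2$. The important feature of \eqref{c23} is that the a priori dangerous factor $\|u_x\|_{L^\infty}$ has already been absorbed, via \eqref{c18} together with $\|\sqrt{\mu(\rho)}u_x\|_{L^2}\le C$ from Lemma \ref{lemmac3}, leaving the closed form
\begin{equation}\nonumber
\|u_{xx}\|_{L^2}\le C\big[\|\sqrt{\rho}u_t\|_{L^2}+1+\|(\rho^\gamma)_x\|_{L^2}+(\|\sqrt{\rho}u_t\|_{L^2}+1)\|(\rho^\beta)_x\|_{L^2}\big].
\end{equation}
Hence there is no self-referential $L^\infty$ norm to worry about; the right-hand side involves only quantities that have already been controlled.

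Next I would feed in the pointwise-in-time bounds. Lemma \ref{lemmac7} gives $\sup_{0\le t\le T}\int_\R\rho u_t^2\,dx\le C(T)$, i.e. $\|\sqrt{\rho}u_t\|_{L^2}\le C(T)$ for each $t\in[0,T]$; this is precisely the upgrade from the merely $\int_0^T$ control that was available earlier. Lemma \ref{lemmac4} supplies $\|(\rho^\gamma)_x\|_{L^2}+\|(\rho^\beta)_x\|_{L^2}\le C(T)$, again pointwise in $t$. Substituting both into the displayed bound yields $\|u_{xx}\|_{L^2}\le C(T)$ uniformly on $[0,T]$. Finally, the one-dimensional embedding $\|u_x\|_{L^\infty}\le C(\|u_x\|_{L^2}+\|u_{xx}\|_{L^2})$, combined with $\|u_x\|_{L^2}\le C(T)$ from Lemma \ref{lemmac3} and the bound just obtained, gives $\|u_x\|_{L^\infty}(t)\le C(T)$, completing the proof.

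I would flag the one subtle point: the argument is a pure bootstrap and carries no new differential inequality, so it must be carried out strictly after Lemma \ref{lemmac7} has been closed, since it is the time-uniform (rather than time-integrated) bound on $\|\sqrt{\rho}u_t\|_{L^2}$ that converts \eqref{c23} into a pointwise-in-time statement. This ordering is essentially the entire content of the lemma, and no serious obstacle arises beyond it.
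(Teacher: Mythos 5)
Your proposal is correct and follows exactly the paper's own argument: substitute the time-uniform bounds $\|\sqrt{\rho}u_t\|_{L^2}\le C(T)$ from Lemma \ref{lemmac7} and $\|(\rho^\gamma)_x\|_{L^2}+\|(\rho^\beta)_x\|_{L^2}\le C(T)$ from Lemma \ref{lemmac4} into \eqref{c23}, then conclude via the one-dimensional embedding and $\|u_x\|_{L^2}\le C(T)$ from Lemma \ref{lemmac3}. Your remark about the ordering (needing the pointwise-in-time rather than time-integrated control of $\|\sqrt{\rho}u_t\|_{L^2}$) is accurate and is indeed the whole content of the lemma.
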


Inspired by Hoff \cite{hoff95}, we estimate the weighted material
derivative $\dot{u}=(\partial_{t}+u\partial_{x})u$.
\begin{Lemma}\label{lemmac9}
Let $(\rho,u)$  be the smooth solution to $(\ref{a1})-(\ref{a4})$.
Then for any $T>0$ and
$2<\alpha<1+\frac{2}{\sqrt[3]{1+\sqrt[3]{4}}}$, it holds
$$\int_{\R}\rho\dot{u}^{2}|x|^{\alpha}dx+\int_{0}^{T}\int_{\R}(1+|x|^{\alpha})\mu(\rho)\dot{u}_{x}^{2}dxdt+\int_{0}^{T}\|u_{t}\|_{L^{\frac{2}{\alpha-1}}}^{2}dt\leq C(T).$$
\begin{proof}
Firstly, applying $\partial_{t}+\partial_{x}(u\cdot)$ to equation $(\ref{a1})_{2}$ gives
\begin{equation}\label{c41}
\begin{aligned}
\rho \dot{u}_{t}+\rho
u\dot{u}_{x}-(\mu(\rho)\dot{u}_{x})_{x}=(\gamma
pu_{x}-\mu(\rho)u_{x}^{2}-\beta\rho^{\beta}u_{x}^{2})_{x}.
\end{aligned}
\end{equation}
Multiplying on both sides of $(\ref{c41})$ by $|x|^{\alpha}\dot{u}$
and integrating over $\R$, we have
\begin{equation}\label{c42}
\begin{aligned}
&\frac{1}{2}\frac{d}{dt}\int_{\R}\rho \dot{u}^{2}|x|^{\alpha}dx+\int_{\R}\mu(\rho)\dot{u}_{x}^{2}|x|^{\alpha}dx\\
&=\frac{1}{2}\alpha\int_{\R}\rho u|x|^{\alpha-2}x\dot{u}^{2}dx-\int_{\R}\mu(\rho)\dot{u}_{x}\dot{u}\alpha|x|^{\alpha-2}xdx\\
&\ \ \ \ -\int_{\R}(\gamma pu_{x}-\mu(\rho)u_{x}^{2}-\beta\rho^{\beta}u_{x}^{2})(|x|^{\alpha}\dot{u}_{x}+\alpha|x|^{\alpha-2}x\dot{u})dx\\
&\equiv L_{1}+L_{2}+L_{3}.\\
\end{aligned}
\end{equation}
Direct estimates give
\begin{equation}\label{d1}
\begin{aligned}
&L_{1}\leq|\frac{1}{2}\alpha\int_{\R}\rho u|x|^{\alpha-2}x\dot{u}^{2}dx|\leq\frac{\alpha}{2}\int_{\R} \rho|u||x|^{\alpha-1}\dot{u}^{2}dx=\frac{\alpha}{2}\int_{\R}(\rho\dot{u}^{2}|x|^{\alpha})^{\frac{\alpha-1}{\alpha}}(\rho\dot{u}^{2})^{\frac{1}{\alpha}}|u|dx\\
&\leq C\|u\|_{L^{\infty}}\int_{\R} \rho\dot{u}^{2}|x|^{\alpha}dx+C\|u\|_{L^{\infty}}\int_{\R} \rho\dot{u}^{2}dx\\
&\leq C(\||x|^{\frac{\alpha}{2}}u_{x}\|_{L^{2}}+1)\int_{\R}\rho\dot{u}^{2}|x|^{\alpha}dx+C(\||x|^{\frac{\alpha}{2}}u_{x}\|_{L^{2}}+1)(\int_{\R} \rho u_{t}^{2}dx+\int_{\R}\rho u^{2}u_{x}^{2}dx)\\
&\leq C(\||x|^{\frac{\alpha}{2}}u_{x}\|_{L^{2}}+1)\int_{\R}
\rho\dot{u}^{2}|x|^{\alpha}dx+C(\||x|^{\frac{\alpha}{2}}u_{x}\|_{L^{2}}+1),
\end{aligned}
\end{equation}
\begin{equation}\label{d2}
\begin{aligned}
L_{2}&\leq |-\int_{\R}\mu(\rho)\dot{u}_{x}\dot{u}\alpha|x|^{\alpha-2}xdx|\leq|\int_{\R}\dot{u}_{x}\dot{u}\alpha|x|^{\alpha-2}xdx+\int_{\R}\rho^{\beta}\dot{u}_{x}\dot{u}\alpha|x|^{\alpha-2}xdx|\\
&\leq
\frac{\alpha(\alpha-1)}{2}\int_{\R}\dot{u}^{2}|x|^{\alpha-2}dx+C|\int_{\R}(|x|^{\frac{\alpha}{2}}\dot{u}_{x})(|x|^{\frac{\alpha}{3}-1}\dot{u})(\rho^{\beta}|x|^{\alpha})
^{\frac{1}{6}}\rho^{\frac{5\beta}{6}}dx|\\
&\leq \frac{\alpha(\alpha-1)^{3}}{8}\||x|^{\frac{\alpha}{2}}\dot{u}_{x}\|_{L^{2}}^{2}+C\||x|^{\frac{\alpha}{3}-1}\dot{u}\|_{L^{3}}\||x|^{\frac{\alpha}{2}}\dot{u}_{x}\|_{L^{2}}
\|\rho^{\beta}|x|^{\alpha}\|_{L^{1}}\\
&\leq \frac{\alpha(\alpha-1)^{3}}{8}\||x|^{\frac{\alpha}{2}}\dot{u}_{x}\|_{L^{2}}^{2}+C\|\dot{u}_{x}\|_{L^{2}}^{1-\theta}\||x|^{\frac{\alpha}{2}}\dot{u}_{x}\|_{L^{2}}^{1+\theta}\\
&\leq
\frac{\alpha(\alpha-1)^{3}}{8}\||x|^{\frac{\alpha}{2}}\dot{u}_{x}\|_{L^{2}}^{2}+\varepsilon\||x|^{\frac{\alpha}{2}}\dot{u}_{x}\|_{L^{2}}^{2}+C\|\dot{u}_{x}\|_{L^{2}}^{2}.
\end{aligned}
\end{equation}
The index $\alpha$ and $\theta$ in  $(\ref{d2})$ are chosen to
satisfy
$$\frac{1}{3}+\frac{\alpha}{3}-1=(\frac{1}{2}+\frac{-1}{1})(1-\theta)+(\frac{1}{2}+\frac{\frac{\alpha}{2}-1}{1})\theta,$$
which implies $$\theta=\frac{2\alpha-1}{3\alpha}<\frac{2}{3}.$$
Moreover, the restriction \eqref{a6+} guarantees that
 $$\frac{1}{2}+\frac{\frac{\alpha}{2}-1}{1}>0,\ \frac{1}{3}+\frac{\alpha}{3}-1>0, \ \frac{\alpha(\alpha-1)^{3}}{8}<1.$$
Concerning $L_3$, we have
\begin{equation}\label{d3}
\begin{aligned}
L_{3}&=-\int_{\R}(\gamma pu_{x}-\mu(\rho)u_{x}^{2}-\beta\rho^{\beta}u_{x}^{2})(|x|^{\alpha}\dot{u}_{x}+\alpha|x|^{\alpha-2}x\dot{u})dx\\
&\leq C\int_{\R}(\gamma p+\mu(\rho)|u_{x}|+\beta\rho^{\beta}|u_{x}|)(|x|^{\frac{\alpha}{2}}|u_{x}|)(|\dot{u}_{x}||x|^{\frac{\alpha}{2}})dx\\
&\ \ \ +C\int_{\R}(\gamma p+\mu(\rho)|u_{x}|+\beta\rho^{\beta}|u_{x}|)(|x|^{\frac{\alpha}{2}}|u_{x}|)(|\dot{u}||x|^{\frac{\alpha}{2}-1})dx\\
&\leq C\||x|^{\frac{\alpha}{2}}u_{x}\|_{L^{2}}\||x|^{\frac{\alpha}{2}}\dot{u}_{x}\|_{L^{2}}\\
&\leq \varepsilon\||x|^{\frac{\alpha}{2}}\dot{u}_{x}\|_{L^{2}}^{2}+C\||x|^{\frac{\alpha}{2}}u_{x}\|_{L^{2}}^{2}
\end{aligned}
\end{equation}
Since
\begin{equation}\nonumber
\begin{aligned}
\|\dot{u}_{x}\|_{L^{2}}&\leq C\|(u_{t}+uu_{x})_{x}\|_{L^{2}}\leq C\biggl(\|u_{xt}\|_{L^{2}}+\|u_{x}^{2}\|_{L^{2}}+\|uu_{xx}\|_{L^{2}}\biggl)\\
&\leq C\biggl(\|u_{xt}\|_{L^{2}}+\|u\|_{L^{\infty}}+1\biggl)\leq
C\biggl(\|u_{xt}\|_{L^{2}}+\||x|^{\frac{\alpha}{2}}u_{x}\|_{L^{2}}+1\biggl),
\end{aligned}
\end{equation}
by Lemma $\ref{lemmac6}$\ and \ Lemma $\ref{lemmac7}$, we get
\begin{equation}\label{c44}
\begin{aligned}
\int_{0}^{T}\int_{\R}\dot{u}_{x}^{2}dxdt\leq C(T).
\end{aligned}
\end{equation}
Substituting \eqref{d1}-\eqref{d3}\ into\ \eqref{c42} and using
$(\ref{c44})$, Lemma $\ref{lemmac6}$ and the Gronwall inequality, we
derive
$$\int_{\R}\rho\dot{u}^{2}|x|^{\alpha}dx+\int_{0}^{T}\int_{\R}\mu(\rho)\dot{u}_{x}^{2}|x|^{\alpha}dxdt\leq C(T).$$
It follows from  the Caffarelli-Kohn-Nirenberg weighted inequality
that
\begin{equation}\nonumber
\begin{aligned}
\int_{0}^{T}\|u_{t}\|_{L^{\frac{2}{\alpha-1}}}^{2}dt&\leq
\int_{0}^{T}\|\dot{u}-uu_{x}\|_{L^{\frac{2}{\alpha-1}}}^{2}dt\leq \int_{0}^{T}\|\dot{u}\|_{L^{\frac{2}{\alpha-1}}}^{2}dt+\int_{0}^{T}\|u_{x}\|_{L^{\infty}}^{2}\|u\|_{L^{\frac{2}{\alpha-1}}}^{2}dt\\
&\leq
\int_{0}^{T}\||x|^{\frac{\alpha}{2}}\dot{u}_{x}\|_{L^{2}}^{2}dt+C\int_{0}^{T}\||x|^{\frac{\alpha}{2}}{u}_{x}\|_{L^{2}}^{2}dt\leq
C(T).
\end{aligned}
\end{equation}
The proof of the lemma is completed.
\end{proof}
\end{Lemma}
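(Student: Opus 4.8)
The plan is to adapt Hoff's material-derivative argument to the weighted setting, compensating for the complete absence of $L^p$ control on $u$ by means of the Caffarelli-Kohn-Nirenberg inequality. First I would produce an evolution equation for $\dot u$ by applying the operator $\partial_t+\partial_x(u\,\cdot\,)$ to the momentum equation $(\ref{a1})_2$; because this operator annihilates the mass equation, it yields
\begin{equation}\nonumber
\rho\dot u_t+\rho u\dot u_x-(\mu(\rho)\dot u_x)_x=(\gamma p u_x-\mu(\rho)u_x^2-\beta\rho^\beta u_x^2)_x.
\end{equation}
Testing this against $|x|^\alpha\dot u$ and integrating by parts gives the weighted energy identity whose left-hand side carries $\tfrac12\frac{d}{dt}\int_\R\rho\dot u^2|x|^\alpha\,dx$ together with the good dissipation $\int_\R\mu(\rho)|x|^\alpha\dot u_x^2\,dx$, while the right-hand side splits into three source terms: $L_1$ from the convective term, $L_2$ from the interaction of the viscous term with the derivative of the weight, and $L_3$ from the pressure/viscous right-hand side.

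The strategy is to absorb every contribution of $L_1,L_2,L_3$ into the dissipation, modulo a Gronwall-integrable remainder. For $L_1$ I would use $\|u\|_{L^\infty}\le C(\||x|^{\alpha/2}u_x\|_{L^2}+1)$ (see (\ref{95-1})) together with the already-controlled quantities $\int\rho u_t^2\,dx$ and $\int\rho u^2u_x^2\,dx$ from Lemmas \ref{lemmac3} and \ref{lemmac7}, which recasts $L_1$ in the Gronwall form $C(\||x|^{\alpha/2}u_x\|_{L^2}+1)\int\rho\dot u^2|x|^\alpha\,dx$ plus a controlled remainder. For $L_3$ I would bound the prefactor $\gamma p+\mu(\rho)|u_x|+\beta\rho^\beta|u_x|$ (uniformly controlled by $|u_x|$ up to constants thanks to the density bound of Lemma \ref{lemmac2}) and pair $|x|^{\alpha/2}|u_x|$ against $|x|^{\alpha/2}\dot u_x$ and $|x|^{\alpha/2-1}\dot u$ in $L^2$; Young's inequality then absorbs a small multiple of the weighted dissipation and leaves $C\||x|^{\alpha/2}u_x\|_{L^2}^2$, which is time-integrable by Lemma \ref{lemmac6}.

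The genuinely delicate term is $L_2=-\alpha\int_\R\mu(\rho)\dot u_x\dot u|x|^{\alpha-2}x\,dx$. Splitting $\mu=1+\rho^\beta$, the constant part produces $\tfrac{\alpha(\alpha-1)}{2}\int_\R\dot u^2|x|^{\alpha-2}\,dx$ after integrating by parts, and here I would invoke the \emph{best-constant} inequality (\ref{b3}) with $a=\alpha/2,\ b=\alpha/2-1$ (so that $p=2$ and $C_{a,b}=\tfrac{\alpha-1}{2}$) to obtain $\int_\R\dot u^2|x|^{\alpha-2}\,dx\le\tfrac{(\alpha-1)^2}{4}\||x|^{\alpha/2}\dot u_x\|_{L^2}^2$, whence the coefficient $\tfrac{\alpha(\alpha-1)^3}{8}$ appears; the $\rho^\beta$ part is treated by a H\"older splitting using the weighted interpolation exponent $\theta=\tfrac{2\alpha-1}{3\alpha}<\tfrac23$, giving $\varepsilon\||x|^{\alpha/2}\dot u_x\|_{L^2}^2+C\|\dot u_x\|_{L^2}^2$. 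The main obstacle is exactly this absorption: the factor $\tfrac{\alpha(\alpha-1)^3}{8}$ multiplying the weighted dissipation must be strictly smaller than $1$, which is precisely the constraint forcing the upper bound $\alpha<1+\tfrac{2}{\sqrt[3]{1+\sqrt[3]{4}}}$ already fixed in (\ref{a6+}), while $\alpha>2$ is what the positivity and exponent conditions in the $\rho^\beta$ estimate demand. To close, I would first derive the unweighted bound $\int_0^T\|\dot u_x\|_{L^2}^2\,dt\le C(T)$ from $\|\dot u_x\|_{L^2}\le C(\|u_{xt}\|_{L^2}+\||x|^{\alpha/2}u_x\|_{L^2}+1)$ together with Lemmas \ref{lemmac6} and \ref{lemmac7}, substitute it into the weighted identity, and apply Gronwall's inequality to obtain the first two asserted bounds. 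The final $L^{2/(\alpha-1)}$ estimate on $u_t$ then follows by writing $u_t=\dot u-uu_x$ and applying the Caffarelli-Kohn-Nirenberg inequality (\ref{b3}) to each term.
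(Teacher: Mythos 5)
Your proposal follows essentially the same route as the paper's own proof: the same weighted energy identity for $\dot u$ obtained by testing \eqref{c41} against $|x|^{\alpha}\dot u$, the same splitting of $L_2$ via $\mu=1+\rho^{\beta}$ with the best-constant Caffarelli--Kohn--Nirenberg inequality producing the critical coefficient $\frac{\alpha(\alpha-1)^{3}}{8}<1$, the same interpolation exponent $\theta=\frac{2\alpha-1}{3\alpha}$ for the $\rho^{\beta}$ part, the same unweighted bound $\int_0^T\|\dot u_x\|_{L^2}^2\,dt\le C(T)$ to close the Gronwall argument, and the same decomposition $u_t=\dot u-uu_x$ for the final estimate. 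The argument is correct and matches the paper in all essential respects.
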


\begin{Lemma}\label{lemmac10}
Let $(\rho,u)$  be the smooth solution to $(\ref{a1})-(\ref{a4})$.
Then for any $T>0$, it holds
$$\int_{\R}(\rho_{xx}^{2}+(\rho^{\gamma})_{xx}^{2}+(\rho^{\beta})_{xx}^{2})dx
+\int_{0}^{T}\int_{\R}\rho_{xt}^{2}dxdt\leq C(T).$$
\begin{proof}
Differentiating $(\ref{a1})_{1}$ with respect to $x$ twice, we get
\begin{equation}\label{c46}
\begin{aligned}
(\rho_{xx})_{t}+3\rho_{xx}u_{x}+3\rho_{x}u_{xx}+\rho u_{xxx}+u\rho_{xxx}=0.
\end{aligned}
\end{equation}
Multiplying $(\ref{c46})$ by $\rho_{xx}$, integrating over $\R$ and
using $(\ref{a1})_{1}$, we have
\begin{equation}\label{c47}
\begin{aligned}
\frac{1}{2}\frac{d}{dt}\int_{\R}\rho_{xx}^{2}dx&=-3\int_{\R}\rho_{xx}^{2}u_{x}dx-3\int_{\R}\rho_{x}u_{xx}\rho_{xx}dx-\int_{\R}\rho u_{xxx}\rho_{xx}dx-\int_{\R}u\rho_{xxx}\rho_{xx}dx\\
&\leq C\|u_{x}\|_{L^{\infty}}\int_{\R}\rho_{xx}^{2}dx+C\|\rho_{x}\|_{L^{\infty}}\|u_{xx}\|_{L^{2}}\|\rho_{xx}\|_{L^{2}}\\
&\ \ \ \ +C\|\rho_{xx}\|_{L^{2}}\|u_{xxx}\|_{L^{2}}
+C\|u_{x}\|_{L^{\infty}}\int_{\R}\rho_{xx}^{2}dx\\
&\leq C\int_{\R}\rho_{xx}^{2}dx+C(\|\rho_{xx}\|_{L^{2}}+1)\|\rho_{xx}\|_{L^{2}}+C\|u_{xxx}\|_{L^{2}}\|\rho_{xx}\|_{L^{2}}\\
&\leq C\int_{\R}\rho_{xx}^{2}dx+C(\|u_{xxx}\|_{L^{2}}^{2}+1).
\end{aligned}
\end{equation}
Similarly, we have
\begin{equation}\label{c48}
\begin{aligned}
\frac{d}{dt}\int_{\R}((\rho^{\gamma})_{xx}^{2}+(\rho^{\beta})_{xx}^{2})dx\leq
C\int_{\R}((\rho^{\gamma})_{xx}^{2}+(\rho^{\beta})_{xx}^{2})dx+C(\|u_{xxx}\|_{L^{2}}^{2}+1).
\end{aligned}
\end{equation}
Combining $(\ref{c47})$ with $(\ref{c48})$, we obtain
\begin{equation}\label{c49}
\begin{aligned}
\frac{d}{dt}\int_{\R}(\rho_{xx}^{2}+(\rho^{\gamma})_{xx}^{2}+(\rho^{\beta})_{xx}^{2})dx\leq
C\int_{\R}(\rho_{xx}^{2}+(\rho^{\gamma})_{xx}^{2}+(\rho^{\beta})_{xx}^{2})dx+C(\|u_{xxx}\|_{L^{2}}^{2}+1).
\end{aligned}
\end{equation}
Differentiating $(\ref{a1})_{2}$ with respect to $x$, we have
\begin{equation}\label{c50}
\begin{aligned}
\mu(\rho)u_{xxx}=-2(\rho^{\beta})_{x}u_{xx}-(\rho^{\beta})_{xx}u_{x}+\rho_{x}u_{t}+\rho
u_{xt}+\rho_{x}uu_{x}+\rho u_{x}^{2}+\rho
uu_{xx}+(\rho^{\gamma})_{xx},
\end{aligned}
\end{equation}
which yields
\begin{equation}\label{c51}
\begin{aligned}
\|u_{xxx}\|_{L^{2}}&\leq C\biggl(\|(\rho^{\beta})_{x}u_{xx}\|_{L^{2}}+\|(\rho^{\beta})_{xx}u_{x}\|_{L^{2}}+\|\rho_{x}u_{t}\|_{L^{2}}+\|\rho u_{xt}\|_{L^{2}}\\
&\ \ \ \ +\|\rho_{x}uu_{x}\|_{L^{2}}+\|\rho u_{x}^{2}\|_{L^{2}}+\|\rho uu_{xx}\|_{L^{2}}+\|(\rho^{\gamma})_{xx}\|_{L^{2}}\biggl)\\
&\leq \biggl(\|(\rho^{\beta})_{x}\|_{L^{\infty}}\|u_{xx}\|_{L^{2}}+\|u_{x}\|_{L^{\infty}}\|(\rho^{\beta})_{xx}\|_{L^{2}}+\|u_{t}\|_{L^{\infty}}\|\rho_{x}\|_{L^{2}}\\
&\ \ \ \ +\|\rho\|_{L^{\infty}}\|u_{xt}\|_{L^{2}}+\|u\|_{L^{\infty}}\|u_{x}\|_{L^{\infty}}\|\rho_{x}\|_{L^{2}}+\|\rho\|_{L^{\infty}}\|u_{x}\|_{L^{4}}^{2}\\
&\ \ \ \ \ \ \ +\|\rho u\|_{L^{\infty}}\|u_{xx}\|_{L^{2}}+\|(\rho^{\gamma})_{xx}\|_{L^{2}}\biggl)\\
&\leq C\biggl(\|\rho_{xx}\|_{L^{2}}+\|(\rho^{\beta})_{xx}\|_{L^{2}}+\||x|^{\frac{\alpha}{2}}\dot{u}_{x}\|_{L^{2}}+\|u_{xt}\|_{L^{2}}+\||x|^{\frac{\alpha}{2}}u_{x}\|_{L^{2}}\\
&\ \ \ \ \ \ \ +1+\|(\rho^{\gamma})_{xx}\|_{L^{2}}\biggl).
\end{aligned}
\end{equation}
Combining $(\ref{c49})$ with $(\ref{c51})$, we get
\begin{equation}\nonumber
\begin{aligned}
\frac{d}{dt}&\int_{\R}(\rho_{xx}^{2}+(\rho^{\gamma})_{xx}^{2}+(\rho^{\beta})_{xx}^{2})dx\\
&\leq
C\int_{\R}(\rho_{xx}^{2}+(\rho^{\gamma})_{xx}^{2}+(\rho^{\beta})_{xx}^{2})dx+C(\||x|^{\frac{\alpha}{2}}\dot{u}_{x}\|_{L^{2}}^{2}+\|u_{xt}\|_{L^{2}}^{2}+\||x|^{\frac{\alpha}{2}}u_{x}\|_{L^{2}}^{2}+1)
\end{aligned}
\end{equation}
Using Lemmas $\ref{lemmac6},\ \ref{lemmac7},\ \ref{lemmac9},$ and
the Gronwall inequality, we get
$$\int_{\R}(\rho_{xx}^{2}+(\rho^{\gamma})_{xx}^{2}+(\rho^{\beta})_{xx}^{2}dx\leq C(T).$$
Furthermore, differentiating $(\ref{a1})_{1}$ with respect to $x$, we have
\begin{equation}\nonumber
\begin{aligned}
\rho_{xt}+2\rho_{x}u_{x}+\rho u_{xx}+u\rho_{xx}=0
\end{aligned}
\end{equation}
which implies
\begin{equation}\label{c53}
\begin{aligned}
\|\rho_{xt}\|_{L^{2}}&\leq C(\|\rho_{x}u_{x}\|_{L^{2}}+\|\rho u_{xx}\|_{L^{2}}+\|u\rho_{xx}\|_{L^{2}})\\
&\leq C(1+\|u\|_{L^{\infty}})\\
&\leq C(1+\||x|^{\frac{\alpha}{2}}u_{x}\|_{L^{2}})
\end{aligned}
\end{equation}
and
$$\int_{0}^{T}\int_{\R}\rho_{xt}^{2}dxdt\leq C(T).$$
The proof of the lemma  is completed.
\end{proof}
\end{Lemma}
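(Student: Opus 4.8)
The plan is to derive a transport-type evolution equation for each of the second spatial derivatives $\rho_{xx}$, $(\rho^{\gamma})_{xx}$, $(\rho^{\beta})_{xx}$, test it against the corresponding quantity, and close a Gronwall inequality once $\|u_{xxx}\|_{L^{2}}$ is controlled. First I would differentiate the mass equation $(\ref{a1})_{1}$ twice in $x$ to obtain $(\rho_{xx})_{t}+3\rho_{xx}u_{x}+3\rho_{x}u_{xx}+\rho u_{xxx}+u\rho_{xxx}=0$, multiply by $\rho_{xx}$ and integrate over $\R$. The crucial structural point is that the top-order transport term $\int_{\R}u\rho_{xxx}\rho_{xx}\,dx$ integrates by parts into $\frac{1}{2}\int_{\R}u_{x}\rho_{xx}^{2}\,dx$, which is controlled by $\|u_{x}\|_{L^{\infty}}$ (bounded via Lemma \ref{lemmac8}); the one genuinely third-order term $\int_{\R}\rho u_{xxx}\rho_{xx}\,dx$ is then estimated by Cauchy--Schwarz, throwing the factor $\|u_{xxx}\|_{L^{2}}$ onto the right-hand side. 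Using $\|\rho_{x}\|_{L^{\infty}}\le C(\|\rho_{xx}\|_{L^{2}}+1)$ and $\|u_{xx}\|_{L^{2}}\le C(T)$ from Lemma \ref{lemmac8} for the mixed term, this yields $\frac{d}{dt}\int_{\R}\rho_{xx}^{2}\,dx\le C\int_{\R}\rho_{xx}^{2}\,dx+C(\|u_{xxx}\|_{L^{2}}^{2}+1)$, and the identical computation applied to the analogous transport equations for $\rho^{\gamma}$ and $\rho^{\beta}$ gives the same inequality for $(\rho^{\gamma})_{xx}$ and $(\rho^{\beta})_{xx}$.

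The heart of the argument is bounding $\|u_{xxx}\|_{L^{2}}$. I would differentiate the momentum equation $(\ref{a1})_{2}$ once in $x$ and solve algebraically for the leading term, obtaining $\mu(\rho)u_{xxx}=-2(\rho^{\beta})_{x}u_{xx}-(\rho^{\beta})_{xx}u_{x}+\rho_{x}u_{t}+\rho u_{xt}+\rho_{x}uu_{x}+\rho u_{x}^{2}+\rho uu_{xx}+(\rho^{\gamma})_{xx}$. Since $\mu(\rho)\ge 1$, I can bound $\|u_{xxx}\|_{L^{2}}$ term by term using the previously established estimates: $\|u_{xx}\|_{L^{2}}$ and $\|u_{x}\|_{L^{\infty}}$ are bounded by Lemma \ref{lemmac8}, $\|\rho_{x}\|_{L^{2}}$ and $\|(\rho^{\beta})_{x}\|_{L^{2}}$ by Lemma \ref{lemmac4}, $\|u\|_{L^{\infty}}\le C(\||x|^{\frac{\alpha}{2}}u_{x}\|_{L^{2}}+1)$ by \eqref{95-1}, while the terms $\rho_{x}u_{t}$ and $\rho u_{xt}$ reduce to the weighted material-derivative quantity $\||x|^{\frac{\alpha}{2}}\dot{u}_{x}\|_{L^{2}}$ and to $\|u_{xt}\|_{L^{2}}$. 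The upshot is a pointwise-in-time bound $\|u_{xxx}\|_{L^{2}}\le C\big(\|\rho_{xx}\|_{L^{2}}+\|(\rho^{\gamma})_{xx}\|_{L^{2}}+\|(\rho^{\beta})_{xx}\|_{L^{2}}+\||x|^{\frac{\alpha}{2}}\dot{u}_{x}\|_{L^{2}}+\|u_{xt}\|_{L^{2}}+\||x|^{\frac{\alpha}{2}}u_{x}\|_{L^{2}}+1\big)$.

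I would then substitute this into the summed density inequality. The second-derivative density norms generated by $\|u_{xxx}\|_{L^{2}}^{2}$ are exactly the quantities being estimated, so they are absorbed into the Gronwall coefficient, while the velocity-side contributions $\||x|^{\frac{\alpha}{2}}\dot{u}_{x}\|_{L^{2}}^{2}$, $\|u_{xt}\|_{L^{2}}^{2}$, $\||x|^{\frac{\alpha}{2}}u_{x}\|_{L^{2}}^{2}$ constitute a time-integrable source by Lemmas \ref{lemmac6}, \ref{lemmac7}, \ref{lemmac9}. Gronwall's inequality then delivers $\int_{\R}(\rho_{xx}^{2}+(\rho^{\gamma})_{xx}^{2}+(\rho^{\beta})_{xx}^{2})\,dx\le C(T)$. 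Finally, for the $\rho_{xt}$ estimate I would differentiate the mass equation once in $x$ to obtain $\rho_{xt}=-(2\rho_{x}u_{x}+\rho u_{xx}+u\rho_{xx})$ and bound the right-hand side in $L^{2}$ using the norms just obtained together with $\|u\|_{L^{\infty}}\le C(\||x|^{\frac{\alpha}{2}}u_{x}\|_{L^{2}}+1)$; integrating in $t$ yields $\int_{0}^{T}\int_{\R}\rho_{xt}^{2}\,dxdt\le C(T)$.

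The main obstacle I anticipate is the circular coupling between $\|u_{xxx}\|_{L^{2}}$ and the second-order density derivatives: the bound for $u_{xxx}$ extracted from the differentiated momentum equation unavoidably contains $\|\rho_{xx}\|_{L^{2}}$, $\|(\rho^{\gamma})_{xx}\|_{L^{2}}$, $\|(\rho^{\beta})_{xx}\|_{L^{2}}$, so $u_{xxx}$ cannot be controlled prior to the density estimates themselves. The resolution is structural: these density terms enter the differential inequality linearly through $\|u_{xxx}\|_{L^{2}}^{2}$ and are therefore compatible with Gronwall, provided the purely velocity-driven terms are genuinely integrable in time -- which is precisely what the weighted estimates of Lemmas \ref{lemmac6}, \ref{lemmac7}, \ref{lemmac9} supply. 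The delicate step is thus the bookkeeping: separating which norms are already uniformly bounded from which must be carried as time-integrable sources, so that the Gronwall argument closes cleanly.
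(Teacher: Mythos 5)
Your proposal is correct and follows essentially the same route as the paper: twice-differentiating the mass equation and testing against $\rho_{xx}$ (with the transport term handled by integration by parts), extracting $\|u_{xxx}\|_{L^{2}}$ from the once-differentiated momentum equation in terms of the second-order density norms plus time-integrable velocity quantities from Lemmas \ref{lemmac6}, \ref{lemmac7}, \ref{lemmac9}, and closing via Gronwall. The circularity you flag is resolved exactly as in the paper, by absorbing the density norms into the Gronwall coefficient.
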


\begin{Lemma}\label{lemmac11}
Let $(\rho,u)$  be the smooth solution to $(\ref{a1})-(\ref{a4})$.
Then for any $T>0$\ and \
$2<\alpha<1+\frac{2}{\sqrt[3]{1+\sqrt[3]{4}}},$ it holds
$$\int_{\R}\bigg(\rho_{t}^{2}+(\rho^{\gamma})_{t}^{2}+(\rho^{\beta})_{t}^{2}+\rho_{xt}^{2}+(\rho^{\gamma})_{xt}^{2}+(\rho^{\beta})_{xt}^{2}+\mu(\rho)|x|^{\alpha}u_{x}^{2}\bigg)dx+\int_{0}^{T}\int_{\R}\rho \dot{u}^{2}|x|^{\alpha}dxdt\leq C(T).$$
\begin{proof}
Rewriting the equation $(\ref{a1})_{2}$ as
\begin{equation}\label{9-27}
\rho\dot{u}+(\rho^{\gamma})_{x}=(\mu(\rho)u_{x})_{x}.
\end{equation}
 Multiplying on both sides of \eqref{9-27} by
$|x|^{\alpha}\dot{u}$ and integrating on $\R$ with respect to $x$,
we have
\begin{equation}\label{c54}
\begin{aligned}
\frac{1}{2}\frac{d}{dt}\int_{\R}&\mu(\rho)|x|^{\alpha}u_{x}^{2}dx+\int_{\R}\rho\dot{u}^{2}|x|^{\alpha}dx\\
&=\frac{1}{2}\int_{\R}(\rho^{\beta})_{t}|x|^{\alpha}u_{x}^{2}dx-\int_{\R}\mu(\rho)u_{x}|x|^{\alpha}(uu_{x})_{x}dx\\
&\ \ \ \ -\int_{\R}\mu(\rho)u_{x}\alpha|x|^{\alpha-2}x\dot{u}dx
-\int_{\R}(\rho^{\gamma})_{x}|x|^{\alpha}\dot{u}dx\\
&\equiv M_{1}+M_{2}+M_{3}+M_{4}.
\end{aligned}
\end{equation}
Next we  estimate the terms $M_{1}-M_{4}$,
\begin{equation}\label{c55}
\begin{aligned}
M_{1}=\frac{1}{2}\int_{\R}(\rho^{\beta})_{t}|x|^{\alpha}u_{x}^{2}dx\leq
C\|(\rho^{\beta})_{t}\|_{L^{\infty}}\int_{\R}|x|^{\alpha}u_{x}^{2}dx\leq
C(1+\||x|^{\frac{\alpha}{2}}u_{x}\|_{L^{2}})\int_{\R}|x|^{\alpha}u_{x}^{2}dx,
\end{aligned}
\end{equation}
where
\begin{equation}\label{c56}
\begin{aligned}
\|\rho_{t}\|_{L^{\infty}}&\leq C(\|\rho_{t}\|_{L^{2}}+\|\rho_{xt}\|_{L^{2}})\\
&\leq C(\|\rho u_{x}\|_{L^{2}}+\|u\rho_{x}\|_{L^{2}}+\|\rho_{x}u_{x}\|_{L^{2}}+\|\rho u_{xx}\|_{L^{2}}+\|u\rho_{xx}\|_{L^{2}})\\
&\leq C(1+\|u\|_{L^{\infty}})\leq C(1+\||x|^{\frac{\alpha}{2}}u_{x}\|_{L^{2}}).
\end{aligned}
\end{equation}
Similar to $(\ref{c56})$, we  get
\begin{equation}\label{c57}
\begin{aligned}
\|(\rho^{\gamma})_{t}\|_{L^{\infty}}+\|(\rho^{\beta})_{t}\|_{L^{\infty}}\leq C(1+\||x|^{\frac{\alpha}{2}}u_{x}\|_{L^{2}}).
\end{aligned}
\end{equation}
Integration by parts yields
\begin{equation}\label{c58}
\begin{aligned}
M_{2}&=-\int_{\R}u_{x}|x|^{\alpha}(uu_{x})_{x}dx-\int_{\R}\rho^{\beta}u_{x}|x|^{\alpha}(uu_{x})_{x}dx\\
&=\frac{1}{2}\int_{\R}\alpha|x|^{\alpha-2}xuu_{x}^{2}dx-\frac{1}{2}\int_{\R}|x|^{\alpha}u_{x}^{3}dx
-\frac{1}{2}\int_{\R}\rho^{\beta}|x|^{\alpha}u_{x}^{3}dx\\
&\ \ \ \
+\frac{1}{2}\int_{\R}(\rho^{\beta})_{x}u_{x}^{2}|x|^{\alpha}udx+\frac{1}{2}\int_{\R}\rho^{\beta}
u_{x}^{2}u\alpha|x|^{\alpha-2}xdx\\
&\leq C\|u_{x}\|_{L^{\infty}}\||x|^{\frac{\alpha}{2}}u_{x}\|_{L^{2}}\||x|^{\frac{\alpha}{2}-1}u\|_{L^{2}}+C\|u_{x}\|_{L^{\infty}}\int_{\R}|x|^{\alpha}u_{x}^{2}dx\\
&\ \ \ \
+\|(\rho^{\beta})_{x}\|_{L^{\infty}}\|u\|_{L^{\infty}}\int_{\R}|x|^{\alpha}u_{x}^{2}dx
+C\|u_{x}\|_{L^{\infty}}
\||x|^{\frac{\alpha}{2}}u_{x}\|_{L^{2}}\||x|^{\frac{\alpha}{2}-1}u\|_{L^{2}}\\
&\leq
C(1+\||x|^{\frac{\alpha}{2}}u_{x}\|_{L^{2}})\int_{\R}|x|^{\alpha}u_{x}^{2}dx,
\end{aligned}
\end{equation}
\begin{equation}\label{c64}
\begin{aligned}
M_{3}&=-\int_{\R}\mu(\rho)u_{x}\alpha|x|^{\alpha-2}x\dot{u}\leq C\int_{\R}|u_{x}||\dot{u}||x|^{\alpha-1}=\int_{\R}(|x|^{\frac{\alpha}{2}}|u_{x}|)(|x|^{\frac{\alpha}{2}-1}|\dot{u}|)\\
&\leq C\||x|^{\frac{\alpha}{2}}u_{x}\|_{L^{2}}\||x|^{\frac{\alpha}{2}-1}\dot{u}\|_{L^{2}}\leq C\||x|^{\frac{\alpha}{2}}u_{x}\|_{L^{2}}\||x|^{\frac{\alpha}{2}}\dot{u}_{x}\|_{L^{2}}\\
&\leq
C\||x|^{\frac{\alpha}{2}}u_{x}\|_{L^{2}}^{2}+C\||x|^{\frac{\alpha}{2}}\dot{u}_{x}\|_{L^{2}}^{2}.
\end{aligned}
\end{equation}
Similarly, we have
\begin{equation}\label{c65}
\begin{aligned}
M_{4}&=-\int_{\R}(\rho^{\gamma})_{x}|x|^{\alpha}\dot{u}dx=\int_{\R}\rho^{\gamma}
(|x|^{\alpha}u_{x}+\alpha|x|^{\alpha-2}x\dot{u})dx\\
&\leq
C\|\rho^{\frac{\gamma}{2}}|x|^{\frac{\alpha}{2}}\|_{L^{2}}\||x|^{\frac{\alpha}{2}}\dot{u}_{x}\|_{L^{2}}
+C\|\rho^{\frac{\gamma}{2}}|x|^{\frac{\alpha}{2}}\|_{L^{2}}\||x|^{\frac{\alpha}{2}-1}\dot{u}\|_{L^{2}}\\
&\leq C\||x|^{\frac{\alpha}{2}}\dot{u}_{x}\|_{L^{2}}.
\end{aligned}
\end{equation}
Substituting \eqref{c55}, \eqref{c58}, \eqref{c64}, \eqref{c65}\
into\ \eqref{c54} yields
$$\int_{\R}\mu(\rho)|x|^{\alpha}u_{x}^{2}dx+\int_{0}^{T}\int_{\R}\rho \dot{u}^{2}|x|^{\alpha}dxdt\leq C(T).$$
It follows from $(\ref{c36})$\ and\ $(\ref{c53})$ that
$$\|\rho_{t}\|_{L^{2}}+\|\rho_{xt}\|_{L^{2}}\leq C(1+\||x|^{\frac{\alpha}{2}}u_{x}\|_{L^{2}})\leq C(T).$$
Similarly, we  have
$$\|(\rho^{\gamma})_{t}\|_{L^{2}}+\|(\rho^{\beta})_{t}\|_{L^{2}}+\|(\rho^{\gamma})_{xt}\|_{L^{2}}+\|(\rho^{\beta})_{xt}\|_{L^{2}}\leq C(T).$$
The proof of the lemma  is completed.
\end{proof}
\end{Lemma}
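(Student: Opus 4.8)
The plan is to treat the momentum equation in the form \eqref{9-27}, namely $\rho\dot u+(\rho^{\gamma})_{x}=(\mu(\rho)u_{x})_{x}$, test it against $|x|^{\alpha}\dot u$, and convert the leading term into a time derivative of the weighted quantity $\int_{\R}\mu(\rho)|x|^{\alpha}u_{x}^{2}dx$. Writing $\dot u=u_{t}+uu_{x}$ and integrating by parts, the term $\int_{\R}(\mu(\rho)u_{x})_{x}|x|^{\alpha}\dot u\,dx$ splits into a $u_{t}$-piece and a $uu_{x}$-piece. The $u_{t}$-piece produces $-\tfrac12\frac{d}{dt}\int\mu(\rho)|x|^{\alpha}u_{x}^{2}$ together with $\tfrac12\int(\mu(\rho))_{t}|x|^{\alpha}u_{x}^{2}$ (and $(\mu(\rho))_{t}=(\rho^{\beta})_{t}$), plus a boundary-type term $-\int\mu(\rho)u_{x}\alpha|x|^{\alpha-2}x\dot u\,dx$ after substituting $u_{t}=\dot u-uu_{x}$; the leftover $\int\mu(\rho)u_{x}^{2}\alpha|x|^{\alpha-2}xu\,dx$ cancels exactly the corresponding term coming from the $uu_{x}$-piece. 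This is precisely the identity \eqref{c54}, with right-hand side $M_{1}+M_{2}+M_{3}+M_{4}$, where $M_{1}=\tfrac12\int(\rho^{\beta})_{t}|x|^{\alpha}u_{x}^{2}$, $M_{2}=-\int\mu(\rho)u_{x}|x|^{\alpha}(uu_{x})_{x}$, $M_{3}=-\int\mu(\rho)u_{x}\alpha|x|^{\alpha-2}x\dot u$, and $M_{4}=-\int(\rho^{\gamma})_{x}|x|^{\alpha}\dot u$.

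First I would dispatch $M_{1}$ and $M_{2}$, which are the ``good'' terms that feed the Gronwall argument. For these I only need $L^{\infty}$ control of the zeroth-order quantities: from \eqref{c36}, \eqref{c53} (and their $\gamma,\beta$ analogues) together with the Sobolev embedding one obtains $\|(\rho^{\beta})_{t}\|_{L^{\infty}}+\|u_{x}\|_{L^{\infty}}+\|(\rho^{\beta})_{x}\|_{L^{\infty}}\le C(1+\||x|^{\frac{\alpha}{2}}u_{x}\|_{L^{2}})$, and from \eqref{95-1} the same bound for $\|u\|_{L^{\infty}}$. After integrating $M_{2}$ by parts, every resulting term carries either $\||x|^{\frac{\alpha}{2}}u_{x}\|_{L^{2}}^{2}=\int|x|^{\alpha}u_{x}^{2}$ or the Hardy factor $\||x|^{\frac{\alpha}{2}-1}u\|_{L^{2}}\le C\||x|^{\frac{\alpha}{2}}u_{x}\|_{L^{2}}$, so that $M_{1}+M_{2}\le C(1+\||x|^{\frac{\alpha}{2}}u_{x}\|_{L^{2}})\int_{\R}|x|^{\alpha}u_{x}^{2}dx$, which since $\mu(\rho)\ge1$ is dominated by the weighted energy itself.

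Next come $M_{3}$ and $M_{4}$, whose treatment is the crux. Both generate the weighted factor $\||x|^{\frac{\alpha}{2}-1}\dot u\|_{L^{2}}$, and the hard part is that I have no direct bound on this; I would instead invoke the Caffarelli--Kohn--Nirenberg/Hardy inequality of Lemma \ref{lemmab2}(2), valid precisely for $2<\alpha<1+\tfrac{2}{\sqrt[3]{1+\sqrt[3]{4}}}$, to get $\||x|^{\frac{\alpha}{2}-1}\dot u\|_{L^{2}}\le C\||x|^{\frac{\alpha}{2}}\dot u_{x}\|_{L^{2}}$. Then $M_{3}\le C\||x|^{\frac{\alpha}{2}}u_{x}\|_{L^{2}}^{2}+C\||x|^{\frac{\alpha}{2}}\dot u_{x}\|_{L^{2}}^{2}$, and $M_{4}$, after integrating by parts and using the already established weighted density bound $\|\rho^{\frac{\gamma}{2}}|x|^{\frac{\alpha}{2}}\|_{L^{2}}\le C(T)$ from Lemma \ref{lemmac6}, is bounded by $C\||x|^{\frac{\alpha}{2}}\dot u_{x}\|_{L^{2}}$. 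Crucially, these $\dot u_{x}$-terms are \emph{not} absorbed here: they are harmless because Lemma \ref{lemmac9} already gives $\int_{0}^{T}\int_{\R}(1+|x|^{\alpha})\mu(\rho)\dot u_{x}^{2}\,dxdt\le C(T)$, so their time integral is finite.

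Finally I would integrate \eqref{c54} over $[0,T]$. The source terms split into the time-integrable constant $C(T)$ coming from $M_{3},M_{4}$ via Lemma \ref{lemmac9} and Lemma \ref{lemmac6}, plus the Gronwall term $C(1+\||x|^{\frac{\alpha}{2}}u_{x}\|_{L^{2}})\int|x|^{\alpha}u_{x}^{2}$. The coefficient $\int_{0}^{T}(1+\||x|^{\frac{\alpha}{2}}u_{x}\|_{L^{2}})\,dt$ is finite because Lemma \ref{lemmac6} gives $\int_{0}^{T}\||x|^{\frac{\alpha}{2}}u_{x}\|_{L^{2}}^{2}dt\le C(T)$ and $\mu(\rho)\ge1$, so Cauchy--Schwarz applies; Gronwall then yields $\int_{\R}\mu(\rho)|x|^{\alpha}u_{x}^{2}dx\le C(T)$ and $\int_{0}^{T}\int_{\R}\rho\dot u^{2}|x|^{\alpha}dxdt\le C(T)$. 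With the weighted bound $\||x|^{\frac{\alpha}{2}}u_{x}\|_{L^{2}}\le C(T)$ in hand, I would close the lemma by feeding it back into \eqref{c36} and \eqref{c53} (and their $\gamma,\beta$ counterparts), which immediately give $\|\rho_{t}\|_{L^{2}}+\|\rho_{xt}\|_{L^{2}}+\|(\rho^{\gamma})_{t}\|_{L^{2}}+\|(\rho^{\beta})_{t}\|_{L^{2}}+\|(\rho^{\gamma})_{xt}\|_{L^{2}}+\|(\rho^{\beta})_{xt}\|_{L^{2}}\le C(T)$. The main obstacle throughout is that there is no $L^{p}$ control of $\dot u$ or $u$ directly, so the weighted Hardy/CKN inequality, together with the prior weighted $\dot u_{x}$-estimate of Lemma \ref{lemmac9}, is what makes the $M_{3},M_{4}$ terms and the Gronwall closure work.
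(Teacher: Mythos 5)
Your proposal is correct and follows essentially the same route as the paper: the same weighted energy identity \eqref{c54} obtained by testing \eqref{9-27} against $|x|^{\alpha}\dot{u}$, the same term-by-term bounds on $M_{1}$--$M_{4}$ via the $L^{\infty}$ estimates \eqref{c56}--\eqref{c57} and the Caffarelli--Kohn--Nirenberg/Hardy inequality, the same reliance on Lemma \ref{lemmac9} to make the $\||x|^{\frac{\alpha}{2}}\dot{u}_{x}\|_{L^{2}}^{2}$ source terms time-integrable, and the same Gronwall closure followed by \eqref{c36} and \eqref{c53}. Your added remarks on the cancellation producing \eqref{c54} and on the integrability of the Gronwall coefficient are just explicit versions of steps the paper leaves implicit.
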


\begin{Lemma}\label{lemmac12}
Let $(\rho,u)$  be the smooth solution to $(\ref{a1})-(\ref{a4})$.
Then for any $0\leq t\leq T$, it holds
$$t\int_{\R}\mu(\rho)\dot{u}_{x}^{2}dx+\int_{0}^{T}t(\|\sqrt{\rho}\dot{u}_{t}\|_{L^{2}}^{2}+\|\dot{u}_{xx}\|_{L^{2}}^{2})dt\leq C(T).$$
\begin{proof}
Multiplying on both sides of $(\ref{c41})$  by $\dot{u}_{t}$ and
integrating the resulted equation with respect to $x$ over $\R$, we
have
\begin{equation}\label{c66}
\begin{aligned}
\frac{1}{2}\frac{d}{dt}&\int_{\R}\mu(\rho)\dot{u}_{x}^{2}dx+\int_{\R}\rho
\dot{u}_{t}^{2}dx=\frac{d}{dt}\int_{\R}(\mu(\rho)u_{x}^{2}+\beta\rho^{\beta}u_{x}^{2}-\gamma\rho^{\gamma}u_{x})\dot{u}_{x}dx\\
&+\frac{1}{2}\int_{\R}(\rho^{\beta})_{t}\dot{u}_{x}^{2}dx
-\int_{\R}\rho u\dot{u}_{x}\dot{u}_{t}dx-\int_{\R}((\rho^{\beta})_{t}u_{x}^{2}+\beta(\rho^{\beta})_{t}u_{x}^{2}-\gamma(\rho^{\gamma})_{t}u_{x})\dot{u}_{x}dx\\
&-\int_{\R}(2\mu(\rho)u_{x}+2\beta\rho^{\beta}
u_{x}-\gamma\rho^{\gamma})(\dot{u}-uu_{x})_{x}\dot{u}_{x}dx\\
\end{aligned}
\end{equation}
It follows that:
\begin{equation}\label{c67}
\begin{aligned}
\frac{1}{2}\int_{\R}&(\rho^{\beta})_{t}\dot{u}_{x}^{2}dx
-\int_{\R}\rho u\dot{u}_{x}\dot{u}_{t}dx\\
&\leq C\|(\rho^{\beta})_{t}\|_{L^{\infty}}\int_{\R}\dot{u}_{x}^{2}dx+\|\sqrt{\rho}\|_{L^{\infty}}\|u\|_{L^{\infty}}\|\sqrt{\rho}\dot{u}_{t}\|_{L^{2}}\|\dot{u}_{x}\|_{L^{2}}\\
&\leq C(1+\||x|^{\frac{\alpha}{2}}u_{x}\|_{L^{2}})\int_{\R}\dot{u}_{x}^{2}dx+C(1+\||x|^{\frac{\alpha}{2}}u_{x}\|_{L^{2}})\|\sqrt{\rho}\dot{u}_{t}\|_{L^{2}}\|\dot{u}_{x}\|_{L^{2}}\\
&\leq \varepsilon\int_{\R}\rho
\dot{u}_{t}^{2}dx+C\int_{\R}\dot{u}_{x}^{2}dx,
\end{aligned}
\end{equation}
where $(\ref{c57})$ has been used. Due to Lemma $\ref{lemmac11}$, we
obtain
\begin{equation}\label{c68}
\begin{aligned}
-\int_{\R}&((\rho^{\beta})_{t}u_{x}^{2}+\beta(\rho^{\beta})_{t}u_{x}^{2}-\gamma(\rho^{\gamma})_{t}u_{x})\dot{u}_{x}dx\\
&\leq C\|(\rho^{\beta})_{t}\|_{L^{\infty}}\|u_{x}\|_{L^{4}}^{2}\|\dot{u}_{x}\|_{L^{2}}+\|(\rho^{\gamma})_{t}\|_{L^{\infty}}\|u_{x}\|_{L^{2}}\|\dot{u}_{x}\|_{L^{2}}\\
&\leq C(1+\||x|^{\frac{\alpha}{2}}u_{x}\|_{L^{2}})\|\dot{u}_{x}\|_{L^{2}}\\
&\leq C\|\dot{u}_{x}\|_{L^{2}}.
\end{aligned}
\end{equation}
Similarly,
\begin{equation}\label{c69}
\begin{aligned}
-\int_{\R}&(2\mu(\rho)u_{x}+2\beta\rho^{\beta}
u_{x}-\gamma\rho^{\gamma})(\dot{u}-uu_{x})_{x}\dot{u}_{x}dx\\
&=-\int_{\R}(2\mu(\rho)u_{x}+2\beta\rho^{\beta}
u_{x}-\gamma\rho^{\gamma})(\dot{u}_{x}^{2}-u_{x}^{2}\dot{u}_{x}-uu_{xx}\dot{u}_{x})dx\\
&\leq C\int_{\R}\dot{u}_{x}^{2}dx+C\|u_{x}\|_{L^{4}}^{2}\|\dot{u}_{x}\|_{L^{2}}+C\|u\|_{L^{\infty}}\|u_{xx}\|_{L^{2}}\|\dot{u}_{x}\|_{L^{2}}\\
&\leq C\int_{\R}\dot{u}_{x}^{2}dx+C(T).
\end{aligned}
\end{equation}
Substituting $(\ref{c67}),(\ref{c68}),(\ref{c69})$\ into\
$(\ref{c66})$, multiplying the resulted equation by $t$ and
integrating respect to $t$ over $[\tau,t_{1}]$ with $\tau,\
t_{1}\in[0,T]$ give that
\begin{equation}\label{t1}
\begin{aligned}
&t_{1}\|\sqrt{\mu(\rho)}\dot{u}_{x}\|_{L^{2}}^{2}(t_{1})+\int_{\tau}^{t_{1}}t\|\sqrt{\rho}
\dot{u}_{t}\|_{L^{2}}^{2}dt\\
&\leq
\tau\|\sqrt{\mu(\rho)}\dot{u}_{x}\|_{L^{2}}^{2}(\tau)+t_{1}G(t_{1})-\tau
G(\tau)+\int_{\tau}^{t_{1}}\|\sqrt{\mu(\rho)}\dot{u}_{x}\|_{L^{2}}^{2}dt\\
&\ \ \ \ -\int_{\tau}^{t_{1}}
G(t)dt+C\int_{\tau}^{t_{1}}t\|\dot{u}_{x}\|_{L^{2}}^{2}dt\\
&\leq \tau\|\sqrt{\mu(\rho)}\dot{u}_{x}\|_{L^{2}}^{2}(\tau)-\tau
G(\tau)+C\int_{\tau}^{t_{1}}t\|\dot{u}_{x}\|_{L^{2}}^{2}dt+C(T)
\end{aligned}
\end{equation}
where
$$G(t)=\int_{\R}(\mu(\rho)u_{x}^{2}+\beta\rho^{\beta}u_{x}^{2}-\gamma\rho^{\gamma}u_{x})\dot{u}_{x}dx.$$
It follows from Lemma \ref{lemmac9} that $G(t)\in L^{1}(0,T),\
\|\sqrt{\mu(\rho)}\dot{u}_{x}\|_{L^{2}}\in L^{2}(0,T)$. Thus, there
exists a subsequence $\tau_{k}$ such that
$$\tau_{k}\rightarrow 0,\ \ \tau_{k}G(\tau_{k})\rightarrow 0,\ \ \tau_{k}\|\sqrt{\mu(\rho)}\dot{u}_{x}\|_{L^{2}}^{2}(\tau_{k})\rightarrow0,\ \ as\
\ k\rightarrow+\infty.$$  Taking $\tau=\tau_{k}$ in (\ref{t1}), then
letting $k\rightarrow+\infty$ and using the Gronwall inequality, one
gets that
$$t\|\sqrt{\mu(\rho)}\dot{u}_{x}\|_{L^{2}}^{2}+\int_{0}^{T}t\|\sqrt{\rho}\dot{u}_{t}\|_{L^{2}}^{2}dt
\leq C(T).$$ It follows from \eqref{c41} that
$$\mu(\rho)\dot{u}_{xx}=\rho\dot{u}_{t}+\rho u\dot{u}_{x}-(\gamma
pu_{x}-\mu(\rho)u_{x}^{2}-\beta\rho^{\beta}u_{x}^{2})_{x}-(\rho^{\beta})_{x}\dot{u}_{x}$$
Which yields
\begin{equation}\nonumber
\begin{aligned}
\|\dot{u}_{xx}\|_{L^{2}}&\leq
C\bigg(\|\rho\dot{u}_{t}\|_{L^{2}}+\|\rho
u\dot{u}_{x}\|_{L^{2}}+\|(\gamma
pu_{x}-\mu(\rho)u_{x}^{2}-\beta\rho^{\beta}u_{x}^{2})_{x}\|_{L^{2}}+\|(\rho^{\beta})_{x}\dot{u}_{x}\|_{L^{2}}\bigg)\\
&\leq
C(\|\sqrt{\rho}\dot{u}_{t}\|_{L^{2}}+\|\dot{u}_{x}\|_{L^{2}}+1)
\end{aligned}
\end{equation}
and
$$\int_{0}^{T}t\|\dot{u}_{xx}\|_{L^{2}}^{2}dt\leq C(T)$$
The proof of the lemma is completed.
\end{proof}
\end{Lemma}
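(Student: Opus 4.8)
The plan is to run a Hoff-type energy estimate on the equation \eqref{c41} for the material derivative $\dot u$, this time testing against $\dot u_t$ rather than against $|x|^{\alpha}\dot u$, and then to insert a time weight $t$ in order to compensate for the absence of any initial bound on $\dot u_x$.

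First I would multiply \eqref{c41} by $\dot u_t$ and integrate over $\R$. The viscous term $-(\mu(\rho)\dot u_x)_x$ produces, after one integration by parts in $x$, the good quantity $\frac12\frac{d}{dt}\int_\R\mu(\rho)\dot u_x^2\,dx$ together with the remainder $-\frac12\int_\R(\rho^\beta)_t\dot u_x^2\,dx$ (recall $\mu(\rho)_t=(\rho^\beta)_t$), while the convective term contributes $\int_\R\rho u\dot u_x\dot u_t\,dx$; the dissipative term $\int_\R\rho\dot u_t^2\,dx$ comes directly from the first term of \eqref{c41}. The delicate term is the flux on the right: setting $F=\gamma\rho^{\gamma}u_x-\mu(\rho)u_x^2-\beta\rho^{\beta}u_x^2$, I would write $\int_\R F_x\dot u_t\,dx=-\int_\R F(\dot u_x)_t\,dx$ and then peel off a total time derivative, $-\int_\R F(\dot u_x)_t\,dx=\frac{d}{dt}\int_\R(-F)\dot u_x\,dx-\int_\R(-F)_t\dot u_x\,dx$; since $-F=\mu(\rho)u_x^2+\beta\rho^{\beta}u_x^2-\gamma\rho^{\gamma}u_x$, this is precisely the source of the transportable term $\frac{d}{dt}\int_\R(\mu(\rho)u_x^2+\beta\rho^{\beta}u_x^2-\gamma\rho^{\gamma}u_x)\dot u_x\,dx$, which I will call $\frac{d}{dt}G(t)$, together with several lower-order pieces generated by $(-F)_t$.

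Next I would estimate all the remaining terms. The products $\int_\R(\rho^\beta)_t\dot u_x^2$ and $\int_\R\rho u\dot u_x\dot u_t$ are absorbed using the $L^\infty$ bounds on $(\rho^\beta)_t$ and on $\|u\|_{L^\infty}$ supplied by \eqref{c57} and \eqref{95-1} together with Lemma~\ref{lemmac11}, hiding a small multiple of the dissipation $\int_\R\rho\dot u_t^2$ on the left via Young's inequality. The pieces arising from $(-F)_t$ and from expanding $(\dot u-uu_x)_x$ are controlled by $\|(\rho^\gamma)_t\|_{L^\infty}$, $\|(\rho^\beta)_t\|_{L^\infty}$, $\|u_x\|_{L^4}^2$ and $\|u_{xx}\|_{L^2}$, each uniformly bounded by Lemmas~\ref{lemmac6},~\ref{lemmac8} and~\ref{lemmac11}, leaving a right-hand side of the clean form $C\int_\R\dot u_x^2\,dx+C(T)$ plus the total derivative $\frac{d}{dt}G(t)$.

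The outcome is a differential inequality for $\int_\R\mu(\rho)\dot u_x^2\,dx$. Multiplying it by $t$ and integrating over $[\tau,t_1]$ generates the boundary contributions $\tau\|\sqrt{\mu(\rho)}\dot u_x\|_{L^2}^2(\tau)$ and $\tau G(\tau)$. \textbf{The main obstacle is exactly here:} there is no control on $\dot u_x$ at $t=0$, so one cannot simply let $\tau\to0$. To circumvent this I would use that Lemma~\ref{lemmac9} already furnishes $\sqrt{\mu(\rho)}\dot u_x\in L^2(0,T)$ and $G\in L^1(0,T)$; consequently there exists a sequence $\tau_k\to0$ along which both $\tau_k\|\sqrt{\mu(\rho)}\dot u_x\|_{L^2}^2(\tau_k)$ and $\tau_kG(\tau_k)$ tend to $0$. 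Taking $\tau=\tau_k$, letting $k\to\infty$, and invoking the Gronwall inequality (with the time integrability of $\|\dot u_x\|_{L^2}^2$ coming from Lemma~\ref{lemmac9}) yields the weighted bounds on $t\|\sqrt{\mu(\rho)}\dot u_x\|_{L^2}^2$ and on $\int_0^T t\|\sqrt\rho\dot u_t\|_{L^2}^2\,dt$. Finally, to recover $\dot u_{xx}$ I would solve \eqref{c41} algebraically, $\mu(\rho)\dot u_{xx}=\rho\dot u_t+\rho u\dot u_x-F_x-(\rho^\beta)_x\dot u_x$, and bound the right-hand side in $L^2$ by $C(\|\sqrt\rho\dot u_t\|_{L^2}+\|\dot u_x\|_{L^2}+1)$ using $\mu(\rho)\ge1$, the uniform density bounds, and the preceding estimates; multiplying by $t$ and integrating then transfers the bound to $\int_0^T t\|\dot u_{xx}\|_{L^2}^2\,dt$, completing the proof.
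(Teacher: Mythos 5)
Your proposal follows essentially the same route as the paper's own proof: test \eqref{c41} against $\dot u_t$, peel off the total time derivative $\frac{d}{dt}G(t)$ from the flux term, absorb the remaining terms using the $L^\infty$ bounds from \eqref{c57}, \eqref{95-1} and Lemma \ref{lemmac11}, insert the time weight $t$, handle the boundary terms at $\tau\to0$ via a subsequence $\tau_k$ chosen from the $L^1$/$L^2$ time-integrability furnished by Lemma \ref{lemmac9}, apply Gronwall, and finally read off $\dot u_{xx}$ algebraically from the equation. The argument is correct and matches the paper's proof in all essential respects, including the key subsequence device for the missing initial bound on $\dot u_x$.
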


\begin{Lemma}\label{lemmac13}
Let $(\rho,u)$  be the smooth solution to $(\ref{a1})-(\ref{a4})$.
Then for any $0\leq t\leq T$, it holds
$$t^{2}\int_{\R}\rho\dot{u}_{t}^{2}dx+
\int_{0}^{T}t^{2}\int_{\R}\mu(\rho)\dot{u}_{xt}^{2}dxdt\leq C(T).$$
\begin{proof}
Differentiating $(\ref{c41})$ with respect to $t$, we have
\begin{equation}\label{c70}
\begin{aligned}
&\rho\dot{u}_{tt}+\rho_{t}\dot{u}_{t}+\rho_{t}u\dot{u}_{x}+\rho u_{t}\dot{u}_{x}+\rho u\dot{u}_{xt}-(\mu(\rho)\dot{u}_{xt})_{x}-((\rho^{\beta})_{t}\dot{u}_{x})_{x}\\
&=(\gamma(\rho^{\gamma})_{t}u_{x}+\gamma\rho^{\gamma}u_{xt}-(\rho^{\beta})_{t}u_{x}^{2}-2\mu(\rho)u_{x}u_{xt}-\beta(\rho^{\beta})_{t}u_{x}^{2}-2\beta\rho^{\beta}
u_{x}u_{xt})_{x}.
\end{aligned}
\end{equation}
Multiplying on both sides of $(\ref{c70})$ by $\dot{u}_{t}$,
integrating the resulted equation  over $\R$, we have
\begin{equation}\label{c71}
\begin{aligned}
\frac{1}{2}&\frac{d}{dt}\int_{\R}\rho \dot{u}_{t}^{2}dx+\int_{\R}\mu(\rho)\dot{u}_{xt}^{2}dx\\
&\leq -\int_{\R}\rho_{t}u\dot{u}_{x}\dot{u}_{t}dx-\int_{\R}\rho u_{t}\dot{u}_{x}\dot{u}_{t}dx-\int_{\R}(\rho^{\beta})_{t}\dot{u}_{x}\dot{u}_{xt}dx-2\int_{\R}\rho u\dot{u}_{xt}\dot{u}_{t}dx\\
&\ \ \ \ +C\|\dot{u}_{xt}\|_{L^{2}}(\|(\rho^{\gamma})_{t}\|_{L^{\infty}}\|u_{x}\|_{L^{2}}+\|u_{xt}\|_{L^{2}}+\|(\rho^{\beta})_{t}\|_{L^{\infty}}\|u_{x}\|_{L^{4}}^{2})\\
&\leq -\int_{\R}\rho_{t}u\dot{u}_{x}\dot{u}_{t}dx-\int_{\R}\rho u_{t}\dot{u}_{x}\dot{u}_{t}dx-\int_{\R}(\rho^{\beta})_{t}\dot{u}_{x}\dot{u}_{xt}dx-2\int_{\R}\rho u\dot{u}_{xt}\dot{u}_{t}dx+C\|\dot{u}_{xt}\|_{L^{2}}\\
\end{aligned}
\end{equation}
It follows  that
\begin{equation}\label{c72}
\begin{aligned}
-\int_{\R}\rho_{t}u\dot{u}_{x}\dot{u}_{t}dx&=\int_{\R}(\rho
u)_{x}u\dot{u}_{x}\dot{u}_{t}dx\\
&=-\int_{\R}\rho
u(u_{x}\dot{u}_{x}\dot{u}_{t}+u\dot{u}_{xx}\dot{u}_{t}+u\dot{u}_{x}\dot{u}_{tx})dx\\
&\leq
C(\|\sqrt{\rho}\dot{u}_{t}\|_{L^{2}}\|\dot{u}_{x}\|_{L^{2}}+\|\sqrt{\rho}\dot{u}_{t}\|_{L^{2}}
\|\dot{u}_{xx}\|_{L^{2}}+\|\dot{u}_{x}\|_{L^{2}}\|\dot{u}_{tx}\|_{L^{2}})\\
&\leq \varepsilon
\|\dot{u}_{tx}\|_{L^{2}}^{2}+C(\|\sqrt{\rho}\dot{u}_{t}\|_{L^{2}}^{2}+\|\dot{u}_{x}\|_{L^{2}}^{2}
+\|\dot{u}_{xx}\|_{L^{2}}^{2})
\end{aligned}
\end{equation}
\begin{equation}\label{c73}
\begin{aligned}
-\int_{\R}\rho u_{t}\dot{u}_{x}\dot{u}_{t}dx&\leq C\int_{\R}\sqrt{\rho}|\dot{u}_{t}||\sqrt{\rho}u_{t}||\dot{u}_{x}|dx\\
&\leq C\|u_{t}\|_{L^{\infty}}\|\sqrt{\rho}\dot{u}_{t}\|_{L^{2}}\|\dot{u}_{x}\|_{L^{2}}\\
&\leq C(\||x|^{\frac{\alpha}{2}}\dot{u}_{x}\|_{L^{2}}+1)\|\sqrt{\rho}\dot{u}_{t}\|_{L^{2}}\\
&\leq C\int_{\R}\rho
\dot{u}_{t}^{2}dx+C(\||x|^{\frac{\alpha}{2}}\dot{u}_{x}\|_{L^{2}}^{2}+1).
\end{aligned}
\end{equation}
Similarly, we have
\begin{equation}\label{c74}
\begin{aligned}
-\int_{\R}(\rho^{\beta})_{t}\dot{u}_{x}\dot{u}_{xt}dx&-2\int_{\R}\rho u\dot{u}_{xt}\dot{u}_{t}dx\\
&\leq C\|(\rho^{\beta})_{t}\|_{L^{\infty}}\|\dot{u}_{x}\|_{L^{2}}\|\dot{u}_{xt}\|_{L^{2}}+C\|u\|_{L^{\infty}}\|\dot{u}_{xt}\|_{L^{2}}\|\sqrt{\rho}\dot{u}_{t}\|_{L^{2}}\\
&\leq C\|\dot{u}_{xt}\|_{L^{2}}(1+\|\sqrt{\rho}\dot{u}_{t}\|_{L^{2}})\\
&\leq \varepsilon\|\dot{u}_{xt}\|_{L^{2}}^{2}+C\int_{\R}\rho
\dot{u}_{t}^{2}dx+C(T),
\end{aligned}
\end{equation}
\begin{equation}\label{c74-1}
\begin{aligned}
C\|\dot{u}_{xt}\|_{L^{2}}\leq
\varepsilon\|\dot{u}_{xt}\|_{L^{2}}^{2}+C(T).
\end{aligned}
\end{equation}
Substituting (\ref{c72}), (\ref{c73}), (\ref{c74}), \eqref{c74-1}\
into\ (\ref{c71}), multiplying the resulted equation by $t^{2}$ and
integrating respect to $t$ over $[\tau,t_{1}]$ with $\tau,\
t_{1}\in[0,T]$ give that
\begin{equation}\label{t2}
\begin{aligned}
t_{1}^{2}\|\sqrt{\rho}\dot{u}_{t}\|_{L^{2}}^{2}(t_{1})&+\int_{\tau}^{t_{1}}t^{2}\|\sqrt
{\mu(\rho)}\dot{u}_{xt}\|_{L^{2}}^{2}dt\\
&\leq
\tau^{2}\|\sqrt{\rho}\dot{u}_{t}\|_{L^{2}}^{2}+2\int_{\tau}^{t_{1}}t\|\sqrt{\rho}\dot{u}_{t}\|_{L^{2}}^{2}dt+C\int_{\tau}^{t_{1}}
t^{2}\|\sqrt{\rho}\dot{u}_{t}\|_{L^{2}}^{2}dt\\
&\ \ \ \
+C\int_{\tau}^{t_{1}}t^{2}(\|\dot{u}_{x}\|_{L^{2}}^{2}+\|\dot{u}_{xx}\|_{L^{2}}^{2}
+\||x|^{\frac{\alpha}{2}}\dot{u}_{x}\|_{L^{2}}^{2}+1)dt.
\end{aligned}
\end{equation}
Since $$t\|\sqrt{\rho}\dot{u}_{t}\|_{L^{2}}\in L^{2}(0,T).$$ Thus
there exists a subsequence $\tau_{k}$ such that
$$\tau_{k}\rightarrow0,\ \
\tau_{k}^{2}\|\sqrt{\rho}\dot{u}_{t}(\tau_{k})\|_{L^{2}}^{2}\rightarrow0,\
\ as\ \ k\rightarrow +\infty.$$ Taking $\tau=\tau_{k}$ in
(\ref{t2}), then letting $k\rightarrow +\infty$ and using the
Gronwall inequality, one gets that
$$t^{2}\int_{\R}\rho\dot{u}_{t}^{2}dx+\int_{0}^{T}t^{2}\int_{\R}\mu(\rho)\dot{u}_{xt}^{2}dxdt\leq
C(T).$$
The proof of the lemma is completed.
\end{proof}
\end{Lemma}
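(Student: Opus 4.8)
The plan is to run a weighted energy estimate on $\dot{u}_t$, one time-derivative higher than Lemma \ref{lemmac12}. First I would differentiate the transport--diffusion equation (\ref{c41}) for $\dot u$ with respect to $t$, producing
\begin{equation}\nonumber
\rho\dot{u}_{tt}+\rho_t\dot{u}_t+\rho_t u\dot{u}_x+\rho u_t\dot{u}_x+\rho u\dot{u}_{xt}-(\mu(\rho)\dot{u}_{xt})_x-((\rho^\beta)_t\dot{u}_x)_x=(\gamma(\rho^\gamma)_t u_x+\gamma\rho^\gamma u_{xt}-(1+\beta)(\rho^\beta)_t u_x^2-2\mu(\rho)u_x u_{xt}-2\beta\rho^\beta u_x u_{xt})_x.
\end{equation}
Multiplying by $\dot{u}_t$ and integrating over $\R$, the contributions $\rho\dot{u}_{tt}\dot{u}_t$, $\rho_t\dot{u}_t^2$ and the convective term $\rho u\dot{u}_{xt}\dot{u}_t$ combine, after one use of the continuity equation $(\ref{a1})_{1}$, into $\tfrac12\tfrac{d}{dt}\int_\R\rho\dot{u}_t^2\,dx$, while the diffusion term yields the good quantity $\int_\R\mu(\rho)\dot{u}_{xt}^2\,dx$ (using $\mu(\rho)\ge 1$). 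Since $\dot{u}_t$ is not controlled at $t=0$, I would carry the temporal weight $t^2$ throughout, mirroring the weight $t$ of Lemma \ref{lemmac12}.

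The heart of the argument is the term-by-term control of the right-hand side, where every factor carrying the top-order derivative $\dot{u}_{xt}$ must be absorbed into the dissipation by Young's inequality with a small $\varepsilon$. The single delicate term is $-\int_\R\rho_t u\dot{u}_x\dot{u}_t\,dx$: rather than estimate $\rho_t$ in $L^\infty$, I would substitute $\rho_t=-(\rho u)_x$ and integrate by parts, obtaining $-\int_\R\rho u\,(u_x\dot{u}_x\dot{u}_t+u\dot{u}_{xx}\dot{u}_t+u\dot{u}_x\dot{u}_{tx})\,dx$, which trades the time derivative of the density for the spatial second derivative $\dot{u}_{xx}$ whose $L^2$-norm (with the $t$ weight) is precisely what Lemma \ref{lemmac12} supplies. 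The remaining terms are more routine: $-\int_\R\rho u_t\dot{u}_x\dot{u}_t\,dx$ is handled using the control of $u_t$ from Lemma \ref{lemmac9} and the $L^\infty$ bound (\ref{95-1}); the terms $-\int_\R(\rho^\beta)_t\dot{u}_x\dot{u}_{xt}\,dx$, $-2\int_\R\rho u\dot{u}_{xt}\dot{u}_t\,dx$ and the explicit source terms are bounded with $\|u\|_{L^\infty}$ from (\ref{95-1}), the $L^\infty$ bounds on $(\rho^\gamma)_t,(\rho^\beta)_t$ from Lemma \ref{lemmac11}, and $\|u_{xt}\|_{L^2},\|u_x\|_{L^4}$ from Lemmas \ref{lemmac7}--\ref{lemmac8}. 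Collecting everything should give $\frac{d}{dt}\int_\R\rho\dot{u}_t^2\,dx+\int_\R\mu(\rho)\dot{u}_{xt}^2\,dx\le C\int_\R\rho\dot{u}_t^2\,dx+C(\|\dot{u}_x\|_{L^2}^2+\|\dot{u}_{xx}\|_{L^2}^2+\||x|^{\frac{\alpha}{2}}\dot{u}_x\|_{L^2}^2+1)$.

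The final step is to multiply this inequality by $t^2$, integrate over $[\tau,t_1]\subset[0,T]$, and close by Gronwall. The weight derivative contributes $2t\int_\R\rho\dot{u}_t^2\,dx$, whose time integral is finite because $\int_0^T t\|\sqrt{\rho}\dot{u}_t\|_{L^2}^2\,dt\le C(T)$ by Lemma \ref{lemmac12}; the source terms are finite since $t^2\le T\,t$ lets me bound $\int_0^T t^2\|\dot{u}_{xx}\|_{L^2}^2\,dt$ by $T\int_0^T t\|\dot{u}_{xx}\|_{L^2}^2\,dt\le C(T)$ (Lemma \ref{lemmac12}), while $\int_0^T\|\dot{u}_x\|_{L^2}^2\,dt$ and $\int_0^T\||x|^{\frac{\alpha}{2}}\dot{u}_x\|_{L^2}^2\,dt$ are controlled by Lemma \ref{lemmac9}.

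I expect the main obstacle to be the initial layer $t\to 0$. Because $\dot{u}_t(\cdot,0)$ is uncontrolled, I cannot set $\tau=0$ directly; instead I would exploit $t\|\sqrt{\rho}\dot{u}_t\|_{L^2}\in L^2(0,T)$ (again from Lemma \ref{lemmac12}) to extract a sequence $\tau_k\to 0$ with $\tau_k^2\|\sqrt{\rho}\dot{u}_t(\tau_k)\|_{L^2}^2\to 0$, so that the initial boundary contribution $\tau^2\|\sqrt{\rho}\dot{u}_t\|_{L^2}^2(\tau)$ vanishes in the limit. Passing $\tau=\tau_k\to 0$ and invoking Gronwall then yields $t^2\int_\R\rho\dot{u}_t^2\,dx+\int_0^T t^2\int_\R\mu(\rho)\dot{u}_{xt}^2\,dx\,dt\le C(T)$. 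The subtlety is that the whole scheme hinges on the precise $t$-weight built into Lemma \ref{lemmac12}: one power of $t$ less there would destroy the integrability needed to annihilate the initial term, and the bootstrap would fail.
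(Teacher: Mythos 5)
Your proposal follows the paper's own proof essentially step for step: the same time-differentiated equation, the same substitution $\rho_t=-(\rho u)_x$ with integration by parts to trade $\rho_t$ for $\dot{u}_{xx}$ in the delicate term, the same $t^2$ weight with the leftover $-2\int_{\R}\rho u\dot{u}_{xt}\dot{u}_t\,dx$ estimated on the right, and the same extraction of a subsequence $\tau_k\to 0$ with $\tau_k^2\|\sqrt{\rho}\dot{u}_t(\tau_k)\|_{L^2}^2\to 0$ from $t\|\sqrt{\rho}\dot{u}_t\|_{L^2}\in L^2(0,T)$ before closing by Gronwall. The argument is correct and matches the paper's.
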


\begin{Lemma}\label{lemmac14}
Let $(\rho,u)$  be the smooth solution to $(\ref{a1})-(\ref{a4})$.
Then for any $0\leq t\leq T, \
2<\alpha<1+\frac{2}{\sqrt[3]{1+\sqrt[3]{4}}}$, it holds
$$t\|\sqrt{\mu(\rho)}|x|^{\frac{\alpha}{4}}\dot{u}_{x}\|_{L^{2}}^{2}+\int_{0}^{T}t\|\sqrt{\rho}\dot{u}_{t}|x|^{\frac{\alpha}{4}}\|_{L^{2}}^{2}dt\leq C(T).$$
\begin{proof}
Multiplying on both sides of $(\ref{c41})$ by
$|x|^{\frac{\alpha}{2}}\dot{u}_{t}$, integrating the resulted
equation over $\R$, we have
\begin{equation}\label{c75}
\begin{aligned}
\frac{1}{2}&\frac{d}{dt}\int_{\R}\mu(\rho)\dot{u}_{x}^{2}|x|^{\frac{\alpha}{2}}dx+\int_{\R}\rho \dot{u}_{t}^{2}|x|^{\frac{\alpha}{2}}dx\\
&=\frac{d}{dt}\int_{\R}(\mu(\rho)u_{x}^{2}+\beta\rho^{\beta}u_{x}^{2}-\gamma\rho^{\gamma}u_{x})(|x|^{\frac{\alpha}{2}}\dot{u}_{x}
+\frac{\alpha}{2}|x|^{\frac{\alpha}{2}-2}x\dot{u})dx\\
&\ \
+\frac{1}{2}\int_{\R}(\rho^{\beta})_{t}\dot{u}_{x}^{2}|x|^{\frac{\alpha}{2}}dx-\frac{\alpha}{2}\int_{\R}\mu(\rho)\dot{u}_{x}|x|^{\frac{\alpha}{2}-2}x\dot{u}_{t}dx
-\int_{\R}\rho u\dot{u}_{x}|x|^{\frac{\alpha}{2}}\dot{u}_{t}dx\\
&-\int_{\R}((\rho^{\beta})_{t}u_{x}^{2}+2\mu(\rho)u_{x}u_{xt}+\beta(\rho^{\beta})_{t}u_{x}^{2}+2\beta\rho^{\beta}u_{x}u_{xt}-\gamma(\rho^{\gamma})_{t}u_{x}-\gamma\rho^{\gamma}u_{xt})\\
&\ \ \times (|x|^{\frac{\alpha}{2}}\dot{u}_{x}+\frac{\alpha}{2}|x|^{\frac{\alpha}{2}-2}x\dot{u})dx\\
\end{aligned}
\end{equation}
It follows from (\ref{c57}) that
\begin{equation}\label{c76}
\begin{aligned}
\frac{1}{2}\int_{\R}(\rho^{\beta})_{t}\dot{u}_{x}^{2}|x|^{\frac{\alpha}{2}}dx\leq
C\|(\rho^{\beta})_{t}\|_{L^{\infty}}\int_{\R}|x|^{\frac{\alpha}{2}}\dot{u}_{x}^{2}dx\leq
C\int_{\R}|x|^{\frac{\alpha}{2}}\dot{u}_{x}^{2}dx.
\end{aligned}
\end{equation}
Direct estimates lead to
\begin{equation}\label{c77}
\begin{aligned}
&-\frac{\alpha}{2}\int_{\R}\mu(\rho)\dot{u}_{x}|x|^{\frac{\alpha}{2}-2}x\dot{u}_{t}dx\\
&=-\frac{\alpha}{2}\frac{d}{dt}\int_{\R}\mu(\rho)\dot{u}_{x}|x|^{\frac{\alpha}{2}-2}x\dot{u}dx+\frac{\alpha}{2}\int_{\R}(\rho^{\beta})_{t}\dot{u}_{x}|x|^{\frac{\alpha}{2}-2}
x\dot{u}dx+\frac{\alpha}{2}\int_{\R}\mu(\rho)\dot{u}_{xt}|x|^{\frac{\alpha}{2}-2}x\dot{u}dx\\
&\leq
-\frac{\alpha}{2}\frac{d}{dt}\int_{\R}\mu(\rho)\dot{u}_{x}|x|^{\frac{\alpha}{2}-2}x\dot{u}dx+C\|(\rho^{\beta})_{t}\|_{L^{\infty}}\|\dot{u}_{x}\|_{L^{2}}\||x|^{\frac{\alpha}{2}-1}
\dot{u}\|_{L^{2}}+C\|\dot{u}_{xt}\|_{L^{2}}\||x|^{\frac{\alpha}{2}-1}\dot{u}\|_{L^{2}}\\
&\leq
-\frac{\alpha}{2}\frac{d}{dt}\int_{\R}\mu(\rho)\dot{u}_{x}|x|^{\frac{\alpha}{2}-2}x\dot{u}dx+C\||x|^{\frac{\alpha}{2}}\dot{u}_{x}\|_{L^{2}}
+C\||x|^{\frac{\alpha}{2}}\dot{u}_{x}\|_{L^{2}}\|\dot{u}_{xt}\|_{L^{2}}\\
&\leq
-\frac{\alpha}{2}\frac{d}{dt}\int_{\R}\mu(\rho)\dot{u}_{x}|x|^{\frac{\alpha}{2}-2}x\dot{u}dx+C(1+\|\dot{u}_{xt}\|_{L^{2}})\||x|^{\frac{\alpha}{2}}\dot{u}_{x}\|_{L^{2}},
\end{aligned}
\end{equation}
\begin{equation}\label{c78}
\begin{aligned}
-\int_{\R}\rho u\dot{u}_{x}|x|^{\frac{\alpha}{2}}\dot{u}_{t}dx&\leq C\int_{\R}(\sqrt{\rho}\dot{u}_{t}|x|^{\frac{\alpha}{4}})(\dot{u}_{x}|x|^{\frac{\alpha}{4}})\sqrt{\rho}udx\\
&\leq
C\|\sqrt{\rho}u\|_{L^{\infty}}\|\sqrt{\rho}\dot{u}_{t}|x|^{\frac{\alpha}{4}}\|_{L^{2}}\|\dot{u}_{x}|x|^{\frac{\alpha}{4}}\|_{L^{2}}\\
&\leq \varepsilon\int_{\R}\rho
\dot{u}_{t}^{2}|x|^{\frac{\alpha}{2}}dx+C\|\dot{u}_{x}|x|^{\frac{\alpha}{4}}\|_{L^{2}}^{2},
\end{aligned}
\end{equation}
and
\begin{equation}\label{c79}
\begin{aligned}
-&\int_{\R}\biggl((\rho^{\beta})_{t}u_{x}^{2}+2\mu(\rho)u_{x}u_{xt}+\beta(\rho^{\beta})_{t}u_{x}^{2}+2\beta\rho^{\beta}u_{x}u_{xt}-\gamma(\rho^{\gamma})_{t}u_{x}-\gamma\rho^{\gamma}u_{xt}\biggl)\\
&\ \ \ \ \biggl(|x|^{\frac{\alpha}{2}}\dot{u}_{x}+\frac{\alpha}{2}|x|^{\frac{\alpha}{2}-2}x\dot{u}\biggl)dx\\
&\leq C(\||x|^{\frac{\alpha}{2}}\dot{u}_{x}\|_{L^{2}}+\||x|^{\frac{\alpha}{2}-1}\dot{u}\|_{L^{2}})(\|(\rho^{\beta})_{t}\|_{L^{\infty}}\|u_{x}\|_{L^{4}}^{2}
+\|u_{x}\|_{L^{\infty}}\|u_{xt}\|_{L^{2}}\\
&\ \ \ +\|(\rho^{\beta})_{t}\|_{L^{\infty}}\|u_{x}\|_{L^{4}}^{2}+\|u_{x}\|_{L^{\infty}}\|u_{xt}\|_{L^{2}}+\|(\rho^{\gamma})_{t}\|_{L^{\infty}}\|u_{x}\|_{L^{2}}+\|\rho^{\gamma}\|_{L^{\infty}}\|u_{xt}\|_{L^{2}})\\
&\leq C\||x|^{\frac{\alpha}{2}}\dot{u}_{x}\|_{L^{2}}.
\end{aligned}
\end{equation}
Substituting $(\ref{c76})-(\ref{c79})$ into $(\ref{c75})$, we get
\begin{equation}\label{c80}
\begin{aligned}
\frac{d}{dt}&\int_{\R}\mu(\rho)\dot{u}_{x}^{2}|x|^{\frac{\alpha}{2}}dx+\int_{\R}\rho\dot{u}_{t}^{2}|x|^{\frac{\alpha}{2}}dx\\
&\leq \frac{d}{dt}\int_{\R}(\mu(\rho)u_{x}^{2}+\beta\rho^{\beta}u_{x}^{2}-\gamma\rho^{\gamma}u_{x})(|x|^{\frac{\alpha}{2}}\dot{u}_{x}
+\frac{\alpha}{2}|x|^{\frac{\alpha}{2}-2}x\dot{u})dx\\
&-\frac{\alpha}{2}\frac{d}{dt}\int_{\R}\mu(\rho)\dot{u}_{x}|x|^{\frac{\alpha}{2}-2}x\dot{u}dx+\int_{\R}|x|^{\frac{\alpha}{2}}\dot{u}_{x}^{2}dx
+C(1+\|\dot{u}_{xt}\|_{L^{2}})\||x|^{\frac{\alpha}{2}}\dot{u}_{x}\|_{L^{2}}\\
&\leq \frac{d}{dt}F(t)-\frac{d}{dt}H(t)
+C(1+\|\dot{u}_{xt}\|_{L^{2}}^{2}+\||x|^{\frac{\alpha}{2}}\dot{u}_{x}\|_{L^{2}}^{2}),
\end{aligned}
\end{equation}
where
$$F(t)=\int_{\R}(\mu(\rho)u_{x}^{2}+\beta\rho^{\beta}u_{x}^{2}-\gamma\rho^{\gamma}u_{x})(|x|^{\frac{\alpha}{2}}\dot{u}_{x}
+\frac{\alpha}{2}|x|^{\frac{\alpha}{2}-2}x\dot{u})dx,$$
$$H(t)=\frac{\alpha}{2}\int_{\R}\mu(\rho)\dot{u}_{x}|x|^{\frac{\alpha}{2}-2}x\dot{u}dx.$$
Multiplying the inequality $(\ref{c80})$ by $t$ and then integrating
the resulted inequality with respect to $t$ over $[\tau,t_{1}]$ with
both $\tau,t_{1}\in[0,T]$ give
\begin{equation}\label{c81}
\begin{aligned}
&t_{1}\|\sqrt{\mu(\rho)}|x|^{\frac{\alpha}{4}}\dot{u}_{x}\|_{L^{2}}^{2}(t_{1})+\int_{\tau}^{t_{1}}
t\|\sqrt{\rho}\dot{u}_{t}|x|^{\frac{\alpha}{4}}\|_{L^{2}}^{2}dt\\
&\leq
\tau\|\sqrt{\mu(\rho)}|x|^{\frac{\alpha}{4}}\dot{u}_{x}\|_{L^{2}}^{2}(\tau)+\int_{\tau}^{t_{1}}
\|\sqrt{\mu(\rho)}|x|^{\frac{\alpha}{4}}\dot{u}_{x}\|_{L^{2}}^{2}dt+t_{1}F(t_{1})-\tau
F(\tau)\\
&\ \ \ \ -\int_{\tau}^{t_{1}}F(t)dt-t_{1}H(t_{1})+\tau
H(\tau)+\int_{\tau}^{t_{1}}H(t)dt\\
&\ \ \ \
+C\int_{\tau}^{t_{1}}t(1+\|\dot{u}_{xt}\|_{L^{2}}^{2}+\||x|^{\frac{\alpha}{2}}\dot{u}_{x}\|_{L^{2}}^{2})dt.
\end{aligned}
\end{equation}
Note that
$$\int_{0}^{T}\|\sqrt{\mu(\rho)}\dot{u}_{x}|x|^{\frac{\alpha}{4}}\|_{L^{2}}^{2}dt\leq\int_{0}^{T}\|\sqrt{\mu(\rho)}|\dot{u}_{x}|(1+|x|^{\frac{\alpha}{2}})\|_{L^{2}}^{2}dt
\leq C(T),$$
\begin{equation}\nonumber
\begin{aligned}
\int_{0}^{T}F(t)dt&=\int_{0}^{T}\int_{\R}(\mu(\rho)u_{x}^{2}+\beta\rho^{\beta}u_{x}^{2}-\gamma\rho^{\gamma}u_{x})(|x|^{\frac{\alpha}{2}}\dot{u}_{x}
+\frac{\alpha}{2}|x|^{\frac{\alpha}{2}-2}x\dot{u})dxdt\\
&\leq
C\int_{0}^{T}\||x|^{\frac{\alpha}{2}}\dot{u}_{x}\|_{L^{2}}^{2}dt\leq
C(T),
\end{aligned}
\end{equation}
and
\begin{equation}\nonumber
\begin{aligned}
\int_{0}^{T}H(t)dt&=\int_{0}^{T}\int_{\R}\mu(\rho)\dot{u}_{x}|x|^{\frac{\alpha}{2}-2}x\dot{u}dxdt\leq C\int_{0}^{T}\int_{\R}|\dot{u}_{x}||x|^{\frac{\alpha}{2}-1}|\dot{u}|dxdt\\
&\leq
C\int_{0}^{T}\|\dot{u}_{x}\|_{L^{2}}\||x|^{\frac{\alpha}{2}-1}\dot{u}\|_{L^{2}}dt\\
&\leq
C\int_{0}^{T}(\|\dot{u}_{x}\|_{L^{2}}^{2}+\||x|^{\frac{\alpha}{2}}\dot{u}_{x}\|_{L^{2}}^{2})dt\leq
C(T).
\end{aligned}
\end{equation}
There exists a subsequence $\tau_{k}$ such that as $
k\rightarrow+\infty$,
$$\tau_{k}\rightarrow 0,\ \  \tau_{k}\biggl(\int_{\R}\mu(\rho)\dot{u}_{x}^{2}|x|^{\frac{\alpha}{2}}dx\biggl)(\tau_{k})\rightarrow0,\ \  \tau_{k}\biggl(\int_{\R}\mu(\rho)\dot{u}_{x}|x|^{\frac{\alpha}{2}-2}x\dot{u}dx\biggl)(\tau_{k})\rightarrow0,\ \
$$
and
$$\tau_{k}\biggl(\int_{\R}(\mu(\rho)u_{x}^{2}+\beta\rho^{\beta}u_{x}^{2}-\gamma\rho^{\gamma}u_{x})(|x|^{\frac{\alpha}{2}}\dot{u}_{x}
+\frac{\alpha}{2}|x|^{\frac{\alpha}{2}-2}x\dot{u})dx\biggl)(\tau_{k})\rightarrow0.$$
Taking $\tau=\tau_{k}$ in $(\ref{c81})$, then letting
$k\rightarrow+\infty$ and using the Cauchy inequality and Gronwall
inequality, one can obtain
$$t\|\sqrt{\mu(\rho)}|x|^{\frac{\alpha}{4}}\dot{u}_{x}\|_{L^{2}}^{2}+\int_{0}^{T}t\|\sqrt{\rho}\dot{u}_{t}|x|^{\frac{\alpha}{4}}\|_{L^{2}}^{2}dt\leq C(T).
$$
The proof of the lemma  is completed.
\end{proof}
\end{Lemma}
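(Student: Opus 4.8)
The plan is to run the weighted analogue of the energy argument used for Lemma~\ref{lemmac12}, carrying the spatial weight $|x|^{\alpha/2}$ through the whole estimate. Concretely, I would start from the evolution equation \eqref{c41} for the material derivative $\dot u$ and test it against $|x|^{\alpha/2}\dot u_t$. Integrating over $\R$ and integrating the viscous term by parts produces a weighted energy identity whose left-hand side is $\tfrac12\frac{d}{dt}\int_\R\mu(\rho)\dot u_x^2|x|^{\alpha/2}\,dx+\int_\R\rho\dot u_t^2|x|^{\alpha/2}\,dx$; the first quantity is exactly $\tfrac12\frac{d}{dt}\|\sqrt{\mu(\rho)}|x|^{\alpha/4}\dot u_x\|_{L^2}^2$ and the second is the weighted dissipation we wish to control. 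On the right-hand side the source $(\gamma p u_x-\mu(\rho)u_x^2-\beta\rho^\beta u_x^2)_x$ is arranged into a total time-derivative $\frac{d}{dt}F(t)$ plus remainders, while differentiating $\mu(\rho)$ in time yields an extra, harmless term $\tfrac12\int_\R(\rho^\beta)_t\dot u_x^2|x|^{\alpha/2}\,dx$ that is bounded by $C\int_\R|x|^{\alpha/2}\dot u_x^2\,dx$ using $\|(\rho^\beta)_t\|_{L^\infty}\le C$ from \eqref{c57}.

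The delicate terms are those in which the weight must be differentiated, since they carry the singular factor $|x|^{\alpha/2-2}x$. The worst one, $-\tfrac\alpha2\int_\R\mu(\rho)\dot u_x|x|^{\alpha/2-2}x\dot u_t\,dx$, pairs $\dot u_t$ against a singular weight and cannot be absorbed directly; I would integrate it by parts \emph{in time} to trade $\dot u_t$ for $\dot u$, producing a second total time-derivative $-\frac{d}{dt}H(t)$ together with terms in $(\rho^\beta)_t\dot u$ and $\dot u_{xt}\dot u$. These are closed by the Caffarelli--Kohn--Nirenberg/Hardy inequality of Lemma~\ref{lemmab2}, namely $\||x|^{\alpha/2-1}\dot u\|_{L^2}\le C\||x|^{\alpha/2}\dot u_x\|_{L^2}$, which is admissible precisely because the range \eqref{a6+} guarantees $\tfrac12+(\tfrac\alpha2-1)>0$ and keeps the best constant in range. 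The convection term $-\int_\R\rho u\dot u_x|x|^{\alpha/2}\dot u_t\,dx$ is handled by the uniform bound on $\|\sqrt\rho u\|_{L^\infty}$ (from the boundedness of $\rho$ together with \eqref{95-1} and Lemma~\ref{lemmac11}) followed by Young's inequality, so that an $\varepsilon$-fraction of $\int_\R\rho\dot u_t^2|x|^{\alpha/2}$ is absorbed on the left; the remaining source-derivative terms reduce to $C\||x|^{\alpha/2}\dot u_x\|_{L^2}$ via \eqref{c57} and Lemmas~\ref{lemmac9},~\ref{lemmac11}.

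Collecting everything gives a differential inequality of the shape $\frac{d}{dt}\|\sqrt{\mu(\rho)}|x|^{\alpha/4}\dot u_x\|_{L^2}^2+\int_\R\rho\dot u_t^2|x|^{\alpha/2}\,dx\le \frac{d}{dt}(F-H)+C(1+\|\dot u_{xt}\|_{L^2}^2+\||x|^{\alpha/2}\dot u_x\|_{L^2}^2)$. I would then multiply by $t$ and integrate over $[\tau,t_1]$. The contributions of $F$ and $H$ are controlled since $\int_0^T(|F|+|H|)\,dt\le C(T)$ and $\int_0^T\|\sqrt{\mu(\rho)}|x|^{\alpha/4}\dot u_x\|_{L^2}^2\,dt\le C(T)$, the latter following from Lemma~\ref{lemmac9} because $|x|^{\alpha/2}\le 1+|x|^{\alpha}$.

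The step I expect to be the main obstacle is the time-weighted term $\int_\tau^{t_1}t\,\|\dot u_{xt}\|_{L^2}^2\,dt$: the previous estimates only supply the $t^2$-weighted bound $\int_0^Tt^2\|\dot u_{xt}\|_{L^2}^2\,dt\le C(T)$ of Lemma~\ref{lemmac13}, not a $t$-weighted one. The way around this is \emph{not} to split the cross term $t\|\dot u_{xt}\|_{L^2}\||x|^{\alpha/2}\dot u_x\|_{L^2}$ by Young's inequality, but to keep it and apply Cauchy--Schwarz in time with the full factor $t$ placed on the $\dot u_{xt}$ slot, yielding $\big(\int_0^Tt^2\|\dot u_{xt}\|_{L^2}^2\,dt\big)^{1/2}\big(\int_0^T\||x|^{\alpha/2}\dot u_x\|_{L^2}^2\,dt\big)^{1/2}\le C(T)$ by Lemmas~\ref{lemmac13} and~\ref{lemmac9}. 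Finally, since no uniform-in-time control near $t=0$ is available, I would extract a sequence $\tau_k\to0$ along which the boundary quantities $\tau_k\|\sqrt{\mu(\rho)}|x|^{\alpha/4}\dot u_x\|_{L^2}^2(\tau_k)$, $\tau_kF(\tau_k)$ and $\tau_kH(\tau_k)$ all tend to zero, pass to the limit, and close by the Gronwall inequality to obtain the claimed bound.
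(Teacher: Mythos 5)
Your proposal follows essentially the same route as the paper: test \eqref{c41} against $|x|^{\alpha/2}\dot u_t$, pull the source into a total derivative $F(t)$, integrate the singular-weight viscous term by parts in time to produce $H(t)$, control the remaining terms via \eqref{c57}, the Caffarelli--Kohn--Nirenberg bound $\||x|^{\alpha/2-1}\dot u\|_{L^2}\le C\||x|^{\alpha/2}\dot u_x\|_{L^2}$ and the absorption of $\varepsilon\int_\R\rho\dot u_t^2|x|^{\alpha/2}dx$, then multiply by $t$, integrate over $[\tau,t_1]$, and pass to a subsequence $\tau_k\to0$ before applying Gronwall. The one place where you genuinely deviate is the cross term involving $\dot u_{xt}$: the paper applies Young's inequality pointwise in time in \eqref{c80} and is then left in \eqref{c81} with $\int_\tau^{t_1}t\,\|\dot u_{xt}\|_{L^2}^2\,dt$, which is \emph{not} directly supplied by the earlier lemmas (Lemma \ref{lemmac13} only gives the $t^2$-weighted bound $\int_0^T t^2\|\dot u_{xt}\|_{L^2}^2\,dt\le C(T)$, which does not control the $t$-weighted integral near $t=0$). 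Your alternative --- keeping the product $t\|\dot u_{xt}\|_{L^2}\||x|^{\alpha/2}\dot u_x\|_{L^2}$ and applying Cauchy--Schwarz in time with the full factor $t$ on the $\dot u_{xt}$ slot, so that Lemma \ref{lemmac13} and Lemma \ref{lemmac9} together give a finite bound --- closes this step cleanly and is in fact the more careful treatment; it buys you a proof that does not rely on an unavailable $t$-weighted estimate for $\dot u_{xt}$, at the cost only of carrying the cross term unsplit into the time integration. The rest of your argument (boundedness of $\|\sqrt\rho u\|_{L^\infty}$, the $L^1_t$ bounds on $F$ and $H$, and the extraction of $\tau_k$) matches the paper.
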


\begin{Lemma}\label{lemmac15}
Let $(\rho,u)$  be the smooth solution to $(\ref{a1})-(\ref{a4})$.
Then for any $0\leq t\leq T$, it holds
$$t\int_{\R}u_{xxx}^{2}dx\leq C(T).$$
\begin{proof}
Rewriting the equation $(\ref{a1})_{2}$ as
$$\rho\dot{u}+(\rho^{\gamma})_{x}=\mu(\rho)u_{xx}+(\rho^{\beta})_{x}u_{x}.$$
Differentiating the above equation with respect to $x$, we have
$$\mu(\rho)u_{xxx}=\rho_{x}\dot{u}+\rho\dot{u}_{x}+(\rho^{\gamma})_{xx}-
(\rho^{\beta})_{xx}u_{x}-2(\rho^{\beta})_{x}u_{xx}.$$
This implies
that
\begin{equation}\nonumber
\begin{aligned}
\|u_{xxx}\|_{L^{2}}&\leq C\|\rho_{x}\dot{u}+\rho\dot{u}_{x}+(\rho^{\gamma})_{xx}-(\rho^{\beta})_{xx}u_{x}-2(\rho^{\beta})_{x}u_{xx}\|_{L^{2}}\\
&\leq C(\|\dot{u}\|_{L^{\infty}}\|\rho_{x}\|_{L^{2}}+1)\leq C(\|\dot{u}\|_{L^{\infty}}+1)\\
&\leq C(\|\dot{u}\|_{L^{\frac{4}{\alpha-2}}}+\|\dot{u}_{x}\|_{L^{2}}+1)\\
&\leq C(\||x|^{\frac{\alpha}{4}}\dot{u}_{x}\|_{L^{2}}+1).
\end{aligned}
\end{equation}
Thanks to Lemma $\ref{lemmac14}$, it deduces
$$t\int_{\R}u_{xxx}^{2}dx\leq C(T).$$
The proof of the lemma  is completed.
\end{proof}
\end{Lemma}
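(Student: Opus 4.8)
The plan is to read off $u_{xxx}$ algebraically from the momentum equation, reduce the $L^2$-norm of the right-hand side to a bound on $\dot u$ alone, and then control $\dot u$ in $L^\infty$ through the \emph{time-weighted} estimates for $\dot u_x$ already in hand. First I would rewrite $(\ref{a1})_{2}$ as $\rho\dot u+(\rho^\gamma)_x=\mu(\rho)u_{xx}+(\rho^\beta)_x u_x$ and differentiate once in $x$; since $\mu(\rho)=1+\rho^\beta$, this gives
\[
\mu(\rho)u_{xxx}=\rho_x\dot u+\rho\dot u_x+(\rho^\gamma)_{xx}-(\rho^\beta)_{xx}u_x-2(\rho^\beta)_x u_{xx}.
\]
Because $\mu(\rho)\ge 1$ we have $\|u_{xxx}\|_{L^2}\le\|\mu(\rho)u_{xxx}\|_{L^2}$, so it suffices to estimate the right-hand side term by term. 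Using the uniform $H^2$-bounds on $\rho,\rho^\gamma,\rho^\beta$ from Lemma \ref{lemmac10} (which give $\|\rho_x\|_{L^2},\|(\rho^\gamma)_{xx}\|_{L^2},\|(\rho^\beta)_{xx}\|_{L^2}\le C(T)$ and, via $H^1\hookrightarrow L^\infty$, also $\|(\rho^\beta)_x\|_{L^\infty}\le C(T)$) together with $\|u_x\|_{L^\infty}+\|u_{xx}\|_{L^2}\le C(T)$ from Lemma \ref{lemmac8} and $\|\rho\|_{L^\infty}\le C$, every term not containing $\dot u$ is bounded by $C(T)$. This reduces the estimate to
\[
\|u_{xxx}\|_{L^2}\le C\bigl(\|\dot u\|_{L^\infty}\|\rho_x\|_{L^2}+\|\dot u_x\|_{L^2}+1\bigr)\le C\bigl(\|\dot u\|_{L^\infty}+\|\dot u_x\|_{L^2}+1\bigr).
\]

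The crux is bounding $\|\dot u\|_{L^\infty}$: there is no Poincar\'e inequality and no unweighted $L^p$ control of $\dot u$, so I would route the bound through the weighted estimate for $\dot u_x$ supplied by Lemma \ref{lemmac14}. By the one-dimensional Gagliardo--Nirenberg interpolation $\|\dot u\|_{L^\infty}\le C\|\dot u\|_{L^{4/(\alpha-2)}}^{1-\theta}\|\dot u_x\|_{L^2}^{\theta}\le C\bigl(\|\dot u\|_{L^{4/(\alpha-2)}}+\|\dot u_x\|_{L^2}\bigr)$, and by the best-constant Caffarelli--Kohn--Nirenberg inequality $(\ref{b3})$ applied with $a=\tfrac{\alpha}{4}$ and $b=0$, for which $p=\tfrac{2}{2a-1}=\tfrac{4}{\alpha-2}$,
\[
\|\dot u\|_{L^{4/(\alpha-2)}}\le C\bigl\||x|^{\frac{\alpha}{4}}\dot u_x\bigr\|_{L^2}.
\]
The admissibility requirements $a>\tfrac12$ and $a-1\le b\le a-\tfrac12$ become $2<\alpha\le 4$, which holds throughout the prescribed range $2<\alpha<1+\frac{2}{\sqrt[3]{1+\sqrt[3]{4}}}$ (this quantity being strictly less than $4$). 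Hence $\|u_{xxx}\|_{L^2}\le C\bigl(\||x|^{\alpha/4}\dot u_x\|_{L^2}+\|\dot u_x\|_{L^2}+1\bigr)$.

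Finally I would multiply by $t$ and square, obtaining
\[
t\|u_{xxx}\|_{L^2}^2\le C\bigl(t\||x|^{\frac{\alpha}{4}}\dot u_x\|_{L^2}^2+t\|\dot u_x\|_{L^2}^2+t\bigr).
\]
The first term is bounded by Lemma \ref{lemmac14} (using $\mu(\rho)\ge 1$), the second by Lemma \ref{lemmac12}, and the last by $t\le T$, which yields $t\int_\R u_{xxx}^2\,dx\le C(T)$. The main obstacle is precisely the $L^\infty$-bound on $\dot u$: one must choose the weight exponent in the CKN inequality exactly as $a=\alpha/4$ so that the resulting Lebesgue exponent coincides with the interpolation endpoint $L^{4/(\alpha-2)}$, and then check that this choice is admissible for every $\alpha$ in the allowed window. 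The factor $t$ is indispensable, since the weighted control of $\dot u_x$ from Lemmas \ref{lemmac12} and \ref{lemmac14} degenerates as $t\to 0^+$.
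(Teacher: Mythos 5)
Your proposal is correct and follows essentially the same route as the paper: differentiate the rewritten momentum equation, reduce to $\|\dot u\|_{L^\infty}$, interpolate through $L^{4/(\alpha-2)}$, and apply the best-constant Caffarelli--Kohn--Nirenberg inequality with $a=\alpha/4$, $b=0$ before multiplying by $t$ and invoking Lemma \ref{lemmac14}. You are in fact slightly more careful than the paper in explicitly invoking Lemma \ref{lemmac12} to handle the unweighted term $t\|\dot u_x\|_{L^2}^2$, which the paper silently absorbs.
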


\section{Proof of Theorem $\ref{theorem}$.}

In this section, we give the proof of our main result.

We first state the local existence and uniqueness of classical
solution when the initial  data may contain vacuum, the proof is
referred to \cite{LZ2012}, \cite{CK2006}.

\begin{Lemma}\label{lemmab1}
Under assumptions of Theorem $(\ref{theorem})$, there exists a $T_{*}>0$ and an unique classical solution $(\rho,u)$ to the Cauchy problem $(\ref{a1})-(\ref{a4})$ satisfying $(\ref{a7})$ with $T$ replaced by $T_{*}$.
\end{Lemma}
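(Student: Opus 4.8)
The plan is to establish local existence through a linearization-and-iteration scheme combined with a regularization that keeps the density away from vacuum, in the spirit of Cho-Kim \cite{CK2006} and \cite{LZ2012}. The crucial structural observation is that $\mu(\rho)=1+\rho^{\beta}\ge 1$, so the spatial viscous operator is uniformly elliptic even where $\rho$ vanishes; the only degeneracy is the factor $\rho$ multiplying $u_t$ in the momentum equation.

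First I would regularize the data, replacing $\rho_0$ by $\rho_0^{\delta}=\rho_0+\delta$ with $\delta>0$ small (and mollifying if necessary) so that $\rho_0^{\delta}$ is bounded below and \eqref{a5}-\eqref{a6} hold with a little extra regularity. Then I would set up the iteration: given $v=u^{k}$, solve the linear continuity equation $\rho_t+(\rho v)_x=0$ with $\rho|_{t=0}=\rho_0^{\delta}$ by characteristics, which yields $\rho^{k+1}$ retaining a positive lower bound and an upper bound controlled by $\|v_x\|_{L^{1}_tL^{\infty}_x}$, and afterwards solve the linear parabolic problem
\[
\rho^{k+1}u_t+\rho^{k+1}v\,u_x-(\mu(\rho^{k+1})u_x)_x=-(p(\rho^{k+1}))_x,\qquad u|_{t=0}=u_0,
\]
for $u^{k+1}$. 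Since $\mu(\rho^{k+1})\ge 1$, this is uniformly parabolic with bounded coefficients, and its solvability in the spaces of \eqref{a7} is classical.

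The heart of the matter is to derive a priori bounds for the iterates that are uniform in both $k$ and $\delta$ on a short interval $[0,T_*]$. I would differentiate the momentum equation in $t$ and test against $u_t$ and $\dot u$ to control $\int_{\R}\rho u_t^{2}\,dx$, following the computations of Lemmas \ref{lemmac7} and \ref{lemmac9}; here the compatibility condition \eqref{a6} is indispensable, since it identifies $\sqrt{\rho_0}\,\dot u(0)=g$ and gives a finite initial value for these higher-order energies. The weighted quantities $\sqrt{\rho}\,u(1+|x|^{\alpha/2})$, $u_x|x|^{\alpha/2}$ and so on must be propagated via the Caffarelli-Kohn-Nirenberg inequality (Lemma \ref{lemmab2}), exactly as in Lemmas \ref{lemmac6}, \ref{lemmac9} and \ref{lemmac14}, because on $\R$ with vacuum there is neither a Poincar\'e inequality nor any a priori $L^{p}$ control of $u$: the weights are precisely what allow $u$ to be estimated. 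With these $\delta$-uniform bounds, I would show the iteration map contracts in a lower-order weighted norm (for instance $L^{\infty}_tL^{2}_x$ for the differences $\rho^{k+1}-\rho^{k}$ and $u^{k+1}-u^{k}$), pass to a limit $(\rho^{\delta},u^{\delta})$ solving the $\delta$-regularized system, and finally let $\delta\to 0$ using the uniform estimates together with compactness and lower semicontinuity to recover a solution of \eqref{a1}-\eqref{a4} with vacuum. Uniqueness then follows by the same difference estimate applied to two solutions sharing the data, closed by Gronwall's inequality.

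The main obstacle I expect is closing the higher-order estimates uniformly in $\delta$ near the vacuum set: no bare power of $u$ or $u_t$ can be bounded in an unweighted $L^{p}$, so every term carrying an undifferentiated $u$ (such as $\rho u u_x$, or the convective contributions to $\dot u_t$) must be routed through the weighted norms and Lemma \ref{lemmab2}, with \eqref{a6} supplying the starting point of the hierarchy at $t=0$. Equally delicate is verifying that the limit attains the initial data in the strong topology demanded by \eqref{a7}---in particular the right-continuity at $t=0$ of $\sqrt{\rho}\,\dot u(1+|x|^{\alpha/2})$ in $L^{2}(\R)$.
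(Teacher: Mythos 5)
The paper does not actually prove this lemma: it is stated as a quotation of the local well-posedness results of \cite{CK2006} and \cite{LZ2012}, with the one-line remark ``the proof is referred to \cite{LZ2012}, \cite{CK2006}.'' Your sketch is therefore not competing with an argument in the paper but reconstructing the standard one from the cited references, and as a reconstruction it is essentially right: linearize, iterate, use the uniform ellipticity $\mu(\rho)\ge 1$ for the velocity equation, use the compatibility condition \eqref{a6} to start the $\sqrt{\rho}u_t$ hierarchy at $t=0$, contract in a low-order norm, and remove the regularization. One point deserves more care than you give it. On the whole line with $\rho_0\in L^1$ and the weighted conditions $|x|^{\alpha/2}\rho_0^{\gamma/2},|x|^{\alpha/2}\rho_0^{\beta/2}\in L^2$, the regularization $\rho_0^{\delta}=\rho_0+\delta$ destroys exactly the far-field structure your weighted estimates rely on: $\rho_0+\delta$ is not integrable and $|x|^{\alpha/2}(\rho_0+\delta)^{\gamma/2}\notin L^2$, so the quantities you propose to propagate ``uniformly in $\delta$'' are infinite at $t=0$ for every $\delta>0$. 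The usual repair (and the one implicit in \cite{CK2006}, \cite{LZ2012} and in the 2D Cauchy-problem papers \cite{JWX2013}, \cite{HL2012}) is to first solve an initial-boundary problem on bounded intervals $(-R,R)$ with data $\rho_0+\delta$, obtain estimates uniform in both $\delta$ and $R$ with the weights cut off or handled on the bounded domain, and then pass $R\to\infty$ before or together with $\delta\to 0$. Without that extra layer the scheme as you wrote it does not launch. Apart from this, your outline is consistent with what the cited local theory delivers, and correctly identifies the role of \eqref{a6} and of the Caffarelli--Kohn--Nirenberg weights in place of a Poincar\'e inequality.
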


With all the a priori estimates in Section 2 at hand, we are ready to prove the main results of this paper in this section.

\begin{proof}[Proof of Theorem $\ref{theorem}$]
We first show that $(\rho,u)$ is a classical solution to
$(\ref{a1})-(\ref{a4})$ if $(\rho,u)$ satisfies $(\ref{a7})$. Since
$u\in L^{2}(0,T;L^{\frac{2}{\alpha-1}}\cap D^{3}(\R))\ and\ u_{t}\in
L^{2}(0,T;L^{\frac{2}{\alpha-1}}\cap D^{1}(\R))$, the Sobolev's
embedding theorem implies that
$$u\in C([0,T];L^{\frac{2}{\alpha-1}}\cap D^{2}(\R))\hookrightarrow C([0,T]\times \R)$$
It follows from $(\rho,p(\rho))\in L^{\infty}(0,T;H^{2}(\R))$, and
$(\rho,P(\rho))_{t}\in L^{\infty}(0,T;H^{1}(\R))$ that
$(\rho,P(\rho))\in C([0,T];W^{1,q}(\R))\cap C([0,T];H^{2}(\R)-weak)$
with $2< q< +\infty$. This, together with $(\ref{a1})_{1}$ and
\cite{CK2006}, implies that
$$(\rho,P(\rho))\in C([0,T];H^{2}(\R))$$
Moreover, since for any $\tau\in (0,T)$
$$u\in L^{\infty}(\tau,T;L^{\frac{2}{\alpha-1}}\cap D^{3}(\R))\ ,\ u_{t}\in L^{\infty}(\tau,T;L^{\frac{2}{\alpha-1}}(\R)),$$
we have
$$u\in C([\tau,T];W^{3,q}(\R))\hookrightarrow C([\tau,T];C^{2,\zeta}(\R)),\ 0<\zeta<\frac{1}{2}\ for\ 1<q<2.$$
Note that
$$(\rho_x,(P(\rho))_x)\in C([0,T];H^{1}(\R))\hookrightarrow C([0,T]\times \R).$$
Using the continuity equation $(\ref{a1})_{1}$, one has
$$\rho_{t}=-(\rho u_{x}+u\rho_{x})\in C([\tau,T]\times \R).$$
Using the momentum equation $(\ref{a1})_{2}$, one has
\begin{equation*}
\begin{aligned}
(\rho u)_{t}&=(\mu(\rho)u_{x})_{x}-(\rho u^{2})_{x}-p_{x}\\
&=\mu(\rho)u_{xx}+(\rho^{\beta})_{x}u_{x}-\rho_{x}u^{2}-2\rho uu_{x}-(\rho^{\gamma})_{x}\\
&\in C([\tau,T]\times \R)
\end{aligned}
\end{equation*}

 Theorem $\ref{theorem}$ follows from Lemma $\ref{lemmab1}$ which is about
the local well-posedness of the classical solution and global (in
time) a priori estimates in Section 2. In fact, by Lemma
$\ref{lemmab1}$, there exists a local classical solution $(\rho,u)$
on the time interval $(0,T_{*}]$ with $T_{*}>0$. Now let $T^{*}$ be
the maximal existing time of the classical solution $(\rho,u)$ in
Lemma $\ref{lemmab1}$. Then obviously one has $T^{*}\geq T_{*}$. Now
we claim that $T^{*}\geq T$\ for any $T>0$ being any fixed positive
constant given in Theorem $\ref{theorem}$. Otherwise, if $T^{*}< T$,
then all the a priori estimates in Section 2 hold with $T$ being
replaced by $T^{*}$. In particular, from the Lemma $\ref{lemmac7},\
\ref{lemmac9}$, it holds that
$$(1+|x|^{\frac{\alpha}{2}})\sqrt{\rho}\dot{u}\in C([0,T^{*}];L^{2}(\R)).$$
Therefore, it follows from a  priori estimates in Section 2  that
$(\rho ,u)(x,T^{*})$ satisfy $(\ref{a7})$ and the compatibility
condition $(\ref{a6})$ at time $t=T^{*}$ with
$g(x)=\sqrt{\rho}\dot{u}(x,T^{*})$. By using lemma $\ref{lemmab1}$
again, there exists a $T^{*}_{1}>0$ such that the classical solution
$(\rho,u)$ in Lemma $\ref{lemmab1}$ exists on $(0,T^{*}+T^{*}_{1}]$,
which contradicts with $T^{*}$ being the maximal existing time of
the classical solution $(\rho,u)$. Thus it holds that $T^{*}\geq T$.
The proof of Theorem $\ref{theorem}$ is completed.
\end{proof}




\vspace{2mm}



\begin{thebibliography}{00}
\bibitem{CKN1984}
L. Caffarelli, R. Kohn, L. Nirenberg, First order interpolation inequality with weights,  {\it Compositio Mathematica}, {\bf 53}
(1984) 259-275.
\bibitem{CW2001}
F. Catrina, Z.Q. Wang, On the Caffarelli-Kohn-Nirenberg inequalities: Sharp constants, existence (and nonexistence) and symmetry of extremal functions,
{\it Comm.Pure Appl.Math.}, {\bf LIV}, (2001) 229-258 .
\bibitem{CK2006}
Y. Cho, H. Kim, On classical solutions of the compressible Navier-Stokes equations with nonnegative initial densities, {\it Manuscripta Math.} {\bf 120} (2006) 91-129
\bibitem{DWZ2011}
S.J. Ding, H.Y. Wen, C.J. Zhu, Global classical large solutions to 1D compressible Navier-Stokes equations with density-dependent viscosity and vacuum, {\it J.Differential.Equations.} {\bf 251} (2011) 1696-1725
\bibitem{FNP2001}
E. Feireisl, A. Novotn$\acute{y}$, H. Petzeltov$\acute{a}$, On the
existence of globally defined weak solutions to the Navier-Stokes
equations, {\it J.Math.Fluid Mech.} {\bf 3} (2001) 358-392
\bibitem{GJX2008}
Z.H. Guo, Q.S. Jiu, Z.P. Xin, Spherically symmetric isentropic
compressible flows with density-dependent viscosity coefficients,
{\it SIAM J. Math. Anal.} {\bf 39}(5) (2008) 1402-1427
\bibitem{Hoff1987}
D. Hoff, Global existence for 1D compressible isentropic Navier-Stokes equations with large initial data, {\it Trans. Amer. Math. Soc.}, {\bf 303}, {no. 1}, (1987) 169-181
\bibitem{hoff95}
D. Hoff, Global solutions of the Navier-Stokes equations for multidimensional compressible flow with discontinuous initial data, {\it J. Differ. Eqs. }, {\bf 120}, {no. 1},(1995) , 215-254
\bibitem{hs91}
D. Hoff, D. Serre, The Failure of Continuous Dependence on Initial data for the Navier-Stokes equations of Compressible Flow, {\it SIAM J. Math. Anal.} {\bf 51} (1991) 887-898
\bibitem{hl121}
X. D. Huang, J. Li, Existence and blowup behavior of global strong solutions to the two- dimensional baratropic compressible Navier-Stokes system with vacuum and large initial data,  {\it preprint, arxiv: 1205. 5342. }
\bibitem{HL2012}
X.D. Huang, J. Li, Global well-posedness of classical solutions to the Cauchy problem of two-dimensional baratropic compressible Navier-Stokes system with vacuum and large initial data, {\it preprint, arXiv: 1207. 3746} (2012)
\bibitem{js98}
S. Jiang, Global smooth solutions of the equations of a viscous, heat-conducting one-dimensional gas with density-dependent viscosity, {\it Math. Nachr. }{\bf 190} (1998) 169-183
\bibitem{JXZ2005}
S. Jiang, Z.P. Xin, P. Zhang, Global weak solutions to 1D compressible isentropic Navier-Stokes equations with density-dependent viscosity, {\it Methods Appl. Anal. }{\bf 12} (2005) 239-251
\bibitem{jwx1}
Q.S. Jiu, Y. Wang, Z.P. Xin, Stability of rarefaction waves to the
1D compressible Navier- Stokes equations with density-dependent
viscosity, {\it Comm. Part. Diff. Equ.}, 36, (2011) 602-634.
\bibitem{jwx2}
Q.S. Jiu, Y. Wang, Z. P. Xin, Vaccum behaviors around the rarefaction waves to the 1D compressible Navier-Stokes equations with density-dependent viscosity, {\it arxiv.org/abs/1109.0871}
\bibitem{JWX2013}
Q.S. Jiu, Y. Wang, Z.P. Xin, Global well-posedness of the Cauchy
problem of two-dimensional compressible Navier-Stokes equations in
weighted spaces, {\it J. Differential. Equations.} {\bf 255 no 3}
(2013) 351-404
\bibitem{JWX2012}
Q.S. Jiu, Y. Wang, Z.P. Xin, Global well-posedness of 2D compressible Navier-Stokes equations with large data and vacuum, {\it  arXiv:1202.1382 } (2012)
\bibitem{jwx121}
Q.S. Jiu, Y. Wang, Z.P. Xin, Global classical solutions to the two-dimensional compressible Navier-Stokes equations in $R^2$, {\it  arXiv:1209.0157 } (2012)
\bibitem{JX2008}
Q.S. Jiu, Z.P. Xin, The Cauchy problem for 1D compressible flows with density-dependent viscosity coefficients, {\it Kinet. Relat. Models} {\bf 1(2)} (2008) 313-330
\bibitem{KS1977}
A.V. Kazhikhov, V.V. Shelukhin, Unique global solution with respect to time of initial-boundary value problems for one-dimensional equations of a viscous gas, {\it J. Appl. Math. Mech} {\bf 41} (1977) 273-282
\bibitem{lxy98}
T.P. Liu, Z.P. Xin, T. Yang, Vacuum states of compressible flow, {\it Discrete Contin. Dyn. Syst.}, 4, 1-32 (1998).
\bibitem{LZ2012}
Z. Luo, Local existence of classical solutions to the two-dimensional viscous compressible flows with vacuum, {\it Commun. Math. Sci}, {\bf Vol.10}, (2012) 527-554
\bibitem{MV2008}
A. Mellet, A. Vasseur, Existence and uniqueness of global strong solutions for one-dimensional compressible Navier-Stokes equations, {\it SIAM J. Math. Anal.} {\bf 39(4)} (2008) 1344-1365
\bibitem{SZ2003}
I. Straskraba, A. Zlotnik, Global properties of solutions to 1D viscous compressible barotropic fluid equations with density dependent viscosity, {\it Z. Angew. Math. Phys.} {\bf 54(4)} (2003) 593-607
\bibitem{VK1995}
V.A. Vaigant, A.V. Kazhikhov, On the existence of global solutions of two-dimensional Navier-Stokes equations of a compressible viscous fluid, {\it Sibirsk. Mat. Zh.} {\bf 36(6)} (1995) 1283-1316
\bibitem{Xin1998}
Z.P. Xin, Blow-up of smooth solution to the compressible Navier-Stokes equations with compact density , {\it Comm. Pure Appl. Math.} {\bf 51} (1998) 229-240
\bibitem{XY2013}
Z.P. Xin, W. Yan, On blowup of classical solutions to the compressible Navier-Stokes equations, {\it Comm. Math. Phys.} (2013)
\bibitem{YYZ2001}
T. Yang, Z.A. Yao, C.J. Zhu, Compressible Navier-Stokes equations with density-dependent viscosity and vacuum, {\it Comm. Partial Differential Equations} {\bf 26(5-6)} (2001) 965-981
\bibitem{YZ2002}
T. Yang, C.J. Zhu, Compressible Navier-Stokes equations with degenerate viscosity coefficient and vacuum, {\it Comm. Math. Phys.}{\bf 230(2)} (2002) 329-363



\end{thebibliography}
\end{document}